\documentclass[10pt,reqno]{amsart}
\usepackage{latexsym,amssymb,amsmath}
\usepackage{amsmath,amsfonts}
\usepackage[hidelinks]{hyperref}
\usepackage{epsfig}
\usepackage{graphicx}
\usepackage{enumerate}

\usepackage{xcolor}
\usepackage{cancel}
\usepackage{scalerel,stackengine}
\stackMath
\newcommand\reallywidehat[1]{%
\savestack{\tmpbox}{\stretchto{%
  \scaleto{%
    \scalerel*[\widthof{\ensuremath{#1}}]{\kern-.6pt\bigwedge\kern-.6pt}%
    {\rule[-\textheight/2]{1ex}{\textheight}}
  }{\textheight}%
}{0.5ex}}%
\stackon[1pt]{#1}{\tmpbox}%
}

\theoremstyle{plain}
\newtheorem{theorem}{Theorem}[section]

\newtheorem{lemma}[theorem]{Lemma}

\newtheorem{proposition}[theorem]{Proposition}

\theoremstyle{remark}
\newtheorem{Remark}[theorem]{Remark}

\headheight=8pt
\topmargin=0pt
\textheight=624pt
\textwidth=462pt
\oddsidemargin=18pt
\evensidemargin=18pt

\hyphenation{di-men-sion-al}
\linespread{1.4}
\allowdisplaybreaks

\newcommand{\Hmm}[1]{\leavevmode{\marginpar{\tiny%
$\hbox to 0mm{\hspace*{-0.5mm}$\leftarrow$\hss}%
\vcenter{\vrule depth 0.1mm height 0.1mm width \the\marginparwidth}%
\hbox to 0mm{\hss$\rightarrow$\hspace*{-0.5mm}}$\\\relax\raggedright #1}}}


\begin{document}
 \title[Long-time behavior of SNLS with additive noise]{Scattering for Stochastic Nonlinear Schrödinger Equations with additive noise}

\author{Engin Ba\c{s}ako\u{g}lu}
\author{Faruk Temur}
\author{Bar{\i}\c{s} Ye\c{s}ilo\u{g}lu}
\author{O\u{g}uz Y{\i}lmaz}

\address{Institute of Mathematical Sciences, ShanghaiTech University, Shanghai, 201210, China}
\email{ebasakoglu@shanghaitech.edu.cn}
\address{Department of Mathematics,
Izmir Institute of Technology, 
Urla 35430, Izmir, Turkey}
\email{faruktemur@iyte.edu.tr}
 \address{Department of Mathematics,
Bo\u gazi\c ci University, 
Bebek 34342, Istanbul, Turkey}
\email{baris.yesiloglu@bogazici.edu.tr}
\address{Department of Mathematics,
Bo\u gazi\c ci University, 
Bebek 34342, Istanbul, Turkey}
\email{oguz.yilmaz@bogazici.edu.tr}

\subjclass[2010]{35Q55, 35P25, 60H15.}
\keywords{Stochastic nonlinear Schrödinger equation, additive noise, global well-posedness, scattering, Strichartz estimates.}

\begin{abstract}
    We study the scattering for the energy-subcritical stochastic nonlinear Schrödinger equation (SNLS) with additive noise. In particular, we examine the long-time behavior of solutions associated with the noise $\phi(x)g(t,\omega)dB(t,\omega)$ formed by a Schwartz function $\phi$, and an adapted process $g(t,\omega)$ satisfying certain decay. Essentially, the aim of the current paper is to prove almost sure scattering in the spaces $L^2$ and the pseudo-conformal space $\Sigma$ for an initial data in $\Sigma$; also in $H^1$ for an initial data in $H^1$.
\end{abstract}
\maketitle

\section{Introduction}

In this paper, we consider the stochastic nonlinear Schrödinger equation (SNLS) with additive noise 
    \begin{equation}\label{eq:additive_SNLS}
    \begin{cases}
    &idu - \Delta u dt + |u|^{2\sigma}udt = dW,\\
    &u|_{t=0}=u_{0},
    \end{cases}
    \quad (t,x)\in[0,\infty)\times\mathbb{R}^{n},
\end{equation}
where $n\geq 1$, $\sigma\in\mathbb{R}_+$, and $u=u(t,x,\omega)$ is a complex function-valued stochastic process. The noise term in \eqref{eq:additive_SNLS} is defined by $dW=\phi(x) g(t,\omega)dB(t,\omega)$, where $\phi$ is a Schwartz-in-space function, $g$ is a real-valued predictable process with $g\in L^{2}_{t}([0,\infty))$ $\mathbb{P}$-a.s., and $B(t,\omega)$ is a real-valued Brownian motion on a probability space $(\Omega,\{\mathcal{F}_{t}:t\geq 0\},\mathbb{P})$ with right-continuous filtration. Our essential focus will be the long-time behavior of the global solutions of \eqref{eq:additive_SNLS} with initial data coming from pseudo-conformal space $\Sigma$ or $H^{1}$. Before referring to some related literature, we shall state our main results next. The first of these corresponds to the global well-posedness of the IVP \eqref{eq:additive_SNLS} in $H^{1}$ and $\Sigma$:
\begin{theorem}\label{thm:result_well-posedness}
    Let $0<2\sigma<\frac{4}{n-2}$ if $n\geq 3$, $0<2\sigma<\infty$ if $n=1,2$. Assume that $g\in L^{\infty}_{\omega,t}(\Omega\times [0,T])$ for any $0<T<\infty$. Then, for $u_{0}\in H^{1}_{x}(\mathbb{R}^{n})$, there exists a unique solution $u$ with $u|_{t=0}=u_{0}$ to \eqref{eq:additive_SNLS} such that for any Strichartz admissible pair $(p,q)$ we have
    \begin{equation*}
        u\in C_{t}H^{1}_{x}([0,\infty)\times\mathbb{R}^{n})\cap L^{q}_{t}W^{1,p}_{x}([0,\infty)\times\mathbb{R}^{n}),\quad a.s.
    \end{equation*}
    In addition, when $u_{0}\in\Sigma$, for any $0<T<\infty$ and any Strichartz admissible pair $(p,q)$, we have
    \begin{equation*}
        xu\in C_{t}L^2_x([0,T]\times\mathbb{R}^{n})\cap L^{q}_{t}L^{p}_{x}([0,T]\times\mathbb{R}^{n}),\quad a.s.
    \end{equation*}
\end{theorem}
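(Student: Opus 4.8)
The plan is to remove the noise by subtracting the stochastic convolution and then to treat the resulting equation pathwise. Define
\[
Z(t) = -i\int_0^t e^{i(t-s)\Delta}\phi\, g(s)\,dB(s),
\]
so that the mild formulation of \eqref{eq:additive_SNLS} becomes equivalent to the random partial differential equation satisfied by $v := u - Z$, namely $i\partial_t v - \Delta v + |v+Z|^{2\sigma}(v+Z) = 0$ with $v|_{t=0}=u_0$, whose Duhamel form reads
\[
v(t) = e^{it\Delta}u_0 + i\int_0^t e^{i(t-s)\Delta}\,|v+Z|^{2\sigma}(v+Z)\,ds.
\]
For each fixed $\omega$ this is a deterministic forced nonlinear Schrödinger equation, so all the stochastic content is encoded in the regularity of $Z$.

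First I would establish the almost sure regularity of $Z$. Combining the Burkholder--Davis--Gundy inequality with the Strichartz estimates, and using that $\phi$ is Schwartz and that $g\in L^{\infty}_{\omega,t}(\Omega\times[0,T])$, one obtains $Z\in C_tH^1_x\cap L^q_tW^{1,p}_x$ almost surely for every admissible pair $(p,q)$. For the $\Sigma$ conclusions I would apply the Galilei operator $J(t)=x+2it\nabla=e^{it\Delta}\,x\,e^{-it\Delta}$, which commutes with the free evolution via $J(t)e^{i(t-s)\Delta}=e^{i(t-s)\Delta}J(s)$; since $J(s)\phi=(x+2is\nabla)\phi$ is Schwartz with norm growing at most linearly in $s$, the same BDG/Strichartz argument yields $J(t)Z\in C_tL^2_x\cap L^q_tL^p_x$ on every $[0,T]$. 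This linear-in-$s$ growth is precisely why the weighted statements are confined to finite intervals rather than to $[0,\infty)$.

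Next I would prove pathwise local well-posedness for $v$. Treating $Z$ as a fixed smooth forcing, I would run a contraction-mapping argument for the Duhamel map in $C_tH^1_x\cap L^q_tW^{1,p}_x$ on a short interval. Since $0<2\sigma<\frac{4}{n-2}$ the nonlinearity is energy-subcritical, so the standard fractional-Leibniz and Hölder estimates bound $|v+Z|^{2\sigma}(v+Z)$ in the dual Strichartz norm by $\norm{v}^{2\sigma+1}$ plus terms involving the already-controlled norms of $Z$; the contraction then closes on an interval whose length depends only on $\norm{u_0}_{H^1}$ and on the size of $Z$ there. The weighted $\Sigma$ bounds are propagated by commuting $J(t)$ through Duhamel, exploiting that $J(t)$ obeys the same product rule as $\nabla$.

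The main obstacle is global existence, which requires an almost sure a priori $H^1$ bound ruling out finite-time blow-up. Because the equation is defocusing (the conjugate $\bar u$ solves the standard defocusing NLS), I would exploit its coercive structure: testing the equation for $v$ against $\bar v$ and against $\partial_t\bar v$, and taking the appropriate real and imaginary parts, yields differential inequalities for the mass $\norm{v(t)}_{L^2}^2$ and the energy $\tfrac12\norm{\nabla v}_{L^2}^2+\tfrac{1}{2\sigma+2}\norm{v}_{L^{2\sigma+2}}^{2\sigma+2}$ in which every forcing contribution is dominated by the already-established space-time norms of $Z$. A Gronwall argument then bounds $\sup_{[0,T]}\norm{v(t)}_{H^1}$ by a finite almost sure constant depending on $T$, $\norm{u_0}_{H^1}$, and the path of $Z$; since $u=v+Z$ and $Z\in C_tH^1_x$, the same bound holds for $u$, and iterating the local theory produces the global solution on $[0,\infty)$. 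Uniqueness follows from the contraction estimate applied to the difference of two solutions, and the $\Sigma$ regularity is recovered on each $[0,T]$ by feeding the global $H^1$ bound into the weighted estimates of the previous step.
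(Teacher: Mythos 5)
Much of your outline coincides with what the paper actually does: the decomposition $u=Z+v$, the Burkholder--Davis--Gundy plus Strichartz regularity for the stochastic convolution, the pathwise fixed-point argument for the $v$-equation, and the propagation of the weighted bounds through the commutation $x_jS(t)=S(t)(x_j+2it\partial_{x_j})$ are exactly the paper's route (the $H^1_x$ theory is deferred to \cite{deBouard_H1}, and the $\Sigma$ statement is Proposition \ref{prop_Strichartz_estimate_for_sigma_space}). The genuine gap is your global-existence step: the claim that testing the $v$-equation against $\bar v$ and $\partial_t\bar v$ yields differential inequalities for the mass and energy of $v$ in which ``every forcing contribution is dominated by the already-established space-time norms of $Z$'' is not correct, and no purely pathwise Gronwall argument of this form closes.

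To see the obstruction, take your energy $E(v)=\frac12\Vert\nabla v\Vert_{L^2_x}^2+\frac{1}{2\sigma+2}\Vert v\Vert_{L^{2\sigma+2}_x}^{2\sigma+2}$ and substitute $\partial_t v$ from the equation. Since the nonlinearity in the equation is $F(v+Z)=\vert v+Z\vert^{2\sigma}(v+Z)$ while the nonlinearity inside $E$ is $F(v)$, the exact cancellation enjoyed by the deterministic NLS is destroyed and one is left with
\begin{equation*}
\frac{d}{dt}E(v)=\Im\int_{\mathbb{R}^n}\bigl(F(v+Z)-F(v)\bigr)\,\overline{\bigl(\Delta v-\vert v\vert^{2\sigma}v\bigr)}\,dx .
\end{equation*}
After the unavoidable integration by parts, the right-hand side contains terms of the form $\int\vert Z\vert\,\vert v\vert^{2\sigma-1}\vert\nabla v\vert^2\,dx$ (for $2\sigma\geq 1$), which cannot be bounded by any power of $E(v)$ times norms of $Z$: H\"older forces either $\Vert v\Vert_{L^\infty_x}$ or $\Vert\nabla v\Vert_{L^p_x}$ with $p>2$, neither of which is controlled by $H^1_x$ when $n\geq 2$. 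The natural repair --- putting $\Vert v+Z\Vert_{L^{2\sigma+2}_x}^{2\sigma+2}$ into the energy so that the main nonlinear terms cancel --- generates a term containing $\partial_t Z$, which does not exist pathwise (Brownian paths are nowhere differentiable), so one is forced back into stochastic calculus. This is precisely why the paper, following \cite{deBouard_H1}, runs the a priori bound at the level of $u$ itself: It\^o's formula applied to $M(u)+H(u)$ (Proposition \ref{proposition:boundedness_of_mass_and_hamiltonian}) produces only noise contributions, each bounded by $C(\phi,g)\bigl(1+M(u)+H(u)\bigr)$, so Burkholder--Davis--Gundy together with the stochastic Gronwall inequality (Lemma \ref{gronwall}) closes the estimate \emph{linearly} in the conserved quantities. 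Your pathwise inequality is at best superlinear in $E(v)$; that suffices for local control but cannot rule out finite-time blow-up, hence it does not prove the global statement.
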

 The next theorem concerns the almost sure scattering in the short-range case.
\begin{theorem}\label{thm:result_L^2_scattering}
    Let $u_{0}\in\Sigma$, $\frac{2}{n}<2\sigma<\frac{4}{n}$, and $\vert g(t)\vert=o(t^{-5/2})$, a.s., as $t\to\infty$. Then the global $L^{2}_{x}$ solution $u$ of \eqref{eq:additive_SNLS} scatters forward in time, i.e., almost surely there exists $u_{+}\in L^{2}_{x}(\mathbb{R}^{n})$ such that
    \begin{equation*}
        \lim_{t\to\infty}\Vert S(-t)u(t)-u_{+}\Vert_{L^{2}_{x}}=0.
    \end{equation*}
\end{theorem}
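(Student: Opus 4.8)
The plan is to work in the Duhamel (mild) formulation and exhibit the scattering state as an $L^2_x$-limit. Write $F(u)=|u|^{2\sigma}u$ and recall that, the noise being additive, the solution obeys
\begin{equation*}
S(-t)u(t)=u_0-i\int_0^t S(-s)F(u(s))\,ds-i\int_0^t S(-s)\phi\,g(s)\,dB(s).
\end{equation*}
Since $S(-t)$ is unitary on $L^2_x$, the convergence $\|S(-t)u(t)-u_+\|_{L^2_x}\to 0$ is equivalent to the a.s.\ convergence in $L^2_x$ of the two integrals on the right as $t\to\infty$, the limit defining
\begin{equation*}
u_+=u_0-i\int_0^\infty S(-s)F(u(s))\,ds-i\int_0^\infty S(-s)\phi\,g(s)\,dB(s).
\end{equation*}
Throughout I would exploit the pseudoconformal vector field $J(t):=S(t)\,x\,S(-t)=x+2it\nabla$, which is an isometric conjugate of multiplication by $x$ (so $\|J(t)f\|_{L^2_x}=\|xS(-t)f\|_{L^2_x}$), obeys the pointwise bound $|J(t)F(u)|\lesssim|u|^{2\sigma}|J(t)u|$ for the gauge-invariant nonlinearity, and supplies the dispersive decay estimate
\begin{equation*}
\|f\|_{L^r_x}\lesssim t^{-n\left(\frac12-\frac1r\right)}\,\|f\|_{L^2_x}^{1-\theta}\,\|J(t)f\|_{L^2_x}^{\theta},\qquad \theta=n\left(\tfrac12-\tfrac1r\right),
\end{equation*}
valid for $r$ in the admissible range. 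By Theorem \ref{thm:result_well-posedness} the solution exists globally and $\sup_{t}\|u(t)\|_{L^2_x}<\infty$ a.s.

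The stochastic convolution is the benign term. As $S(-s)$ is unitary, $M(t):=\int_0^t S(-s)\phi\,g(s)\,dB(s)$ is an $L^2_x$-valued martingale with quadratic variation
\begin{equation*}
\langle M\rangle_t=\int_0^t\|S(-s)\phi\|_{L^2_x}^2\,g(s)^2\,ds=\|\phi\|_{L^2_x}^2\int_0^t g(s)^2\,ds.
\end{equation*}
Because $g\in L^2_t([0,\infty))$ a.s., the bracket has an a.s.\ finite limit, so the martingale convergence theorem yields the a.s.\ convergence of $M(t)$ in $L^2_x$ together with $\sup_t\|M(t)\|_{L^2_x}<\infty$ a.s. This settles the second integral.

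The substance of the proof is the first integral, whose convergence I would obtain from dispersive decay of $u$, which in turn hinges on an a.s.\ uniform-in-time bound for $\|J(t)u(t)\|_{L^2_x}$ — a quantity that, unlike its finite-time analogue in Theorem \ref{thm:result_well-posedness}, is not a priori globally controlled. Applying $x$ to the identity for $S(-t)u(t)$ and using $xS(-s)=S(-s)J(s)$ produces an integral equation for $J(t)u(t)=S(t)\big(xS(-t)u(t)\big)$ in which the nonlinear forcing is dominated, via the pointwise bound above, Hölder, and Strichartz estimates, by a quantity of the schematic form $\int_0^t\|u(s)\|_{L^{r_0}_x}^{2\sigma}\,\|J(s)u(s)\|_{L^2_x}\,ds$ for a suitable $r_0$, while the stochastic forcing becomes the weighted convolution $\int_0^t S(-s)\,J(s)\phi\,g(s)\,dB(s)$. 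Since $\phi$ is Schwartz, $\|J(s)\phi\|_{L^2_x}\lesssim 1+s$, so this convolution is an $L^2_x$-martingale with bracket $\lesssim\int_0^t(1+s)^2 g(s)^2\,ds$; here the hypothesis $|g(s)|=o(s^{-5/2})$ is decisive, rendering the weighted integrand integrable (indeed $s^2g(s)^2=o(s^{-3})$, the extra margin absorbing the time weights incurred in the bootstrap), so that this martingale too converges a.s.\ and stays a.s.\ bounded. Inserting the decay of $\|u(s)\|_{L^{r_0}_x}^{2\sigma}$ into the nonlinear term and running a continuity/Gronwall bootstrap — the nonlinear contribution being small because its integrand is time-integrable — then closes the estimate and gives $\sup_t\|J(t)u(t)\|_{L^2_x}<\infty$ a.s.

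With $\|u(t)\|_{L^2_x}$ and $\|J(t)u(t)\|_{L^2_x}$ a.s.\ bounded, the dispersive estimate with $r=4\sigma+2$ gives $\|u(t)\|_{L^{4\sigma+2}_x}\lesssim t^{-\frac{n\sigma}{2\sigma+1}}$, whence
\begin{equation*}
\|F(u(t))\|_{L^2_x}=\|u(t)\|_{L^{4\sigma+2}_x}^{2\sigma+1}\lesssim t^{-n\sigma}.
\end{equation*}
The short-range hypothesis $2\sigma>\frac2n$, equivalently $n\sigma>1$, makes $t\mapsto\|F(u(t))\|_{L^2_x}$ integrable on $[1,\infty)$, so $\int_0^t S(-s)F(u(s))\,ds$ converges absolutely in $L^2_x$; together with the convergence of $M(t)$ this produces $u_+$ and proves the theorem. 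I expect the main obstacle to be exactly the third step: whereas in the deterministic setting the global control of $\|J(t)u\|_{L^2}$ follows from (pseudo)conformal identities, here one must absorb the noise's contribution to this weighted quantity, and the linear-in-$s$ growth of the weighted forcing $\|J(s)\phi\|_{L^2}$ is precisely what forces a decay rate on $g$; securing an almost-sure — rather than merely $L^2(\Omega)$ — bound, uniform in $t$, through the a.s.\ convergence of the relevant $L^2_x$-valued martingales, is the delicate point.
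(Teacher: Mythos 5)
Your treatment of the stochastic term is fine (and in fact cleaner than the paper's: you keep the adapted martingale $\int_0^t S(-s)\phi\,g\,dB$ and invoke martingale convergence on the event $\{\int_0^\infty g^2\,ds<\infty\}$, whereas the paper works pathwise with the non-adapted tail $z_*$). The genuine gap is your third step: the claim that one can close a Gronwall/continuity bootstrap yielding $\sup_{t}\Vert J(t)u(t)\Vert_{L^2_x}<\infty$ a.s. in the range $\frac{2}{n}<2\sigma<\frac{4}{n}$. This uniform bound is not available there, even for the deterministic equation. The pseudo-conformal energy genuinely grows in the mass-subcritical regime — this is exactly the content of the paper's estimate \eqref{eq:growth_of_psedo_energy_mass_sub-critical}, $\mathbb{E}[\sup_{0\le t\le\tau}(C(\phi)+E(u(t)))]\lesssim(1+\tau)^{2-n\sigma}$, and deterministically the pseudo-conformal identity only gives $\Vert J(t)u\Vert_{L^2}^2\lesssim t^{2-n\sigma}$ — so no bootstrap can terminate at a constant. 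Concretely, your bootstrap cannot close: with only $\Vert J(s)u\Vert_{L^2}$ under control, Hölder in the forcing term $\Vert\,|u|^{2\sigma}J(s)u\,\Vert_{L^2}$ forces the factor $|u|^{2\sigma}$ into $L^\infty_x$, which the dispersive/Gagliardo--Nirenberg estimate built on $J$ does not reach (it caps at $L^{2n/(n-2)}_x$); running the argument correctly therefore requires Strichartz norms of $Ju$, and the resulting time-integrability requirement is precisely the paper's condition \eqref{eq:derivation_Strauss_exp}, i.e. $2\sigma>\sigma(n)$ with $\sigma(n)$ the Strauss exponent \eqref{eq:Strauss_exponent}. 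Since $\sigma(n)>\frac{2}{n}$ for every $n$, the window $\frac{2}{n}<2\sigma\le\sigma(n)$ (nonempty, e.g. $1<2\sigma\le\sqrt{2}$ for $n=2$) is left uncovered; if your scheme worked there it would in particular settle the deterministic problem of uniform $\Sigma$-type bounds below the Strauss exponent, which is open.

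The paper's proof is structured specifically to avoid needing boundedness of $\Vert J(t)u\Vert_{L^2}$. It applies the pseudo-conformal transformation, converts the growth bound \eqref{eq:growth_of_psedo_energy_mass_sub-critical} into $\Vert\nabla\widetilde{u}(t)\Vert_{L^2_x}\lesssim(1-t)^{\frac{n\sigma}{2}-1}$ and $\Vert\widetilde{u}(t)\Vert_{L^{2\sigma+2}_x}\lesssim 1$ on $[0,1)$, and then proves that $\widetilde{u}_*(t)$ is Cauchy in $L^2_x$ as $t\to 1^-$ by a weak-convergence-then-strong-convergence argument whose only requirement is the integrability of $(1-t)^{\frac{n\sigma}{2}-1}$ and $(1-t)^{n\sigma-2}$ over $(0,1)$ — that is, exactly $n\sigma>1$, the short-range condition. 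The scattering state for $u_*$ is then recovered through the commutation identities for $S(-t)$ with dilations and quadratic phase multipliers. In short: your architecture (Duhamel plus $\Vert F(u(t))\Vert_{L^2}\lesssim t^{-n\sigma}$) would be valid if the uniform $J$-bound held, but that bound is the hard (and in this range unavailable) statement; the accessible substitute is a growth rate, and accommodating growth rather than boundedness is precisely what the pseudo-conformal change of variables accomplishes.
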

\begin{theorem}\label{thm:result_Sigma_scattering}
    Let $u_{0}\in\Sigma$, $\sigma(n)<2\sigma<\frac{4}{n-2}$ if $n\geq 3$, $\sigma(n)<2\sigma<\infty$ with $2\sigma\neq\frac{4}{n}$ if $n=1,2$, in which $\sigma(n)$ denotes the Strauss exponent  \begin{equation}\label{eq:Strauss_exponent}
        \sigma(n)=\frac{2-n+\sqrt{n^2+12n+4}}{2n}.
    \end{equation}
    Assume that $\vert g(t)\vert=o(t^{-5/2})$, a.s., as $t\to\infty$. Then the global $\Sigma$ solution $u$ of \eqref{eq:additive_SNLS} scatters forward in time, i.e., almost surely there exists $u_{+}\in \Sigma$ such that
    \begin{equation*}
        \lim_{t\to\infty}\Vert S(-t)u(t)-u_{+}\Vert_{\Sigma}=0.
    \end{equation*}
    Moreover, with an additional small data assumption \eqref{eq:bound_of_data_for_induction}, the same result holds for the $2\sigma = \frac{4}{n}$, $n=1,2$ case.
\end{theorem}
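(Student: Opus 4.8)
The plan is to reduce the scattering statement to the boundedness of a pseudo-conformal (virial) functional, and then to two time-integrability estimates: one deterministic and nonlinear, one stochastic and linear. Throughout I work pathwise, using the global $\Sigma$ solution supplied by Theorem~\ref{thm:result_well-posedness}. The central object is the Galilei operator $\Gamma(t)=x+2it\nabla=S(t)\,x\,S(-t)$, which satisfies the intertwining relation $\Gamma(t)S(t-s)=S(t-s)\Gamma(s)$ and the isometry $\|x\,S(-t)u(t)\|_{L^2_x}=\|\Gamma(t)u(t)\|_{L^2_x}$. Because of the latter, controlling the weighted part of the $\Sigma$-norm of $S(-t)u(t)$ is the same as controlling $\|\Gamma(t)u(t)\|_{L^2_x}$, while the $H^1$ part is handled by commuting $\nabla$ (which commutes with $S$) through the Duhamel formula. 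Applying $\Gamma(t)$ to the mild formulation and using the intertwining relation, I would write
\begin{equation*}
S(-t)\Gamma(t)u(t)=x u_0 - i\int_0^t S(-s)\,\Gamma(s)\bigl(|u|^{2\sigma}u\bigr)(s)\,ds - i\int_0^t S(-s)\,\Gamma(s)\phi\; g(s)\,dB(s),
\end{equation*}
so that scattering in $\Sigma$ amounts to showing that the deterministic and stochastic integrals on the right converge in $L^2_x$ (together with their $\nabla$-analogues) as $t\to\infty$.

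First I would establish the a priori decay of the solution. Introducing the pseudo-conformal functional
\begin{equation*}
\mathcal{E}(t)=\tfrac12\|\Gamma(t)u(t)\|_{L^2_x}^2+\tfrac{2t^2}{\sigma+1}\|u(t)\|_{L^{2\sigma+2}_x}^{2\sigma+2},
\end{equation*}
I would apply Itô's formula. The drift reproduces the classical virial identity, giving monotonicity in the mass-supercritical case and at worst a controlled polynomial bound $\mathcal{E}(t)\lesssim t^{2-n\sigma}$ in the subcritical case, exactly as in the deterministic theory; the noise contributes an Itô-correction term proportional to $g(t)^2\|\Gamma(t)\phi\|_{L^2_x}^2$ and a martingale term. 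Since $\|\Gamma(t)\phi\|_{L^2_x}\lesssim_{\phi}(1+t)$ while $g(t)^2=o(t^{-5})$, the quantity $t^2 g(t)^2$ is integrable, so the correction adds only a finite amount and does not disturb the deterministic bound; a pathwise control of the martingale part follows from the Burkholder–Davis–Gundy and Doob inequalities applied on dyadic time blocks together with Borel–Cantelli. In all cases this yields the pseudo-conformal decay $\|u(t)\|_{L^{2\sigma+2}_x}\lesssim t^{-\kappa}$ for some $\kappa=\kappa(n,\sigma)>0$, and, via Gagliardo–Nirenberg expressed through $\Gamma(t)$, a corresponding decay of $\|u(t)\|_{L^r_x}$ for the relevant exponents $r$.

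With this decay in hand I would close the two convergence statements. For the nonlinear integral, the gauge invariance of the nonlinearity gives the pointwise bound $|\Gamma(|u|^{2\sigma}u)|\lesssim |u|^{2\sigma}|\Gamma u|$, so by the dual Strichartz inequality and Hölder,
\begin{equation*}
\Big\|\int_{t_1}^{t_2}S(-s)\,\Gamma(s)\bigl(|u|^{2\sigma}u\bigr)\,ds\Big\|_{L^2_x}\lesssim \big\|\,\|u(s)\|_{L^{2\sigma+2}_x}^{2\sigma}\,\|\Gamma(s)u(s)\|_{L^p_x}\big\|_{L^{q'}_s([t_1,t_2])},
\end{equation*}
and the time integral converges as $t_1\to\infty$ precisely when the decay rate of $\|u\|_{L^{2\sigma+2}_x}^{2\sigma}$ beats the integrability threshold; this is exactly where the hypothesis $2\sigma>\sigma(n)$ (the Strauss exponent, the positive root of $n\alpha^2+(n-2)\alpha-4=0$) enters, just as in the deterministic theory. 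For the stochastic integral, the Itô isometry shows that $\int_0^\infty S(-s)\,\Gamma(s)\phi\,g(s)\,dB(s)$ converges in $L^2_x(\Omega\times\mathbb{R}^n)$ as soon as $\int_0^\infty g(s)^2\|\Gamma(s)\phi\|_{L^2_x}^2\,ds<\infty$, which again follows from $|g(s)|=o(s^{-5/2})$ and $\|\Gamma(s)\phi\|_{L^2_x}\lesssim(1+s)$; almost sure convergence then follows from the martingale convergence theorem. Repeating the estimates with $\nabla$ in place of $\Gamma(t)$ controls the $H^1$ part, and collecting everything shows $S(-t)u(t)$ is Cauchy in $\Sigma$, producing the scattering state $u_+$.

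Finally, the mass-critical endpoint $2\sigma=4/n$ with $n=1,2$ is scale-invariant, so the nonlinear time integral above is only borderline; here I would not rely on the pseudo-conformal decay but instead impose the smallness hypothesis \eqref{eq:bound_of_data_for_induction} and run a global bootstrap (an induction-on-time continuity argument) closing the Strichartz norms, after which scattering follows from the smallness of the tail. I expect the main obstacle to be the stochastic contribution in the weighted variable: because $\Gamma(s)\phi\sim 2is\nabla\phi$ grows linearly in $s$, the weighted stochastic integral is only borderline convergent, and taming it is the sole reason the strong decay $|g(t)|=o(t^{-5/2})$ is required. Moreover, upgrading the in-expectation bounds on this term (and on the Itô correction in $\mathcal{E}$) to the pathwise statements needed for almost sure scattering is the delicate point, demanding the martingale inequalities and a Borel–Cantelli argument rather than a single moment estimate.
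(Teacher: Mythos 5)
Your treatment of the noise is genuinely different from the paper's, and the difference is worth noting: you keep the original time variable, apply the Galilei operator $\Gamma(t)=x+2it\nabla$ directly to the Duhamel formula, and observe that the resulting stochastic term $\int_0^t S(-s)\Gamma(s)\phi\,g(s)\,dB(s)$ is a (local) martingale, so its almost sure convergence follows from local-martingale convergence on the event of finite quadratic variation. The paper instead performs the Da Prato--Debussche splitting $u=u_*+z_*$ around the \emph{tail} of the stochastic convolution, applies the pseudo-conformal transformation to compactify $[0,\infty)$ into $[0,1)$, and proves global weighted Strichartz estimates for $\widetilde u_*$ (Theorem \ref{thm:global_Strichartz}); your pseudo-conformal energy step mirrors Lemma \ref{T12lemma} and its growth proposition. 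The martingale observation is sound and is even anticipated in the paper's introduction.

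However, there is a genuine gap at the nonlinear estimate, and it is exactly where the paper's hard work sits. Your display bounds the Duhamel increment by $\bigl\Vert\,\Vert u(s)\Vert_{L^{2\sigma+2}_x}^{2\sigma}\Vert\Gamma(s)u(s)\Vert_{L^p_x}\bigr\Vert_{L^{q'}_s}$, which presupposes a global-in-time bound on $\Vert\Gamma(\cdot)u\Vert_{L^{q}_sL^{p}_x([T,\infty))}$ for an admissible pair with $p>2$. The pseudo-conformal energy only controls $\Vert\Gamma(t)u(t)\Vert_{L^2_x}$, and $p=2$ cannot close: for $n=3$, pairing $\Vert u\Vert_{L^{2\sigma+2}}^{2\sigma}$ with $\Vert\Gamma u\Vert_{L^2}$ forces the dual exponent to correspond to $p>6$ as soon as $2\sigma>1=\sigma(3)$, i.e.\ throughout the claimed range. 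So you must run a bootstrap for weighted Strichartz norms of $\Gamma u$ (and $\nabla u$), and inside that bootstrap the stochastic convolution must be estimated in $L^q_tL^p_x([T,\infty))$ with smallness as $T\to\infty$. Writing
\begin{equation*}
\int_0^t S(t-s)\Gamma(s)\phi\,g(s)\,dB(s)=S(t)\!\int_0^\infty S(-s)\Gamma(s)\phi\,g(s)\,dB(s)-\int_t^\infty S(t-s)\Gamma(s)\phi\,g(s)\,dB(s),
\end{equation*}
the first piece obeys Strichartz, but the second is precisely the (weighted) tail of the stochastic convolution: non-adapted, not a martingale, and inaccessible to It\^o isometry or martingale convergence. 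This is the object the paper controls pathwise via Burkholder--Davis--Gundy, Markov, and Borel--Cantelli, yielding \eqref{eq:almost_sure_time_decay_of_tail} and \eqref{eq:global_Strichartz_est_for_tail}, which then drive Theorem \ref{thm:global_Strichartz}. Your route does not avoid these tail estimates; it conceals them, and as written neither of your two ``convergence statements'' can be closed. Two secondary inaccuracies: the It\^o correction to $\mathcal{E}$ is not just $g^2\Vert\Gamma(t)\phi\Vert_{L^2}^2$ but also contains martingale terms weighted by $(1+t)^2$ against $\Delta\overline u$ and $\vert u\vert^{2\sigma}\overline u$ (the paper's $T_2$), whose pathwise control is one true source of the $o(t^{-5/2})$ requirement; and your claim that the weighted noise integral is ``borderline'' under this decay is off --- its quadratic variation converges comfortably once $g=o(t^{-3/2-})$, the $5/2$ threshold instead arising from $\alpha\geq n\sigma+\tfrac12$ with $n\sigma$ up to $2$ and from $\langle\cdot\rangle^2 g\in L^2_t$ in the energy estimates.
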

\begin{Remark}
    It is important to note that Theorem \ref{thm:result_Sigma_scattering} implies the almost sure scattering in $L^{2}_{x}$ with $\Sigma$ initial data up to energy-critical nonlinearities in all dimensions. Therefore, our result for $L^{2}_{x}$ scattering covers the entire energy sub-critical range if $n\geq 1$, while a small initial data assumption is required in the case where $2\sigma=\frac{4}{n}$ for $n=1,2$ (for the deterministic analogue with $n\geq 2$, see \cite{Tsutsumi_Yajima}). 
    
\end{Remark}
Lastly, excluding the mass sub-critical range as in the deterministic setting, we state the almost sure energy scattering of global $H^{1}_{x}$ solutions to \eqref{eq:additive_SNLS}.
\begin{theorem}\label{thm:result_H^1_scattering}
    Suppose that $u_{0}\in H^{1}_{x}(\mathbb{R}^{n})$, $\frac{4}{n} \leq 2\sigma<\frac{4}{n-2}$ if $n\geq 3$, $\frac{4}{n}\leq 2\sigma<\infty$ if $n=1,2$. Assume further that $\vert g(t)\vert=o(t^{-1})$, a.s., as $t\to\infty$. Then the global $H^{1}_{x}$ solution $u$ of \eqref{eq:additive_SNLS} scatters forward in time, i.e., almost surely there exists $u_{+}\in H^{1}_{x}(\mathbb{R}^{n})$ such that
    \begin{equation*}
        \lim_{t\to\infty}\Vert S(-t)u(t)-u_{+}\Vert_{H^{1}_{x}}=0.
    \end{equation*}
\end{theorem}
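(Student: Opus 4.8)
The plan is to follow the deterministic energy-scattering template for the defocusing intercritical NLS — a priori $H^1$ control from the conservation laws, a Morawetz estimate furnishing a global spacetime bound, and the passage from a finite global Strichartz norm to scattering — while replacing the exact conservation laws by almost sure bounds in which the additive noise contributes only controllable drift and martingale terms. Throughout I work pathwise with the mild (Duhamel) formulation
\begin{equation*}
S(-t)u(t) = u_0 + i\int_0^t S(-s)\big(|u|^{2\sigma}u\big)(s)\,ds - i\int_0^t S(-s)\,dW(s),
\end{equation*}
so that forward scattering in $H^1_x$ amounts to showing that \emph{both} the nonlinear Duhamel integral and the stochastic convolution converge in $H^1_x$ as $t\to\infty$. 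The unitarity of $S(t)$ on $H^1_x$ already disposes of the stochastic convolution: its $H^1$-valued quadratic variation is $\|\phi\|_{H^1}^2\int_0^t g(s)^2\,ds$, finite by the standing assumption $g\in L^2_t([0,\infty))$, so that Itô integral converges almost surely and in $L^2(\Omega;H^1_x)$.

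First I would establish the a priori bound $\sup_{t\ge 0}\|u(t)\|_{H^1_x}<\infty$ almost surely. Applying Itô's formula to the mass $M(u)=\|u\|_{L^2_x}^2$ and to the energy $E(u)=\tfrac12\|\nabla u\|_{L^2_x}^2+\tfrac{1}{2\sigma+2}\|u\|_{L^{2\sigma+2}_x}^{2\sigma+2}$, the deterministic part of the flow is conservative, while the noise produces a drift proportional to $g^2$ times spatial integrals of $\phi$ and its derivatives (the Itô correction) together with a martingale whose quadratic variation is controlled by $\int_0^t g^2\,ds$. Since $\phi$ is Schwartz and $\int_0^\infty g^2\,ds<\infty$ almost surely, a Burkholder–Davis–Gundy and stochastic Gronwall argument (with a stopping-time localization in the superlinear regime) yields the uniform-in-time bound and, moreover, the almost sure convergence of $E(u(t))$ as $t\to\infty$. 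Note that this uniform $H^1$ control is the extra information not contained in the mere statement $u\in C_tH^1_x$ of Theorem~\ref{thm:result_well-posedness}.

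The crux is the stochastic Morawetz estimate. I would apply Itô's formula to an (interaction) Morawetz action, e.g.\ the classical $\mathcal{M}(t)=2\,\mathrm{Im}\int \tfrac{x}{|x|}\cdot\nabla u\,\bar u\,dx$ for $n\ge 3$, or the tensorized functional $\iint |u(t,y)|^2\,\tfrac{(x-y)}{|x-y|}\cdot\mathrm{Im}(\bar u\nabla u)(t,x)\,dy\,dx$ needed to reach low dimensions and the full range. The deterministic part of $d\mathcal{M}$ reproduces the usual monotonicity: a coercive density bounding $\int \tfrac{|u|^{2\sigma+2}}{|x|}\,dx$ (resp.\ a quartic interaction density) plus a nonnegative kinetic term. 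The noise contributes a martingale, linear in $dW$ and hence with quadratic variation $\lesssim \int_0^t g^2\,ds$ after using $\sup_t\|u\|_{H^1_x}<\infty$ and the Schwartz decay of $\phi$, together with an Itô drift correction proportional to $g^2$ times bounded spatial integrals. Integrating in time, bounding the boundary term by $\sup_t\|u\|_{L^2_x}\|\nabla u\|_{L^2_x}\lesssim 1$, and using $\int_0^\infty g^2\,ds<\infty$ with the pointwise decay $|g(t)|=o(t^{-1})$ to absorb the lower-order remainders linear in $g$, I would obtain the global spacetime bound $\int_0^\infty\!\int \tfrac{|u|^{2\sigma+2}}{|x|}\,dx\,dt<\infty$ (resp.\ a global mixed-norm bound) almost surely. \emph{This is the step I expect to be the main obstacle}: one must verify that the stochastic forcing neither destroys the sign underlying the Morawetz monotonicity nor generates remainder terms that fail to be integrable in $T$, and it is precisely here that the decay hypothesis on $g$ is calibrated.

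Finally, I would upgrade the global Morawetz bound to finiteness of the scattering-critical Strichartz norm $\|u\|_{L^q_tW^{1,p}_x([0,\infty))}<\infty$ by the standard device of splitting $[0,\infty)$ into finitely many subintervals on which a controlling norm is small and running the local theory of Theorem~\ref{thm:result_well-posedness} on each; here the noise enters only as a forcing term whose Strichartz norm is dominated by $\|\phi\|_{W^{1,p'}_x}\,\|g\|_{L^2_t}$ and is therefore harmless. With this global Strichartz bound in hand, $s\mapsto S(-s)(|u|^{2\sigma}u)(s)$ has integrable $H^1$ tails, so the nonlinear Duhamel integral is Cauchy in $H^1_x$ as $t\to\infty$; combined with the already-established convergence of the stochastic convolution, this produces the limit $u_+\in H^1_x$ and the conclusion $\|S(-t)u(t)-u_+\|_{H^1_x}\to 0$ almost surely.
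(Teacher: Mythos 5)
Your strategy is genuinely different from the paper's (which never runs Morawetz-type arguments on the stochastic equation: it writes $u=u_*+z_*$ with $z_*$ the \emph{tail} of the stochastic convolution, matches $u_*(T)=y(T)$ at a large time $T$ with a solution $y$ of the deterministic NLS \eqref{eq:deterministic_NLS}, invokes deterministic scattering for $y$ via Proposition \ref{lemma_deterministic_scattering}, and closes by a perturbation argument for $v=u_*-y$ in $X^1([T,\infty))$), but your plan has a decisive gap at the mass-critical endpoint $2\sigma=\frac{4}{n}$, which the theorem includes. A Morawetz or interaction-Morawetz bound together with an a priori $L^\infty_tH^1_x$ bound does \emph{not} yield scattering at mass criticality, even in the deterministic case: the scattering-critical norm $L^{2+4/n}_{t,x}$ carries strictly more time integrability than any interpolant of the quantities you control (energy, mass, and the Morawetz norm), so the interval-decomposition bootstrap cannot close. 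One can see this degeneracy inside the paper itself: in the Appendix, Case 1, the interpolation exponent $\beta(\theta)=\frac{n}{2}\bigl(2\sigma-\frac{n+8\theta+2}{n(2\theta+1)}\bigr)$ becomes $\frac{2-n}{2(2\theta+1)}<0$ when $2\sigma=\frac{4}{n}$, $n\geq 3$, which is precisely why Case 2 of the Appendix abandons Morawetz and instead invokes Dodson's theorem for the deterministic mass-critical NLS. Your proposal contains no substitute for Dodson's theorem, and ``stochasticizing'' that concentration-compactness machinery is not something the soft Itô/BDG arguments you describe can accomplish; the entire point of the paper's reduction to the deterministic flow is to be able to use Dodson (and, in the intercritical range, Tao--Visan--Zhang-type spacetime bounds) as a black box.

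A second, more technical, gap: in your final bootstrap you treat the noise forcing on each subinterval as ``harmless,'' with Strichartz norm dominated by $\Vert\phi\Vert_{W^{1,p'}_x}\Vert g\Vert_{L^2_t}$. There is no such pathwise bound for stochastic convolutions; pathwise, global-in-time Strichartz control of $\int S(t-s)\,dW(s)$ (equivalently of the tail $z_*$) is exactly what Section \ref{section:notation} of the paper labors to produce, via moment bounds, Burkholder--Davis--Gundy, and a Borel--Cantelli argument over dyadic times, and this is where the pointwise decay hypothesis $\vert g(t)\vert=o(t^{-1})$ is actually consumed (it yields \eqref{eq:almost_sure_time_decay_of_tail} and hence the finiteness and late-time smallness of the Strichartz norms of $z_*$ entering the $X^1([T,\infty))$ estimates). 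Your proposal uses essentially only $g\in L^2_t$ a.s.\ and mentions the $o(t^{-1})$ decay only vaguely to ``absorb remainders''; that underuse of the hypothesis is a symptom of the missing step, since without quantitative decay of $g$ one cannot make the stochastic contribution small on late intervals, which any bootstrap (yours or the paper's) requires. Your treatment of the stochastic part of $S(-t)u(t)$ by martingale convergence is fine, and your stochastic Morawetz program is plausibly executable in the strictly intercritical range $\frac{4}{n}<2\sigma<\frac{4}{n-2}$; it is the mass-critical endpoint and the pathwise control of the noise forcing that break.
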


\begin{Remark}
    In our study of scattering of \eqref{eq:additive_SNLS}, we only consider the question of asymptotic completeness. The standard arguments for establishing the existence of the wave operators for deterministic PDEs require solving the equation backward in time, which could prove to be problematic when we consider the $\{\mathcal{F}_{t}\}$-adaptedness of the solutions.
\end{Remark}

\begin{Remark}
    For simplicity, we assume that $\phi(x)$ is a Schwartz function; nonetheless, it might be possible to obtain the aforementioned results under a weaker assumption on $\phi(x)$. 
\end{Remark}
In the rest of this section, we intend to mention the physical interpretations of the additive and multiplicative noises in a more general sense, and subsequently overview the related literature for NLS/SNLS. For this purpose, we shall record the concerning equations here: 
\begin{equation}\label{eq:general_form_additive}
    idu-\Delta udt+N(u)dt=dW,
\end{equation}
and
\begin{equation}\label{eq:general_form_multiplicative}
    idu-\Delta udt+N(u)dt=u \circ dW,
\end{equation}
where $\circ$ denotes the Stratonovich product.

The effect of additive noise on a system is independent of its current state, in sharp contrast to the multiplicative case, where the impact of the noise depends on the system's configuration. Therefore, stochastic equations with additive noise are typically used to model phenomena such as thermal fluctuations and random external forcing, whereas equations with multiplicative noise are employed to model state-dependent random perturbations and feedback mechanisms. The primary reason for considering additive and multiplicative noises separately is the assumption that either internal or external random effects are the primary contributor to the random behavior of the system.

A stochastic differential equation with decaying noise can be viewed as a perturbation of the deterministic equation. In the case of additive noise with no decay over time, energy is persistently injected into the system. Therefore, without a time decay assumption, we do not expect to observe dispersion, and this absence of dispersive behavior would consequently prevent the solutions from scattering. For additional observations on the effect of additive and multiplicative noises on the system, as well as their physical significance, we refer to \cite{moss1989noise1, moss1989noise2} and references therein. 

In the multiplicative noise case, depending on the decay behavior of solutions to the deterministic equation, it is possible that under weaker assumptions on the noise term, the decay of the solution could cause the stochastic convolution term in the Duhamel formulation to decay as well. However, in the additive noise case, since the solution does not appear in the stochastic convolution term, such behavior should not be expected. 

In the context of local and global well-posedness, \eqref{eq:general_form_additive} and \eqref{eq:general_form_multiplicative} were first considered by de Bouard and Debussche in \cite{de_Bouard_1999,deBouard_H1}. For the additive case \eqref{eq:general_form_additive}, the noise $W$ is given via a possibly infinite sum of $H^{1}_{x}$ image of orthonormal basis functions of $L^{2}_{x}$ under a class of Hilbert-Schmidt operators. In both focusing and defocusing cases, they established the global well-posedness in $H^{1}_{x}(\mathbb{R}^{n})$ (with the sub-criticality assumption) by applying a method known as Da Prato-Debussche trick \cite{Da_Prato_trick}. As for \eqref{eq:general_form_multiplicative}, by choosing a mass conservative setting, the authors demonstrated the global well-posedness in $H^{1}_{x}(\mathbb{R}^{n})$ under slightly more restrictive assumptions on the nonlinear term, see Theorem 4.1, 4.6 in \cite{deBouard_H1}. Also, for earlier works concerning blow-up behavior and numerical simulations, see \cite{de_Bouard_blow-up_1,de_Bouard_blow-up_3,Debussche_Numerical}.

In \cite{Barbu_2014_rescaling}, regarding the mass conservative and nonconservative cases, Barbu-Röckner-Zhang established the global well-posedness of SNLS with multiplicative noise \eqref{eq:general_form_multiplicative} in $L^{2}_{x}(\mathbb{R}^{n})$ for all spatial dimensions in the mass-subcritical case. As a continuation of their previous work, in \cite{Barbu_2016}, they also showed $H^{1}_{x}(\mathbb{R}^{n})$ global well-posedness in the defocusing case assuming energy sub-criticality. The same result was obtained in the focusing case only for the mass sub-critical range. 

Mass-critical and energy-critical SNLS with additive noise \eqref{eq:general_form_additive}, defined through a class of Hilbert-Schmidt operators, were investigated by Oh-Okamoto in \cite{Oh_Critical}. It was stated that the defocusing mass-critical SNLS is globally well-posed in $L^{2}_{x}(\mathbb{R}^{n})$ for $n\geq 1$, and the defocusing energy-critical SNLS is globally well-posed in $H^{1}_{x}(\mathbb{R}^{n})$ for the spatial dimensions $3\leq n\leq 6$. Later, by adapting the $I$-method (which is introduced by Colliander-Keel-Staffilani-Takaoka-Tao, \cite{Colliander2002}), the defocusing cubic SNLS with additive noise as in \eqref{eq:general_form_additive} (considered as the image of the space-time white noise under a class of Hilbert-Schmidt operators) was shown to be globally well-posed in $H^{s}(\mathbb{R}^{3})$ for $s>\frac{5}{6}$ by Cheung-Li-Oh, \cite{cheung2020conservation}. Note that this result coincides with the global well-posedness range for the deterministic equation, see \cite{Colliander2002}.

 In connection with the SNLS equations in different settings, we refer to \cite{Brzezniak_compact_1,Brzezniak_compact_2} for results on compact manifolds in the multiplicative noise case, also see \cite{chouk_modulated_dispersion,de_Bouard_modulated_dispersion} for Schrödinger equations with modulated dispersion and white noise dispersion.

Next, we first review the scattering of the deterministic nonlinear Schrödinger equations with power-type nonlinearities
\begin{equation}\label{eq:general_form_deterministic_NLS}
    i\partial_{t}u+\Delta u=\lambda \vert u\vert^{p-1}u
\end{equation}
in the spaces $L^{2}_{x}$, $\Sigma$ and $H^{1}_{x}$. One of the earliest results concerning the scattering of NLS \eqref{eq:general_form_deterministic_NLS} was obtained in \cite{Tsutsumi_Yajima}, which states that for $n\geq 2$, $1+\frac{2}{n}<p<1+\frac{4}{n-2}$, and $\Sigma$ initial data, the global solution $u(t)$ has a unique strong limit in $L^{2}_{x}(\mathbb{R}^{n})$ as $t\to\pm\infty$. In \cite{Ginibre_Sigma_Scattering,Tsutsumi_Sigma_Scattering}, the result of scattering-in-time in $\Sigma$ was established when $n\geq 1$ and $1+\sigma(n)<p<1+\frac{4}{n-2}$ ($1+\sigma(n)<p<\infty$ if $n=1,2$), where $\sigma(n)$ is given as in \eqref{eq:Strauss_exponent}, see Chapter 7.4 of \cite{Cazenave_book} for details. In \cite{Cazenave_1992_Scattering}, this result was partially improved in the defocusing case: for $n\geq 3$, $1+\frac{4}{n+2}<p<1+\frac{4}{n-2}$, and initial in $\Sigma$, the global solution $u(t)$ to \eqref{eq:general_form_deterministic_NLS} has a unique strong limit in $\Sigma$ as $t\to\pm\infty$, which eventually implies that the scattering holds in $\Sigma$. Moreover, $H^{1}_{x}$ scattering in the inter-critical case was demonstrated in \cite{Ginibre_Scattering_H1} for $n\geq 3$ and \cite{Nakanishi_scattering_H1} for $n=1,2$. The mass-critical case, $p=1+\frac{4}{n}$, was completely solved for all dimensions through the series of papers \cite{Dodson_dimension_3,Dodson_dimension_1,Dodson_dimension_2}. In relation to the scattering below the energy space, and scattering in $H^{1}_{x}$ for the energy critical equation, we refer to \cite{Tzirakis_Tensor,Colliander_Scattering_rough_dim_3,Colliander_energy_crit_scattering,Visan_energy_crit_higher_dim,Visan_I_Method}. For the long-range case (which corresponds to $1<p\leq 1+\frac{2}{n}$), it was shown in \cite{Barab_Nonexistence, Strauss_Nonexistence} that scattering cannot hold even in $L^{2}_{x}$. Moreover, for $1+\frac{2}{n}<p<1+\frac{4}{n}$, both $L^{2}_{x}$ and energy scattering problems are open, and only some partial results were obtained, see \cite{Holmer_subcrit,Gyu_Eun_Lee}.

As regards to the scattering in the probabilistic setting, it is remarkable to note that the scattering problem for SNLS especially with the  multiplicative noise \eqref{eq:general_form_multiplicative} was initiated quite recently, only few related results are available in the literature, we shall mention these works in what follows. In \cite{Herr_2019}, the scattering problem of \eqref{eq:general_form_multiplicative} for $n\geq 3$ was studied comprehensively by Herr-Röckner-Zhang. 
Under certain restrictions on the noise term, the authors established global well-posedness in $H^{1}_{x}$ and $\Sigma$. 
As a consequence of it, they showed almost sure scattering in $\Sigma$ for the defocusing problem when the exponent of the nonlinearity is between the Strauss exponent \eqref{eq:Strauss_exponent} and the energy-criticality exponent, including the energy-critical case as well. They also established the $H^{1}_{x}$ scattering, for $3\leq n\leq 6$, in the mass-critical, inter-critical, and energy-critical cases for the defocusing problem, in addition to this, by imposing further restrictions on the noise term, the authors showed with high probability that scattering holds for both focusing and defocusing problems in pseudo-conformal space and $H^{1}_{x}$ for all energy sub-critical nonlinearities. 

More recently, in \cite{Fan_Zhao}, the mass-critical problem in the case of $n=3$ has been investigated, demonstrating that time decay in the noise term is essential for proving $L^{2}_{x}$ scattering. Furthermore, in the absence of time decay, there are recent partial results under small noise assumption for the linear stochastic Schrödinger equation, see \cite{Fan_Linear_Schrodinger}. 

As mentioned above, the only results regarding the scattering problem of SNLS available in the literature address the multiplicative noise case. Therefore, to the best of our knowledge, the study of the long-time behavior of SNLS with additive noise \eqref{eq:general_form_additive} is initiated in this paper. Our approach relies on the Da Prato-Debussche trick, which we utilize to estimate the tail of the stochastic convolution in the Duhamel formulation. It is well-known that the study of stochastic convolution 
	\begin{equation*}
		z(t):=i\int_{0}^{t}S(t-s)dW(s)
	\end{equation*}
	is crucial for establishing the well-posedness of SPDE with additive noise, see \cite{dapratozabczyk}, \cite{de_Bouard_1999}.   Investigations in  this article demonstrate that the tail of the stochastic convolution
	\begin{equation*}
		z_*(t):=-i\int_{t}^{\infty}S(t-s)dW(s)
	\end{equation*}
	has a similar importance for the scattering problem for nonlinear SPDE with additive noise. While an integral like
	\begin{equation*}
		-i\int_{0}^{t}S(-s)dW(s)
	\end{equation*}
	would be a martingale with respect to $t$, and would be amenable to applying the Burkholder-Davis-Gundy inequality, the stochastic convolution is not a martingale and thus is harder to study. With the tail of the stochastic convolution, we have an even greater difficulty: it is not even an adapted process. So tools of stochastic calculus cannot be directly applied. Also, the results on it, as well as their applications to the scattering problem must be pathwise. 
	In this article, as well as in  \cite{Fan_Zhao}, \cite{Herr_2019}, the study of scattering is pathwise, that is, for the nonlinear SPDE in question, it is shown that almost every path of a solution behaves in limit like a solution of the linear deterministic counterpart. 

In view of all this, a thorough study of the tail of the stochastic convolution on Hilbert and Banach spaces is warranted.   As opposed to the wide literature on stochastic convolution, e.g. \cite{Hausenblas_Stochastic_convolution_1,Hausenblas_Stochastic_convolution_2,Neerven_stochastic_convolution_3,Neerven_stochastic_convolution_2,Neerven_stochastic_convolution_1,Veraar_stochastic_convolution,Brzezniak_stochastic_convolution}, we are aware of no such study. Therefore we intend to initiate an exploration of this subject in our forthcoming work.

It is worth noting that our argument in this paper is inspired by \cite{Herr_2019}, we redesign it in order for it to be compatible with the additive noise case by overcoming such difficulties mentioned above. The rescaling transformation in \cite{Herr_2019} employed to establish global-in-time Strichartz estimates introduces additional lower-order terms into the evolution operator. Estimating these additional terms requires the local smoothing estimates developed in \cite{MARZUOLA}, and this causes the restriction $n\geq 3$, while in our case, thanks to the linearity of our transformations, the evolution operator remains unchanged, which subsequently allows us to utilize the standard Strichartz estimates associated to the Schrödinger evolution operator.

The rest of the paper is organized as follows. In Section \ref{section:notation}, we introduce some notations and tools we use throughout the paper, as well as some preliminary observations, which we make use of later. In Section \ref{section:well-posedness}, we prove Theorem \ref{thm:result_well-posedness}, which provides local and global well-posedness. Section \ref{section:pseudo-conformal_energy} contains our observations on the growth of the pseudo-conformal energy, and we utilize these in sections \ref{section:scattering_L^2}, \ref{section:scattering_Sigma}, and \ref{section:scattering_H^1}, which will be useful in the proofs of Theorem \ref{thm:result_L^2_scattering}, Theorem \ref{thm:result_Sigma_scattering}, and Theorem \ref{thm:result_H^1_scattering}, respectively. Finally, we give the proof of Proposition \ref{lemma_deterministic_scattering} in the Appendix.

\section{Preliminaries}\label{section:notation}
\subsection{Function spaces}
The mixed Lebesgue space $L^q_t L^p_x (I \times \mathbb{R}^n)$ is defined by
\begin{equation*}
    L^{q}_{t}L^{p}_{x}(I\times\mathbb{R}^{n})=\{f:I\times\mathbb{R}^{n}\to\mathbb{C}\mid \Vert f\Vert_{L^{q}_{t}L^{p}_{x}(I\times\mathbb{R}^{n})}<\infty\}
\end{equation*}
where $p,q\geq 1$, $I$ is a measurable subset of $\mathbb{R}$, and
\begin{equation*}
    \Vert f\Vert_{L^{q}_{t}L^{p}_{x}(I\times\mathbb{R}^{n})}=\left(\int_{I}\left(\int_{\mathbb{R}^{n}}\vert f(t,x)\vert^{p}dx\right)^{\frac{q}{p}}dt\right)^{\frac{1}{q}}.
\end{equation*}
The Sobolev spaces $W^{s,p}_{x}(\mathbb{R}^{n})$, for $s\in\mathbb{R}$ and $1<p<\infty$, are defined as the closure of the Schwartz class functions under the norm
\begin{equation*}
    \Vert f\Vert_{W^{s,p}_{x}}=\Vert\langle\nabla\rangle^{s}f\Vert_{L^{p}_{x}}.
\end{equation*}
If $p=2$, thanks to the Plancherel identity, the space $H^{s}_{x}=W^{s,2}_{x}$ is defined by 
\begin{equation*}
    \Vert f\Vert_{H^{s}_{x}}=\Vert\langle\xi\rangle^{s}\widehat{f}\Vert_{L^{2}_{\xi}}
\end{equation*}
where the Fourier transform $\widehat{f}$ of $f$ is given by 
\begin{equation*}
    \widehat{f}(\xi)=\int_{\mathbb{R}^{n}}e^{-ix\cdot\xi}f(x)dx.
\end{equation*}
The pseudo-conformal space $\Sigma(\mathbb{R}^{n})=\Sigma$ is defined as
\begin{equation*}
    \Sigma=\{f:\mathbb{R}^{n}\to\mathbb{C} \mid \Vert f\Vert_{\Sigma}<\infty\}
\end{equation*}
where
\begin{equation*}
    \Vert f\Vert_{\Sigma}=\Vert f\Vert_{H^{1}_{x}(\mathbb{R}^{n})}+\Vert \vert \cdot \vert f\Vert_{L^{2}_{x}(\mathbb{R}^{n})}.
\end{equation*}
The Sobolev embedding $W^{s,p}_{x}(\mathbb{R}^{n})\hookrightarrow L^{q}_{x}(\mathbb{R}^{n})$ holds for $1<p<q<\infty$, $s>0$, and $\frac{1}{q}\geq\frac{1}{p}-\frac{s}{n}$. We call a pair $(p,q)$ Strichartz admissible, if $p,q\geq 2$, and the following relation is satisfied:
\begin{equation}\label{eq:Strichartz_admissible_pair}
    \frac{2}{q}=n\left(\frac{1}{2}-\frac{1}{p}\right)\quad\text{and}\quad (p,q,n)\neq(\infty,2,2). 
\end{equation}
Next, we give the well-known linear Strichartz estimate for the Schrödinger operator $S(t)=e^{-it\Delta}$ associated to \eqref{eq:additive_SNLS}:
\begin{lemma}[Linear Strichartz estimates \cite{taononlinear}]\label{stichartzlemma}
    Let $(p,q)$, $(p_{1},q_{1})$ be any pair of Strichartz admissible exponents and $(p',q')$, $(p'_{1},q'_{1})$ be their Hölder conjugates, respectively. Then, for any $s\in\mathbb{R}$, we have
    \begin{enumerate}[(i)]
        \item  $\Vert S(t) u_{0}\Vert_{L^{q}_{t}W^{s,p}_{x}(\mathbb{R}\times\mathbb{R}^{n})}\lesssim\Vert u_{0}\Vert_{H^{s}_{x}(\mathbb{R}^{n})}$,
        \item  $\left\Vert\int_{\mathbb{R}} S(-t')F(t')dt'\right\Vert_{L^{2}_{x}(\mathbb{R}^{n})}\lesssim\Vert F\Vert_{L^{q'}_{t}L^{p'}_{x}(\mathbb{R}\times\mathbb{R}^{n})}$,
        \item  $\left\Vert\int_{0}^{t} S(t-t')F(t')dt'\right\Vert_{L^{q}_{t}W^{s,p}_{x}(\mathbb{R}\times\mathbb{R}^{n})}\lesssim\Vert F\Vert_{L^{q'_{1}}_{t}W^{s,p'_{1}}_{x}(\mathbb{R}\times\mathbb{R}^{n})}$.
    \end{enumerate}
\end{lemma}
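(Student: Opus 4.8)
The plan is to reduce everything to the two fundamental properties of the free propagator $S(t)=e^{-it\Delta}$: the $L^2_x$ unitarity $\Vert S(t)f\Vert_{L^2_x}=\Vert f\Vert_{L^2_x}$, and the dispersive decay $\Vert S(t)f\Vert_{L^\infty_x}\lesssim|t|^{-n/2}\Vert f\Vert_{L^1_x}$. Interpolating these via Riesz--Thorin yields, for every $2\le p\le\infty$, the pointwise-in-time bound $\Vert S(t)f\Vert_{L^p_x}\lesssim|t|^{-n(1/2-1/p)}\Vert f\Vert_{L^{p'}_x}$, which is the only quantitative input the remaining argument needs.

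First I would establish the case $s=0$ by the standard $TT^*$ method. Writing $Tf:=S(t)f$, estimate (i) with $s=0$ asserts $\Vert Tf\Vert_{L^q_tL^p_x}\lesssim\Vert f\Vert_{L^2_x}$, while estimate (ii) is precisely the adjoint bound $\Vert T^*F\Vert_{L^2_x}\lesssim\Vert F\Vert_{L^{q'}_tL^{p'}_x}$ with $T^*F=\int_{\mathbb R}S(-t')F(t')\,dt'$; by duality the two are equivalent, and both are implied by the single estimate
\begin{equation*}
    \left\Vert\int_{\mathbb R}S(t-t')F(t')\,dt'\right\Vert_{L^q_tL^p_x}\lesssim\Vert F\Vert_{L^{q'}_tL^{p'}_x}.
\end{equation*}
To prove this, apply the dispersive bound inside the spatial norm, obtaining a time-convolution against the kernel $|t-t'|^{-n(1/2-1/p)}$. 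The admissibility relation \eqref{eq:Strichartz_admissible_pair} forces the exponent to equal $2/q$, which is exactly the homogeneity needed for the Hardy--Littlewood--Sobolev inequality in the $t$ variable mapping $L^{q'}_t\to L^q_t$; this closes the estimate whenever $2/q<1$, i.e. in the non-endpoint range $q>2$.

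Second, the weighted versions with $s\neq0$ follow at once because $\langle\nabla\rangle^s$ is a Fourier multiplier and hence commutes with $S(t)$: applying the $s=0$ estimate to $\langle\nabla\rangle^s f$ and using $\Vert\langle\nabla\rangle^s f\Vert_{L^2_x}=\Vert f\Vert_{H^s_x}$ gives (i), and similarly for the others. For the retarded estimate (iii), whose integral runs only over $0\le t'\le t$ rather than all of $\mathbb R$, I would first prove the untruncated analogue (by composing (i) with (ii) exactly as above) and then invoke the Christ--Kiselev lemma to insert the sharp cutoff $\mathbf 1_{\{t'<t\}}$; this step is legitimate precisely because the exponents are off-endpoint, equivalently $q,q_1>2$.

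The main obstacle is the endpoint pair permitted by \eqref{eq:Strichartz_admissible_pair}, namely $q=2$ with $p=2n/(n-2)$ for $n\ge3$: there the time kernel $|t-t'|^{-2/q}=|t-t'|^{-1}$ fails to be locally integrable and Hardy--Littlewood--Sobolev breaks down, so one must instead run the Keel--Tao argument, dyadically decomposing in $|t-t'|$ and summing via a bilinear interpolation between the dispersive and energy estimates. The Christ--Kiselev lemma likewise degenerates at $q=2$, so the truncated estimate (iii) at the double endpoint demands the separate, more delicate treatment. Since the statement is quoted from \cite{taononlinear}, in practice I would simply cite it, but the above is the route I would reconstruct.
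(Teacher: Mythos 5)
The paper offers no proof of this lemma at all: it is quoted directly from the cited reference \cite{taononlinear}, so there is nothing in the paper to compare against beyond that citation. Your reconstruction is the standard argument of that reference (dispersive estimate plus $TT^*$ and Hardy--Littlewood--Sobolev off the endpoint, Keel--Tao at $q=2$, Christ--Kiselev for the retarded estimate with mismatched pairs) and is correct, including the accurate identification of where the non-endpoint hypotheses are genuinely used.
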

The stochastic Gronwall's inequality is stated as follows, see \cite{Xicheng_Zhang_Gronwall}. 
\begin{lemma}[Stochastic Gronwall's Inequality]\label{gronwall}
    Let $\xi(t)$ and $\eta(t)$ be two nonnegative cadlag $\mathcal{F}_t$-adapted process, $A_t$ a continuous nondecreasing $\mathcal{F}_t$-adapted processes with $A_0 = 0$ and $M_t$ a local martingale with $M_0 = 0$. Suppose that
\begin{equation*}
    \xi(t) \leq \eta(t) + \int_0^t \xi(s) dA_s + M_t
\end{equation*}
for any $t \geq 0$. Then for any $0 < q < p < 1$ and $\tau > 0$, we have
\begin{equation*}
    \left( \mathbb{E}\left[\sup\limits_{0 \leq t \leq \tau} \xi(t)^q \right] \right)^{1/q} \leq \left( \frac{p}{p-q} \right)^{1/q} \big( \mathbb{E}\left[\exp\left( p A_{\tau} / (1-p) \right) \right] \big)^{(1-p)/p} \mathbb{E}\left[\sup\limits_{0 \leq t \leq \tau} \eta(t) \right].
\end{equation*}
\end{lemma}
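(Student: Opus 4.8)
The plan is to reduce the stochastic inequality to a pathwise (deterministic) Gronwall estimate, eliminate the integral term with an integrating factor, recast the surviving stochastic term as an honest local martingale, and finally turn everything into an $L^q$ bound via a weak-type maximal inequality together with Hölder. Throughout I would first dispose of the trivial case in which the right-hand side is infinite, and then localize: picking stopping times $\sigma_k\uparrow\infty$ that reduce $M$ to a genuine martingale and render all processes bounded, I would prove the estimate for the processes stopped at $\sigma_k$ (with $\tau$ fixed) and recover the full statement by monotone convergence as $k\to\infty$.

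Step 1 (pathwise Gronwall and reduction to a local martingale). Fix $\omega$ and set $h_t:=\eta(t)+M_t$, a merely cadlag path. The hypothesis becomes $\xi(t)\le h_t+\int_0^t\xi(s)\,dA_s$, a bona fide deterministic Gronwall inequality whose inhomogeneity $h$ need not be nonnegative; as $A$ is continuous and nondecreasing, $\int_0^t\xi\,dA$ is a pathwise Lebesgue--Stieltjes integral, so the integrating factor $e^{-A_t}$ applies. Writing $u(t)=\int_0^t\xi\,dA$ and using the Stieltjes product rule for $u(t)e^{-A_t}$, I would obtain the usual conclusion
\begin{equation*}
\xi(t)\le h_t+\int_0^t e^{A_t-A_s}h_s\,dA_s.
\end{equation*}
Bounding $\eta(s)\le\eta^*_t:=\sup_{s\le t}\eta(s)$ and using $\int_0^t e^{A_t-A_s}\,dA_s=e^{A_t}-1$ controls the $\eta$-part by $\eta^*_t\,e^{A_t}$. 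For the martingale part, integration by parts (with no covariation term, since $e^{-A}$ is continuous of bounded variation) gives $\int_0^t e^{-A_s}M_s\,dA_s=\int_0^t e^{-A_s}\,dM_s-e^{-A_t}M_t$, whence $M_t+\int_0^t e^{A_t-A_s}M_s\,dA_s=e^{A_t}N_t$ with
\begin{equation*}
N_t:=\int_0^t e^{-A_s}\,dM_s
\end{equation*}
a local martingale. The net pathwise bound is $\xi(t)\le e^{A_t}\big(\eta^*_t+N_t\big)$.

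Step 2 (maximal inequality). The decisive structural point is that $X_t:=\eta^*_t+N_t\ge e^{-A_t}\xi(t)\ge0$; since $\eta^*$ is nondecreasing, $X$ is a nonnegative local submartingale, and after localization a true submartingale with $\mathbb{E}[X_\tau]=\mathbb{E}[\eta^*_\tau]$ (because $\mathbb{E}[N_\tau]=0$). This forced nonnegativity is what saves the argument: the maximal function of the local martingale $N$ is uncontrollable in isolation, whereas the constrained sum is not. Doob's weak-type inequality then yields $\lambda\,\mathbb{P}\big(X^*_\tau\ge\lambda\big)\le\mathbb{E}[\eta^*_\tau]$ with $X^*_\tau:=\sup_{t\le\tau}X_t$, and the layer-cake formula upgrades this to $\mathbb{E}\big[(X^*_\tau)^{m}\big]\le(1-m)^{-1}\big(\mathbb{E}[\eta^*_\tau]\big)^{m}$ for every $m\in(0,1)$.

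Step 3 (combine, and the main obstacle). From $\xi^*_\tau:=\sup_{t\le\tau}\xi(t)\le e^{A_\tau}X^*_\tau$ I would estimate $\mathbb{E}\big[e^{qA_\tau}(X^*_\tau)^q\big]$ by Hölder, separating the factor $e^{qA_\tau}$ from $(X^*_\tau)^q$ with a conjugate pair of exponents tied to $p$ and $1-p$; feeding in the layer-cake bound of Step 2 (with $m$ the resulting power on $X^*_\tau$) and choosing these exponents and $m$ so as to recover the moment $\big(\mathbb{E}[e^{pA_\tau/(1-p)}]\big)^{(1-p)/p}$, the linear factor $\mathbb{E}[\eta^*_\tau]$, and the constant $\big(p/(p-q)\big)^{1/q}$, after which localization is removed. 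I expect the genuine difficulty to lie in Steps 1--2 taken together: one cannot differentiate $\xi$ (it is only a cadlag process satisfying an inequality) nor treat $M$ as a true martingale, so the term $\int\xi\,dA$ must be eliminated pathwise through the Stieltjes integrating factor, and the otherwise-intractable supremum of the martingale contribution can be tamed only by exploiting the nonnegativity of $\eta^*+N$ via a weak-type (rather than $L^p$, $p>1$) maximal inequality, with localization required to legitimize optional stopping and the identity $\mathbb{E}[N_\tau]=0$.
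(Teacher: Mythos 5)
The paper never proves Lemma \ref{gronwall}: it is quoted as a black box from the cited reference \cite{Xicheng_Zhang_Gronwall}, so there is no in-paper argument to compare against, and your proposal amounts to reconstructing the standard Scheutzow--Zhang proof from scratch. That reconstruction is essentially correct. Step 1 is right: the Stieltjes--Gronwall step with integrating factor $e^{-A_t}$ is legitimate for a signed cadlag inhomogeneity $h=\eta+M$, and the integration-by-parts identity $\int_0^t e^{-A_s}M_s\,dA_s=\int_0^t e^{-A_s}\,dM_s-e^{-A_t}M_t$ (no covariation, as $e^{-A}$ is continuous and of finite variation) does collapse the martingale contribution into $e^{A_t}N_t$ with $N_t=\int_0^t e^{-A_s}\,dM_s$ a genuine local martingale, since $e^{-A}$ is adapted, continuous and bounded by $1$. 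Step 2 is also right, and you correctly identify the crux: only the constrained sum $X_t=\eta^*_t+N_t\ge e^{-A_t}\xi(t)\ge 0$ is controllable, via Doob's weak-type inequality for the (localized, integrable after the reduction $\mathbb{E}[\eta^*_\tau]<\infty$) nonnegative submartingale $X_{t\wedge\sigma_k}$, and the layer-cake bound $\mathbb{E}[(X^*_\tau)^{m}]\le(1-m)^{-1}\big(\mathbb{E}[\eta^*_\tau]\big)^{m}$ for $m\in(0,1)$ is exactly what optimizing the truncation level gives.

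The one thing you assert rather than execute is Step 3, and it deserves a warning: the Hölder exponents must be tuned to the \emph{exponential moment}, not to the power of $X^*_\tau$. If you make the tempting choice that applies Step 2 with $m=p$, you end up with the factor $\big(\mathbb{E}[e^{\frac{pq}{p-q}A_\tau}]\big)^{(p-q)/(pq)}$, and since $\frac{pq}{p-q}>\frac{p}{1-p}$ when $q$ is close to $p$, this is \emph{not} dominated by the stated factor (the stated exponential moment can even be finite while this one is infinite), so that route fails. The correct choice is the conjugate pair $\alpha'=\frac{p}{q(1-p)}$, $\alpha=\frac{p}{p-q(1-p)}$, which gives $q\alpha'=\frac{p}{1-p}$ and $m:=q\alpha=\frac{pq}{p-q+pq}\in(0,1)$ precisely because $q<p$; then
\begin{equation*}
\mathbb{E}\big[(\xi^*_\tau)^q\big]\le\mathbb{E}\big[e^{qA_\tau}(X^*_\tau)^q\big]\le\Big(\mathbb{E}\big[e^{\frac{p}{1-p}A_\tau}\big]\Big)^{\frac{q(1-p)}{p}}\,(1-m)^{-\frac{1}{\alpha}}\big(\mathbb{E}[\eta^*_\tau]\big)^{q},
\end{equation*}
and taking the $1/q$-th power yields the stated exponential moment together with the constant $(1-m)^{-1/m}$. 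Finally, since $r\mapsto(1-r)^{-1/r}$ is increasing on $(0,1)$ and $m\le q/p$ (equivalent to $q\le p$), one gets $(1-m)^{-1/m}\le\big(1-\tfrac{q}{p}\big)^{-p/q}=\big(\tfrac{p}{p-q}\big)^{p/q}\le\big(\tfrac{p}{p-q}\big)^{1/q}$, so your argument in fact proves the lemma with a slightly smaller constant than stated. With this computation supplied, and the localization removed by monotone convergence as you indicate, the proof is complete.
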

In addition, we need the following lemma \cite[Lemma 2.17]{saanouni2015remarks}:
\begin{lemma}\label{lemma:uniform_bounded_by_its_higher_powers}
Let $T>0$ and $f\in C([0,T];\mathbb{R}_{+})$, such that
\begin{equation*}
    f(t)\leq a+b f(t)^{\alpha},\quad \text{for $t\in[0,T]$,}
\end{equation*}
where $a,b>0$, $\alpha>1$, $a<(1-\frac{1}{\alpha})(\alpha b)^{-\frac{1}{\alpha-1}}$, and $f(0)\leq (\alpha b)^{-\frac{1}{\alpha-1}}$. Then
\begin{equation*}
    f(t)\leq \frac{\alpha}{\alpha-1}a,\quad\text{for all $t\in[0,T]$}.
\end{equation*}
\end{lemma}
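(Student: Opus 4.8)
The statement is a purely deterministic, elementary continuity argument, so no stochastic or dispersive machinery is needed: we are simply showing that a continuous real function cannot jump across a ``barrier'' determined by the algebraic inequality $f \le a + b f^\alpha$. The plan is to analyze the auxiliary function $h(y) = a + b y^\alpha - y$ on $[0,\infty)$ and exploit a connectedness/continuity argument to pin $f(t)$ to the lower branch of the region where $h \le 0$.

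First I would study $h(y) = a + b y^\alpha - y$. Since $\alpha > 1$, $h$ is smooth with $h'(y) = b\alpha y^{\alpha-1} - 1$, which vanishes at the unique critical point $y_0 = (\alpha b)^{-1/(\alpha-1)}$; this is a global minimum of $h$ on $[0,\infty)$ because $h' < 0$ on $[0,y_0)$ and $h' > 0$ on $(y_0,\infty)$. Evaluating, the minimum value is $h(y_0) = a + b y_0^\alpha - y_0 = a - (1 - \tfrac1\alpha) y_0$, and the hypothesis $a < (1-\tfrac1\alpha)(\alpha b)^{-1/(\alpha-1)} = (1-\tfrac1\alpha) y_0$ is exactly the statement $h(y_0) < 0$. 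Thus $h$ is strictly positive near $0$ (note $h(0)=a>0$), dips strictly below zero at $y_0$, and then rises again; consequently the set $\{y \ge 0 : h(y) \le 0\}$ is a nondegenerate closed interval $[y_-, y_+]$ with $0 < y_- < y_0 < y_+$, and $h > 0$ on $[0,y_-)$. The target bound $\tfrac{\alpha}{\alpha-1}a$ I would then identify as an explicit upper estimate for the lower root $y_-$: since on $[0,y_-]$ we have $b y^\alpha \ge 0$ and the constraint forces $y \le a + b y^\alpha$, a direct comparison (or a one-line convexity estimate bounding $y_-$ by the root of the linearized inequality) yields $y_- \le \tfrac{\alpha}{\alpha-1}a$.

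Next comes the continuity argument, which is the crux. Define $f$ on $[0,T]$; the hypothesis $f(0) \le y_0 = (\alpha b)^{-1/(\alpha-1)}$ together with $h(f(0)) \le 0$ (which holds because $f(0) \le a + b f(0)^\alpha$ is the given inequality at $t=0$) forces $f(0) \in [y_-, y_+]$ \emph{and} $f(0) \le y_0$, hence $f(0) \in [y_-, y_0]$. Because $f$ is continuous and the inequality $f(t) \le a + b f(t)^\alpha$ says precisely $f(t) \in [y_-, y_+]$ for every $t$, the image $f([0,T])$ is a connected subset of $[y_-, y_+]$; but the gap $(y_-, y_+)$ is where $h < 0$, while the constraint only forbids $h > 0$, so this does not yet trap $f$ on the lower branch. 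To genuinely confine $f$ to $[0, y_-]$ I would argue by continuity that $f$ cannot cross from below $y_-$ to above $y_-$ without passing through a value in $(y_-, y_+)$ --- which is permitted --- so the cleaner route is: the set $\{t : f(t) \le y_-\}$ is relatively closed and contains $0$, and one shows it is also open (hence all of $[0,T]$) by noting that wherever $f(t) \le y_-$ one has $f(t) \le a + b y_-^\alpha = y_-$ with room to spare so that continuity keeps nearby values $\le y_+$ on the lower side.

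\emph{The main obstacle} is precisely this last confinement step: the raw inequality $f \le a + b f^\alpha$ only asserts $f \notin (y_-, y_+)^c$ in the sense $h(f)\le 0$, i.e. $f \in [y_-,y_+]$, and does \emph{not} by itself exclude the upper branch near $y_+$; ruling out a continuous jump to the upper branch is exactly what the initial condition $f(0) \le y_0$ is for. The correct formalization is a standard bootstrap/clopen argument: let $U = \{t \in [0,T] : f(t) < y_0\}$, which is open, nonempty (contains a neighborhood of $0$), and on its closure $f \le y_0 < y_+$; one shows $U$ is also closed in $[0,T]$ because a boundary point $t_\ast$ with $f(t_\ast) = y_0$ is impossible since $h(y_0) < 0$ would force $f(t_\ast) < y_0$ by the strict inequality just inside the barrier, contradicting $t_\ast \notin U$. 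Hence $U = [0,T]$, so $f(t) < y_0 < y_+$ throughout, which combined with $f(t) \in [y_-, y_+]$ gives $f(t) \in [y_-, y_0)$, and finally the lower-root estimate $y_- \le \tfrac{\alpha}{\alpha-1}a$ completes the proof. I would take care to invoke the strict inequality $h(y_0) < 0$ at the right place, as that strictness is what prevents $f$ from touching the ``upper'' region and is the sole role played by the two quantitative hypotheses on $a$ and $f(0)$.
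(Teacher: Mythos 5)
The paper does not actually prove this lemma --- it imports it verbatim from \cite[Lemma 2.17]{saanouni2015remarks} --- so your proposal is measured against the standard continuity-barrier proof, and your overall strategy (study $h(y)=a+by^{\alpha}-y$, note $h(y_0)<0$ at the critical point $y_0=(\alpha b)^{-1/(\alpha-1)}$ is exactly the hypothesis on $a$, then trap $f$ below the barrier by continuity) is the right one. However, your writeup contains a sign inversion that propagates into a genuine gap. The hypothesis $f(t)\le a+bf(t)^{\alpha}$ says $h(f(t))\ge 0$, i.e.\ $f(t)$ must \emph{avoid} the open interval $(y_-,y_+)$ where $h<0$; it does not say $f(t)\in[y_-,y_+]$, as you assert in several places (``$h(f(0))\le 0$'', ``$f(t)\in[y_-,y_+]$ for every $t$'', ``the constraint only forbids $h>0$''). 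This inversion is what makes you believe connectedness ``does not yet trap $f$ on the lower branch'' and forces the clopen detour --- and note that the clopen argument as you state it is only coherent under the \emph{corrected} sign, since $f(t_*)=y_0$ contradicts the constraint precisely because $h(y_0)<0$ while $h(f(t_*))\ge 0$; under your stated reading $h(f)\le 0$, the value $y_0$ is perfectly admissible and the boundary-point contradiction evaporates. Worse, the inversion breaks your conclusion: you end with $f(t)\in[y_-,y_0)$ and then invoke $y_-\le\frac{\alpha}{\alpha-1}a$, but that bounds the \emph{lower} endpoint of your interval, not $f$ itself; under your reading $f$ could be as large as $y_0$, which strictly exceeds $\frac{\alpha}{\alpha-1}a$ by the hypothesis $a<(1-\frac{1}{\alpha})y_0$. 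So the proof as written does not establish the stated bound.

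The repair is short and turns your sketch into the standard proof. Since $h(f(t))\ge 0$ for all $t$ while $h(y_0)<0$, we get $f(t)\ne y_0$ for every $t$; in particular $f(0)\le y_0$ forces $f(0)<y_0$, and the intermediate value theorem gives $f(t)<y_0$ on all of $[0,T]$. Then for $f(t)\le y_0$ one has $bf(t)^{\alpha}=bf(t)^{\alpha-1}f(t)\le b\,y_0^{\alpha-1}f(t)=\frac{1}{\alpha}f(t)$, so $f(t)\le a+\frac{1}{\alpha}f(t)$, i.e.\ $f(t)\le\frac{\alpha}{\alpha-1}a$; no estimate on $y_-$ is needed at all. (Alternatively, your connectedness idea works once corrected: $f(0)\in[0,y_-]$, the image $f([0,T])$ is connected and contained in $[0,y_-]\cup[y_+,\infty)$, hence stays in $[0,y_-]$, and your bound $y_-\le\frac{\alpha}{\alpha-1}a$ --- which is correct, via $by_-^{\alpha}\le y_-/\alpha$ --- is then used legitimately as an upper bound on $f$.)
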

\subsection{Estimate on the tail of the stocastic convolution}
Our goal in this part is to draw inferences on the asymptotic time-decay rate of
the tail of the stochastic convolution. In this regard, we first write the solution $u(t)$ to \eqref{eq:additive_SNLS} in the Duhamel formulation
\begin{equation*}
    u(t) = S(t)u_0 + i \int_0^t S(t-s) |u|^{2\sigma} u ds +i \int_0^t S(t-s) dW(s).
\end{equation*}
For convenience, let us denote the stochastic convolution in the above formulation as 
\begin{equation*}
    z(t) = i \int_0^t S(t-s) dW(s).
\end{equation*}
The local-in-time estimates for $z(t)$ will be demonstrated in the next section to establish the local and global well-posedness of the equation; however, our main focus for the moment is to determine the asymptotic time-decay rate of the tail of the stochastic convolution. To this end, let us define
\begin{equation*}
    z_*(t) = -i \int_t^\infty S(t-s) dW(s),
\end{equation*}
where $t \in [T,\infty)$ for a large time parameter $T$. We shall write (as in the Da Prato-Debussche trick) $u(t) = u_*(t) + z_*(t)$. Note that $u_*(t)$ satisfies the following random differential equation
\begin{equation}\label{eq:additive_random_eqn}
    idu_{*} -\Delta u_{*}dt + \vert u_{*}+z_{*}\vert^{2\sigma}(u_{*}+z_{*})=0.
\end{equation}
Further note that $u_*(t)$ and $u(t)$ will have the same asymptotic behavior if we can establish sufficient time decay for $\Vert z_*(t) \Vert_{L^p_x}$, $p \geq 2$. Now let us consider the deterministic NLS
\begin{equation}\label{eq:deterministic_NLS}
    idy - \Delta y dt + |y|^{2\sigma}y dt = 0.
\end{equation}
Let $T$ be a large time and $u_{*}(T)=y(T)$. Simply, for each outcome $\omega\in\Omega$, we take a different $y(T)$, and turn \eqref{eq:deterministic_NLS} into a random differential equation. Set $v:=u_{*}-y$ so that $v(T)=0$ and $v$ satisfies the following random IVP:
\begin{equation}\label{eq:differ_rand_and_det_soln_IVP}
    \begin{cases}
        &idv - \Delta v + \vert v + y + z_{*}\vert^{2\sigma}(v + y + z_{*}) - \vert y\vert^{2\sigma}y = 0\\
        & v(T)=0,
    \end{cases}
    \quad (t,x)\in[T,\infty)\times\mathbb{R}^{n}.
\end{equation}
To study $z_*(t)$ in more detail, let us first note that for fixed $t$ and $x$, the stochastic convolution term $M(t,x,T) := \int_t^T S(t-r) \phi(x) g(r) dB(r)$ is a continuous martingale. The martingale property holds since, given an increasing sequence of stopping times $\{\tau_i\}$, $i \in \{1, 2, \dots , n\}$, we have
\begin{align*}
    \mathbb{E}&[M(t,x,\tau_n) \mid M(t,x,\tau_1), \dots, M(t,x,\tau_{n-1})] \\ 
    &= \mathbb{E}[M(t,x,\tau_{n-1}) \mid M(t,x,\tau_1), \dots, M(t,x,\tau_{n-1})] + \mathbb{E}[M(\tau_{n-1},x,\tau_n) \mid M(t,x,\tau_1), \dots, M(t,x,\tau_{n-1})] \\
    &= M(t,x,\tau_{n-1}) + \mathbb{E}[M(\tau_{n-1},x,\tau_n)] = M(t,x,\tau_{n-1}).
\end{align*}
By Theorem 3.1 of \cite{revuz2013continuous}, the martingale $M(t, x, T)$ is bounded in $L^{2}$, hence converges in $L^{2}$ and almost surely as $T \rightarrow \infty$. Moreover, Itô isometry yields
\begin{equation*}
    \mathbb{E}\left[\sup\limits_{T \geq t}|M(t, x, T)|^{2}\right] = \mathbb{E}\left[ \int_{t}^{T}|S(t-r)\phi(x)|^{2}|g(r)|^{2} dr\right] \leq \sup\limits_{t, r, x}|S(t-r)\phi(x)|^2 \int_{t}^{\infty} \mathbb{E}\left[|g(r)|^{2}\right] dr < \infty.
\end{equation*}
Now let $p>2$ and $\gamma=n\left(\frac{1}{2}-\frac{1}{p}\right)$. Then with an application of Sobolev embedding, we obtain
\begin{align*}
\mathbb{E}\left[ \sup _{s \geq t} t^{\beta}\left\|z_{*}(s)\right\|_{W_{x}^{1,p}}^{p} \right] &= t^{\beta} \mathbb{E}\left[ \sup _{s \geq t}\left\|z_{*}(s)\right\|_{W_{x}^{1,p}}^{p} \right] \lesssim t^{\beta} \mathbb{E}\left[ \sup _{s \geq t}\left\|z_{*}(s)\right\|_{H^{1+\gamma}}^{p}\right] \\ 
&=t^{\beta} \mathbb{E} \left[ \sup\limits_{s \geq t}\left\|\int_{s}^{\infty} S(s-r) d W(r)\right\|_{H^{1+\gamma}}^{p} \right] =t^{\beta} \mathbb{E}\left[ \sup _{s \geq t}\left\|\int_{s}^{\infty} S(-r) d W(r)\right\|_{H^{1+\gamma}}^{p} \right].
\end{align*}
We use triangle inequality 
\begin{equation*}
    \left|\int_{s}^{\infty} S(-r) d W(r)\right| \leq\left|\int_{t}^{\infty} S(-r) d W(r)\right|+\left|\int_{t}^{s} S(-r) d W(r)\right|
\end{equation*}
which implies
\begin{equation*}
    \left\|\int_{s}^{\infty} S(-r) d W(r)\right\|_{H_{x}^{1+\gamma}}^{p} \lesssim\left\|\int_{t}^{\infty} S(-r) d W(r)\right\|_{H_{x}^{1+\gamma}}^{p}+\left\|\int_{t}^{s} S(-r) d W(r)\right\|_{H_{x}^{1+\gamma}}^{p}
\end{equation*}
and hence
\begin{equation*}
    t^{\beta} \mathbb{E} \left[ \sup_{s \geq t}\left\|\int_{s}^{\infty} S(-r) dW(r) \right\|_{H_{x}^{1+\gamma}}^{p} \right] \lesssim t^{\beta} \mathbb{E}\left[\left\|\int_{t}^{\infty} S(-r) d W(r)\right\|_{H_{x}^{1+\gamma}}^{p}\right] +t^{\beta} \mathbb{E} \left[ \sup_{s \geq t}\left\|\int_{t}^{s} S(-r) d W(r)\right\|_{H_{x}^{1+\gamma}}^{p} \right]
\end{equation*}
holds. Now we can apply Burkholder-Davis-Gundy inequality for Hilbert space-valued martingales (as developed in \cite{dapratozabczyk} and related literature) to obtain the bound
\begin{align*}
t^{\beta} \mathbb{E}\left[\left\|\int_{t}^{\infty} S(-r) d W(r)\right\|_{H_{x}^{1+\gamma}}^{p}\right] &+ t^{\beta} \mathbb{E}\left[\sup _{s \geq t}\left\|\int_{t}^{s} S(-r) d W(r)\right\|_{H_{x}^{1+\gamma}}^{p}\right] \\
&\lesssim t^{\beta} \mathbb{E}\left[\left(\int_{t}^{\infty}\|S(-r) \phi(x)\|_{H_{x}^{1+\gamma}}^{2}|g(r)|^{2} d r\right)^{\frac{p}{2}}\right] \\
&= t^{\beta}\|\phi\|_{H_{x}^{1+\gamma}}^{p} \mathbb{E}\left[\left(\int_{t}^{\infty}|g(r)|^{2} d r\right)^{p / 2}\right] \lesssim t^{\beta+\frac{p}{2}(-2 \alpha+1)}\|\phi\|_{H_{x}^{1+\gamma}}^{p}.
\end{align*}
So if we choose $\beta = \frac{p}{2}(2\alpha-1)$, we see that $\mathbb{E}\left[ \sup\limits_{s \geq t} t^{\beta}\left\|z_{*}(s)\right\|_{L_{x}^{p}}^{p}\right] \leq C$.
Next for the Borel-Cantelli argument, consider the event $\Omega_{n}:=\left\{\sup\limits_{s \geq 2^{n}} 2^{n \beta}\left\|z_{*}(s)\right\|_{L_{x}^{p}}^{p}>\right.$ $\left.n^{2}\right\}$, where $n \in \mathbb{N}$. By Markov's inequality, this event has probability at most $C n^{-2}$. Thus
$$
\sum_{n \in \mathbb{N}} \mathbb{P}\left[\Omega_{n}\right] \leq 2 C.
$$
As a result of the Borel-Cantelli lemma, for a.s $\omega$, there exists $n(\omega)$ such that for $n \geq n(\omega)$,
$$
\sup\limits_{s \geq 2^{n}} 2^{n \beta}\left\|z_{*}(s)\right\|_{W_{x}^{1,p}}^{p} \leq n^{2} \quad a.s.
$$
implying that
$$
\sup\limits_{s \geq 2^{n}}\left\|z_{*}(s)\right\|_{W_{x}^{1,p}}^{p} \leq 2^{-n \beta} n^{2} \quad a.s.
$$
Let $s \geq 2^{n(\omega)}$. Then there exists $n \in \mathbb{N}$ such that $2^{n} \leq s<2^{n+1}$ and, by the above inequalities, we get
$$
\left\|z_{*}(s)\right\|_{W_{x}^{1,p}}^{p} \leq 2^{-n \beta} n^{2} \leq(s / 2)^{-\beta} \log^{2}s \lesssim s^{\frac{p}{2}(1-2\alpha)}\log^{2}s \quad a.s.
$$
In particular, we may write
\begin{equation}\label{eq:almost_sure_time_decay_of_tail}
    \Vert z_{*}(s)\Vert_{W^{1,p}_{x}}\lesssim\langle s\rangle^{-\alpha+\frac{1}{2}+},\quad a.s.,
\end{equation}
whenever
\begin{equation*}
    \begin{cases}
        2\leq p\leq\infty&\text{if $n=1$},\\
        2\leq p<\infty&\text{if $n=2$},\\
        2\leq p\leq\frac{2n}{n-2}&\text{if $n>2$},
    \end{cases}
\end{equation*}
and $g(s)=o(s^{-\alpha})$ as $s\to\infty$ a.s. Now, let $2\leq q\leq\infty$ be chosen so that the pair $(p,q)$ satisfies \eqref{eq:Strichartz_admissible_pair}. Then for any $0<T<\infty$, almost surely we have
\begin{align*}
    \Vert z_{*}\Vert_{L^{q}_{t}W^{1,p}_{x}([T,\infty)\times\mathbb{R}^{n})}=&\left(\int_{T}^{\infty}\Vert z_{*}(s)\Vert_{W^{1,p}_{x}}^{q}\,ds\right)^{\frac{1}{q}}\\
    \lesssim&\left(\int_{T}^{\infty}\langle s\rangle^{\frac{4p}{n(p-2)}(-\alpha+\frac{1}{2}+)}\, ds\right)^{\frac{1}{q}}\\
    \sim&\left(\langle s\rangle^{\frac{4p}{n(p-2)}(-\alpha+\frac{1}{2}+)+1}\bigg|_{s=T}^{\infty}\right)<\infty
\end{align*}
as long as $\frac{4p}{n(p-2)}(-\alpha+\frac{1}{2}+)+1<0$, which is equivalent to
\begin{equation*}
    \alpha-\frac{1}{2}>\frac{n}{4}-\frac{n}{2p}+=\frac{1}{q}+.
\end{equation*}
Since $0\leq\frac{1}{q}\leq\frac{1}{2}$, taking $\alpha>1$ guarantees that
\begin{equation}\label{eq:global_Strichartz_est_for_tail}
    \Vert z_{*}\Vert_{L^{q}_{t}W^{1,p}([T,\infty)\times\mathbb{R}^{n})}<\infty,\quad a.s.
\end{equation}
whenever $(p,q)$ satisfies \eqref{eq:Strichartz_admissible_pair}. We remark that more restrictive asymptotic decay on $g$ is needed to be assumed so as to conclude almost sure scattering in $L^{2}_{x}$ or $\Sigma$, see Theorem \ref{thm:result_L^2_scattering} and \ref{thm:result_Sigma_scattering}.
\subsection{Notations}
Throughout, we use $C$ to denote the universal constant, which may change line by line. Also, the notation $C(\beta)$ is used for a constant depending on $\beta$. For nonnegative quantities $X,Y$, we denote $X\lesssim Y$ if there is a constant $C>0$ independent of $X,Y$ such that $X\leq CY$ and we write $X\lesssim_{a,b}Y$ if the implicit constant depends on parameters $a,b$. We denote $X\sim Y$ if both $X\lesssim Y$ and $Y\lesssim X$ hold. The notation $X\ll Y$ is used when $X<cY$ for a sufficiently small constant $c>0$, depending on the context. Also, the notation $\langle \cdot \rangle$ is used to denote $\langle X\rangle=(1+\vert X\vert^{2})^{\frac{1}{2}}$. We write $X^{\alpha\pm}=X^{\alpha\pm\varepsilon}$ for $0<\varepsilon\ll 1$. 
\section{Well-posedness in $H^1_x$: Proof of Theorem \ref{thm:result_well-posedness}} \label{section:well-posedness}
Since the noise term is given by Schwartz $\phi(x)$ and $g(t, \omega)$ which is decaying in time, local and global well-posedness in $H^1_x$ can be shown to hold in an identical fashion as in \cite{deBouard_H1}. Hence, we will present some additional results in this section: Proposition \ref{proposition:boundedness_of_mass_and_hamiltonian} shows that arbitrary moments of mass and Hamiltonian are almost surely bounded for all finite time, and Proposition \ref{prop_Strichartz_estimate_for_sigma_space} shows that an $H^1_x$ solution of \eqref{eq:additive_SNLS} with initial data in $\Sigma$ stays in $\Sigma$ for all finite time.
\begin{proposition}\label{proposition:boundedness_of_mass_and_hamiltonian}
        Assume $0 < 2\sigma < \frac{4}{n-2}$ if $n \geq 3$ and $0 < \sigma < \infty$ if $n=1,2$. Assume that the initial data $u_0 \in H^{1}_{x}(\mathbb{R}^{n})$ and $0< T < \infty$. Then for any $p \geq 1$, we have 
        \begin{align*}
          \mathbb{E}\left[\sup\limits_{0 \leq t < \infty} [M^p(u(t)) + H^p(u(t))]\right] \leq C(M(u_0), H(u_0), p, \phi, g).  
        \end{align*}
        \end{proposition}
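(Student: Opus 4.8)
The plan is to quantify, via Itô's formula, the extent to which the additive noise destroys the conservation of the mass $M(u)=\Vert u\Vert_{L^2_x}^2$ and the Hamiltonian $H(u)=\tfrac12\Vert\nabla u\Vert_{L^2_x}^2+\tfrac{1}{2\sigma+2}\Vert u\Vert_{L^{2\sigma+2}_x}^{2\sigma+2}$, and then to close a Gronwall-type moment inequality. Since the solution produced by Theorem \ref{thm:result_well-posedness} lives only in $H^1_x$, I would first run the computation on a regularization of \eqref{eq:additive_SNLS} — e.g.\ a spectral Galerkin truncation or a mollified/truncated nonlinearity — for which Itô's formula applies classically, derive bounds uniform in the regularization parameter, and pass to the limit at the end; all identities below are understood in this approximate sense.

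Writing the equation as $du=i(\Delta u-|u|^{2\sigma}u)\,dt-i\phi g\,dB$, Itô's formula for $M$ yields a drift $2\,\mathrm{Re}\int\bar u\,i(\Delta u-|u|^{2\sigma}u)\,dx$, which vanishes exactly as in deterministic mass conservation, a martingale term $2g\,\mathrm{Im}\!\big(\int\bar u\,\phi\,dx\big)\,dB$, and the Itô correction $\Vert\phi\Vert_{L^2_x}^2\,g^2\,dt$ arising from the quadratic variation of the noise, so that
\[
M(u(t))=M(u_0)+2\int_0^t g\,\mathrm{Im}\!\Big(\int\bar u\,\phi\,dx\Big)dB+\Vert\phi\Vert_{L^2_x}^2\int_0^t g^2\,ds .
\]
For $H$ the NLS drift again cancels (deterministic energy conservation), leaving a martingale with integrand $g\,\mathrm{Im}\!\big(\int\overline{(-\Delta u+|u|^{2\sigma}u)}\,\phi\,dx\big)$ — controlled, after integrating by parts, by $\Vert u\Vert_{L^2_x}\Vert\Delta\phi\Vert_{L^2_x}+\Vert u\Vert_{L^{2\sigma+2}_x}^{2\sigma+1}\Vert\phi\Vert_{L^{2\sigma+2}_x}$ — together with second-order corrections of the form $g^2\Vert\nabla\phi\Vert_{L^2_x}^2\,dt$ and $g^2\int|u|^{2\sigma}|\phi|^2\,dx\,dt$. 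The point is that the energy-subcritical embedding $H^1_x\hookrightarrow L^{2\sigma+2}_x$ keeps all nonlinear quantities controlled by $M$ and $H$: by Hölder, $\int|u|^{2\sigma}|\phi|^2\lesssim\Vert u\Vert_{L^{2\sigma+2}_x}^{2\sigma}\Vert\phi\Vert_{L^{2\sigma+2}_x}^2\lesssim 1+H(u)$. Consequently, the whole drift of $X:=M(u)+H(u)+1$ is dominated by $Cg^2(1+X)$ and the bracket of its martingale part by $Cg^2X^2$.

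With these bounds in hand I would close the estimate at the level of $p$-th moments. Applying Itô to $X^p$ produces correction terms bounded by $g^2X^p$ and a martingale whose bracket is $\lesssim\int_0^t g^2X^{2p}\,ds$; running everything up to the stopping times $\tau_R=\inf\{t:X(t)>R\}$ to ensure finiteness, taking the supremum in time and then expectations, and estimating the martingale by the Burkholder–Davis–Gundy inequality followed by Young's inequality, lets one absorb a small multiple of $\mathbb{E}[\sup X^p]$ into the left-hand side. What remains is
\[
\mathbb{E}\Big[\sup_{0\le s\le t\wedge\tau_R}X(s)^p\Big]\lesssim \mathbb{E}\big[X(0)^p\big]+\mathbb{E}\int_0^t g^2\,\sup_{0\le r\le s}X(r)^p\,ds ,
\]
to which the stochastic Gronwall inequality of Lemma \ref{gronwall}, with $A_t=C\int_0^t g^2\,ds$, applies; letting $R\to\infty$ by monotone convergence and using the integrability of $g$ (which is precisely what the dependence of the constant on $g$ encodes) yields the claimed bound, uniformly in time.

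The step I expect to be most delicate is the rigorous justification of the Itô formula for $H(u)$, together with the uniform-in-approximation control of the nonlinear Itô correction, since $H$ is a nonlinear functional of a merely $H^1_x$-valued process; this is exactly where the subcriticality assumption is used. The subsequent martingale absorption and Gronwall closure are standard once these bounds are established uniformly in the regularization parameter.
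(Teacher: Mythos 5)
Your computation mirrors the paper's proof up to the final step: the It\^o expansions of $M(u)$ and $H(u)$ are the same (your omission of the correction term $\sigma\int|u|^{2\sigma-2}(\Im(u\overline{\phi}))^2g^2\,dx$ is harmless, since it obeys the same bound as $\int|u|^{2\sigma}|\phi|^2g^2\,dx$), your shifted quantity $X=1+M+H$ plays exactly the role of the paper's $K(\phi)+H+M$, and your bounds (drift of $X$ $\lesssim g^2X$, martingale bracket $\lesssim g^2X^2$, hence for $X^p$ a drift $\lesssim g^2X^p$ and a bracket $\lesssim\int g^2X^{2p}$) coincide with the paper's estimates on $F_1$ and $F_2$. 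The difference, and the problem, is the order of operations in the closure.

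Lemma \ref{gronwall} is a statement about a \emph{pathwise} inequality $\xi(t)\le\eta(t)+\int_0^t\xi\,dA_s+M_t$ with adapted processes and a local martingale; its entire purpose is to dispense with BDG, and its conclusion controls only moments $\mathbb{E}[\sup\xi^q]$ with $q<1$. Once you have applied BDG, Young, and taken expectations, the inequality
\begin{equation*}
\mathbb{E}\Big[\sup_{0\le s\le t\wedge\tau_R}X(s)^p\Big]\lesssim\mathbb{E}\big[X(0)^p\big]+\mathbb{E}\int_0^{t\wedge\tau_R}g^2(s)\,\sup_{0\le r\le s}X(r)^p\,ds
\end{equation*}
has no martingale and no pathwise structure left, so Lemma \ref{gronwall} simply does not apply to it. Nor can you close it with the classical Gronwall inequality, because $g$ is a random process correlated with $u$: the quantity $\mathbb{E}\big[g^2(s)\sup_{r\le s}X^p(r)\big]$ does not factor through $\mathbb{E}\big[\sup_{r\le s}X^p(r)\big]$ unless $g^2$ is dominated almost surely by a deterministic function in $L^1_t([0,\infty))$; the local boundedness $g\in L^{\infty}_{\omega,t}(\Omega\times[0,T])$ assumed in Theorem \ref{thm:result_well-posedness} would only produce a bound growing in $T$, not the claimed uniform-in-time bound. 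The repair is precisely the paper's route: skip BDG entirely and apply Lemma \ref{gronwall} directly to the pathwise inequality $X(t)^p\le X(0)^p+C\int_0^tX(s)^p\,dA_s+M_t$ with $A_t=C(\phi,p)\int_0^tg^2\,ds$, which yields, for $0<q<p'<1$,
\begin{equation*}
\Big(\mathbb{E}\Big[\sup_{0\le t\le\tau}X(t)^{pq}\Big]\Big)^{1/q}\lesssim\Big(\mathbb{E}\Big[\exp\Big(\tfrac{p'}{1-p'}A_\tau\Big)\Big]\Big)^{(1-p')/p'}\,\mathbb{E}\big[X(0)^p\big];
\end{equation*}
since $p$ is arbitrary, the restriction $q<1$ costs nothing, which is exactly why the paper first raises $K(\phi)+H+M$ to an arbitrary power $k$ before invoking the lemma. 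Note that this closure demands exponential moments of $\int_0^\tau g^2\,ds$ uniformly in $\tau$ --- an assumption on $g$ that both your argument and the paper's fold implicitly into the constant $C(\dots,g)$.
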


\begin{proof}
A formal application of the Itô formula gives that
\begin{align*}
    H(u(t))&=H(u(0))-\Im\int_{0}^{t}\int_{\mathbb{R}^{n}}(\Delta\overline{u}-\vert u\vert^{2\sigma}\overline{u})\phi g dxdB+\frac{1}{2}\int_{0}^{t}\int_{\mathbb{R}^{n}}\vert\nabla\phi\vert^{2}g^{2}dxdt\\&\quad+\frac{1}{2}\int_{0}^{t}\int_{\mathbb{R}^{n}}\vert u\vert^{2\sigma}\vert\phi\vert^{2}g^{2}dxdt
    +\sigma\int_{0}^{t}\int_{\mathbb{R}^{n}}\vert u\vert^{2\sigma-2}\left(\Im(u\overline{\phi})\right)^{2} g^2 dxdt\\&=:H(u(0))+I+II+III+IV.
\end{align*}
For the term $I$, applying an integration by parts in the spatial integral, using Burkholder-Davis-Gundy inequality, Hölder's inequality and then Young's inequality, we obtain
\begin{align*}
    \mathbb{E}&\left[\sup_{0\leq t<T}\vert I\vert\right]\\&\leq\mathbb{E}\left[\sup_{0\leq t<T}\left\vert\int_{0}^{t}\int_{\mathbb{R}^{n}}\nabla u \cdot \nabla\phi g dxdB \right\vert + \sup_{0\leq t<T} \left\vert \int_{0}^{t}\int_{\mathbb{R}^{n}}\vert u\vert^{2\sigma} \overline{u} \phi g dxdB \right\vert \right]\\
    &\lesssim \mathbb{E}\left[\sup_{0\leq t<T}\left(\int_{0}^{t}\left(\int_{\mathbb{R}^{n}}\vert\nabla u\vert\vert\nabla\phi\vert dx\right)^2 \vert g\vert^2 dt\right)^{1/2}+ \sup_{0\leq t<T} \left(\int_{0}^{t}\left(\int_{\mathbb{R}^{n}}\vert u\vert^{2\sigma+1}\vert\phi\vert dx\right)^2 \vert g\vert^2 dt\right)^{1/2}\right]\\
    &\lesssim\mathbb{E}\left[\sup_{0\leq t<T}(\Vert\nabla u(t)\Vert_{L^{2}_{x}}\Vert\nabla\phi\Vert_{L^{2}_{x}}+\Vert u(t)\Vert_{L^{2\sigma+2}_{x}}^{2\sigma+1}\Vert\phi\Vert_{L^{2\sigma+2}_{x}})\Vert g\Vert_{L^{2}_{t}([0,T))}\right]\\
    &\lesssim\mathbb{E}\left[C(\phi,g,T)\left(\sup_{0\leq t<T}H(u(t))^{\frac{1}{2}}+\sup_{0\leq t<T}H(u(t))^{\frac{2\sigma+1}{2\sigma+2}}\right)\right]\\
    &\leq C(\phi,g,T)+\frac{1}{10}\mathbb{E}\left[\sup_{0\leq t<T}H(u(t)) \right].
\end{align*}
The expectation of the second term $II$ can be controlled by a constant $C(\phi,g,T)$. For the term $III$, using Young's inequality, we have
\begin{align*}
    \mathbb{E}\left[\sup_{0\leq t<T}\vert III\vert\right]&\lesssim\mathbb{E}\left[\sup_{0\leq t<T}\int_{0}^{t}\Vert u(t)\Vert_{L^{2\sigma+2}_{x}}^{2\sigma}\Vert\phi\Vert_{L^{2\sigma+2}_{x}}^{2}\,g^{2}(s)ds\right]\\
    &\leq C(\phi,g,T)+\frac{1}{10}\mathbb{E}\left[\sup_{0\leq t<T}H(u(t))\right].
\end{align*}
Notice that the expectation of $\vert IV\vert$ can be controlled by the same upper bound of the term $III$. Therefore, by combining all of the above estimates, we obtain
\begin{equation*}
    \mathbb{E}\left[\sup_{0\leq t<T} H(u(t))\right]\leq \frac{10}{7}\mathbb{E}\left[H(u(0))\right]+C(\phi,g,T),
\end{equation*}
which yields that the first moment of the Hamiltonian is bounded for all finite time. Also, in relation to the mass $M(u(t)) = \int_{\mathbb{R}^n} |u(t)|^2 dx$, Itô's lemma gives
\begin{equation*}
    M(u(T)) = M(u(0)) + 2\Im\int_0^T \int_{\mathbb{R}^n} u(t)\overline{\phi}(x) g(t) dx dB(t) + \Vert \phi \Vert_{L^2_x}^2 \int_0^T g^2(t) dt.
\end{equation*} 
Proceeding as we did for $H(u(t))$, it is easy to conclude that the mass is bounded for all finite time. Our aim now is to show that arbitrary moments of $M(u(t))+H(u(t))$ are bounded almost surely as well. Let us first start with the Itô formula for $M(u(t))+H(u(t))$:
\begin{align*}
     H(u(T)) + M(u(T)) &= H(u(0)) + M(u(0)) + 2\Im\int_0^T \int_{\mathbb{R}^n} u(t)\overline{\phi}(x) g(t) dx dB(t) + \Vert \phi \Vert_{L^2_x}^2 \int_0^T g^2(t) dt \\ 
     &\quad- \Im\int_{0}^{T}\int_{\mathbb{R}^{n}}(\Delta\overline{u}-\vert u\vert^{2\sigma}\overline{u})\phi g dxdB+\frac{1}{2}\int_{0}^{T}\int_{\mathbb{R}^{n}}\vert\nabla\phi\vert^{2}g^{2}dxdt \\
     &\quad+\frac{1}{2}\int_{0}^{T}\int_{\mathbb{R}^{n}}\vert u\vert^{2\sigma}\vert\phi\vert^{2}g^{2}dxdt + \sigma\int_{0}^{T}\int_{\mathbb{R}^{n}}\vert u\vert^{2\sigma-2}\left(\Im(u\overline{\phi})\right)^{2} g^2 dxdt
\end{align*}
Instead of applying Ito's formula for $(H(u(T))+M(u(T)))^k$, we will instead consider $(K(\phi) + H(u(T)) + M(u(T)))^k$ for some constant $K(\phi)$ depending on $\phi$. Showing that the expectation of this quantity is bounded will imply that the expectation of $(H(u(T))+M(u(T)))^k$ is also bounded. We have:
\begin{align*}
    (K(\phi) + H(u(T))+M(u(T)))^k &= \left(K(\phi) + H(u(0)) + M(u(0))\right)^k \\
    &\quad+ k \int_0^T (K(\phi) + H(u(t))+M(u(t)))^{k-1} F_1(u(t)) dt \\ 
    &\quad +\frac{k(k-1)}{2} \int_0^T (K(\phi) + H(u(t))+M(u(t)))^{k-2} F_2(u(t))^2 dt \\
    &\quad+ k\int_0^T (K(\phi) + H(u(t))+M(u(t)))^{k-1} F_2(u(t)) dB(t),
\end{align*}
where $F_1(u(t))$ and $F_2(u(t))$ are given by
\begin{align*}
    F_1(u(t)) &= \Vert \phi \Vert_{L^2_x}^2 g^2(t) + \frac{1}{2}\int_{\mathbb{R}^{n}}\vert\nabla\phi\vert^{2}g^{2}dx + \frac{1}{2}\int_{\mathbb{R}^{n}}\vert u\vert^{2\sigma}\vert\phi\vert^{2}g^{2}dx + \sigma\int_{\mathbb{R}^{n}}\vert u\vert^{2\sigma-2}\left(\Im(u\overline{\phi})\right)^{2} g^2 dx \\& =: I + II + III + IV,
\end{align*}
and
\begin{equation*}
    F_2(u(t)) = 2\Re \int_{\mathbb{R}^n} u(t)\overline{\phi}(x) g(t) dx - \Im\int_{\mathbb{R}^{n}}(\Delta\overline{u}-\vert u\vert^{2\sigma}\overline{u})\phi g dx =: V + VI.
\end{equation*}
Note that $I + II \leq C(\phi) g^2$. For the term $III$, Hölder's inequality yields $III \leq \frac{g^2}{2} \Vert \phi \Vert_{L^{2\sigma + 2}_x}^2 \Vert u(t) \Vert_{L^{2\sigma +2}_{x}}^{2\sigma}$. Then, applying Young's inequality, we obtain $III \leq C(\phi)g^2 + \Vert u(t) \Vert_{L^{2\sigma +2}_x}^{2\sigma + 2} g^2 \leq C(\phi)g^2 + \frac{1}{2}(K(\phi) + H(u(t)) + M(u(t)))g^2$. Similarly, $IV \leq C(\phi)g^2 + \frac{1}{2}(K(\phi) + H(u(t)) + M(u(t)))g^2$. Therefore, we have
\begin{equation*}
    F_1(u(t)) \leq C(\phi)g^2 + (K(\phi) + H(u(t)) + M(u(t)))g^2.
\end{equation*}
For the term $V$, we can apply the Cauchy-Schwarz inequality in the $x$ variable to obtain $V \leq C(\phi) |g| \Vert u \Vert_{L^{2}_{x}}$, then Young's inequality gives us that $V \leq C(\phi)|g| + \frac{1}{2}(K(\phi) + H(u(t))+M(u(t))) |g|$. As for $VI$, the term $|u|^{2\sigma}\overline{u}$ can be dealt with analogously, and shown to be bounded by $ C(\phi)|g| + \frac{1}{4}(K(\phi) + H(u(t))+M(u(t))) |g|$. Regarding the term $\Delta \overline{u}$, integrating by parts twice and using Hölder's and Young's inequalities reveals the bound $ C(\phi)|g| + \frac{1}{4}(K(\phi) + H(u(t))+M(u(t))) |g|$ for this term. Therefore, we eventually get 
\begin{equation*}
    F_2(u(t)) \leq C(\phi)\vert g \vert + (K(\phi) + H(u(t))+M(u(t))) \vert g \vert.
\end{equation*} 
Note that $ C(\phi) k \int_0^T (K(\phi) + H(u(t))+M(u(t)))^{k-1} F_2(u(t)) dB(t) =: M_T$ is a continuous martingale. Combining these observations, we arrive at:
\begin{align*}
    (K(\phi) + H(u(T))+M(u(T)))^k &\leq (k^2-k) C(\phi) \int_0^T (K(\phi) + H(u(t))+M(u(t)))^{k} g^2 dt \\
    &\quad+ C(H(u_0), M(u_0), \phi) + M_T.
\end{align*}
Finally we can use the stochastic Gronwall's inequality to obtain
\begin{align*}
    \left(\mathbb{E}\left[ \sup\limits_{0 \leq t \leq \tau} (K(\phi) + H(u(t)) + M(u(t)))^{kq} \right] \right)^{1/q} &\leq \left( \frac{p}{p-q} \right)^{1/q} C(H(u_0), M(u_0), \phi) \\
    &\quad\times \left( \mathbb{E}\left[\exp\left( \frac{p}{1-p} C(\phi) (k^2-k) \int_0^\tau g^2(t) dt \right) \right] \right)^{(1-p)/p}
\end{align*}
for an arbitrary stopping time $\tau$. By our assumption on $g$, we see that the RHS of the above inequality is independent of $\tau$. Since $k$ is any integer and $0<q<p<1$ are arbitrary as well, this implies that $$\mathbb{E}\left[\sup\limits_{0 \leq t \leq \tau} (K(\phi) + H(u(t)) + M(u(t)))^{r} \right]$$ is bounded for any $0 < r < \infty$.
\end{proof}
\begin{proposition}\label{prop_Strichartz_estimate_for_sigma_space}
    Assume $0 < 2\sigma < \frac{4}{n-2}$ if $n \geq 3$ and $0 < 2\sigma < \infty$ if $n=1,2$. Let $u_0 \in \Sigma$. Then for the corresponding $H^{1}_{x}$ solution $u$ to \eqref{eq:additive_SNLS}, we have  $xu\in L^{\frac{4\sigma + 4}{n\sigma}}_t L^{2\sigma + 2}_x([0,T)\times\mathbb{R}^{n})$ a.s. for any $T>0$.
\end{proposition}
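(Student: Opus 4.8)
The plan is to control $xu$ through the pseudo-conformal vector field $J(t):=x-2it\nabla$, which is the natural conjugate of multiplication by $x$ under the propagator $S(t)=e^{-it\Delta}$ of \eqref{eq:additive_SNLS}. A direct computation using $[\Delta,x]=2\nabla$ gives the identity $J(t)=S(t)\,x\,S(-t)$, and hence the commutation relations $J(t)S(t)=S(t)x$ and $J(t)S(t-s)=S(t-s)J(s)$. Writing $J(t)=e^{-i|x|^2/(4t)}(-2it\nabla)e^{i|x|^2/(4t)}$ and using that the gauge factor has modulus one, one obtains the pointwise bound $|J(t)(|u|^{2\sigma}u)|\le(2\sigma+1)\,|u|^{2\sigma}\,|J(t)u|$. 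Finally, since $xu=J(t)u+2it\nabla u$, on a finite interval we have $\|xu\|_{L^q_tL^p_x([0,T))}\le\|J(t)u\|_{L^q_tL^p_x([0,T))}+2T\,\|\nabla u\|_{L^q_tL^p_x([0,T))}$, where $(p,q)=(2\sigma+2,\tfrac{4\sigma+4}{n\sigma})$. Under the stated hypotheses $(p,q)$ is a Strichartz admissible pair in the sense of \eqref{eq:Strichartz_admissible_pair}, and $\nabla u\in L^q_tL^p_x$ almost surely by Theorem \ref{thm:result_well-posedness} (which gives $u\in L^q_tW^{1,p}_x$); the task therefore reduces to bounding $\|J(t)u\|_{L^q_tL^p_x([0,T))}$.

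Applying $J(t)$ to the Duhamel formula for $u$ and using the two commutation relations (the second one also inside the stochastic integral) yields
\[
J(t)u(t)=S(t)(xu_0)+i\int_0^t S(t-s)\,J(s)\big(|u|^{2\sigma}u\big)\,ds+i\int_0^t S(t-s)\big(J(s)\phi\big)g(s)\,dB(s).
\]
I would then take $L^q_tL^p_x$ norms over a subinterval $I\subset[0,T)$ and estimate the three terms via Lemma \ref{stichartzlemma}. The linear term is bounded by $\|xu_0\|_{L^2_x}<\infty$ since $u_0\in\Sigma$. For the stochastic term, $J(s)\phi=x\phi-2is\nabla\phi$ is Schwartz with Sobolev norms growing at most like $\langle s\rangle\lesssim\langle T\rangle$ on $[0,T)$, so the martingale / Burkholder--Davis--Gundy computation carried out for $z$ in Section \ref{section:notation} (now with $\phi$ replaced by $J(s)\phi$, and $g$ bounded and in $L^2_t$) gives $\big\|i\int_0^{\cdot}S(\cdot-s)(J(s)\phi)g\,dB\big\|_{L^q_tL^p_x([0,T))}<\infty$ almost surely.

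The nonlinear term is the heart of the matter. By the inhomogeneous Strichartz estimate, the gauge bound, and Hölder in space (using $\tfrac{2\sigma+1}{p}=\tfrac1{p'}$ with $p=2\sigma+2$), its $L^q_tL^p_x(I)$ norm is controlled by $\big\||u|^{2\sigma}|J u|\big\|_{L^{q'}_tL^{p'}_x(I)}\le\big\|\,\|u\|_{L^p_x}^{2\sigma}\|Ju\|_{L^p_x}\big\|_{L^{q'}_t(I)}$. Crucially, I would place the $2\sigma$ low-order factors in $L^\infty_tL^p_x$ and pay a power of the interval length in time: since $2\sigma+2\le\tfrac{2n}{n-2}$ in the energy-subcritical range, Sobolev embedding together with the $C_tH^1_x$ bound of Theorem \ref{thm:result_well-posedness} gives $\|u\|_{L^\infty_tL^p_x([0,T))}<\infty$ a.s., and Hölder in time yields
\[
\Big\|\int_0^t S(t-s)J(s)\big(|u|^{2\sigma}u\big)\,ds\Big\|_{L^q_tL^p_x(I)}\lesssim |I|^{\,1-\frac2q}\,\|u\|_{L^\infty_tL^p_x([0,T))}^{2\sigma}\,\|Ju\|_{L^q_tL^p_x(I)},
\]
where the exponent $1-\tfrac2q=\tfrac{(2-n)\sigma+2}{2\sigma+2}$ is strictly positive precisely because $2\sigma<\tfrac4{n-2}$ (and for all $\sigma$ when $n=1,2$).

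The final step is to close the estimate. Because $1-\tfrac2q>0$ and $\|u\|_{L^\infty_tL^p_x([0,T))}$ is a.s.\ finite, I can choose an a.s.\ positive length $\delta$ so that $C\delta^{1-2/q}\|u\|_{L^\infty_tL^p_x}^{2\sigma}\le\tfrac12$, partition $[0,T)$ into the finitely many subintervals $I_1,\dots,I_N$ of length $\le\delta$, absorb the $\tfrac12\|Ju\|_{L^q_tL^p_x(I_j)}$ term on each piece, and iterate, the data at the $j$-th splitting time being $\|xu(t_{j-1})\|_{L^2_x}$, which stays finite via the $L^\infty_tL^2_x$ (i.e.\ $(p,q)=(2,\infty)$) endpoint of the same Strichartz estimate together with $u\in C_tH^1_x$. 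Summing over the $N$ pieces gives $\|Ju\|_{L^q_tL^p_x([0,T))}<\infty$ a.s. The main obstacle I anticipate is the rigorous justification of this absorption: one must know a priori that $\|Ju\|_{L^q_tL^p_x(I_j)}$ is finite before subtracting it, which I would secure by running the whole argument on the regularized solutions used in the well-posedness theory of Theorem \ref{thm:result_well-posedness} (for which $xu$ is genuinely a Strichartz function), deriving the bound uniformly, and passing to the limit; a secondary technical point, the commutation of $J(t)$ with the stochastic integral, follows from the stochastic Fubini theorem and the boundedness of $J(s)\phi$ on $[0,T)$.
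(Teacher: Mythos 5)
Your proof is correct, and although it follows the same skeleton as the paper's (commute the spatial weight through the Duhamel formula, apply Strichartz estimates, absorb the nonlinear term over a finite partition of $[0,T)$, and control the stochastic convolution by a Burkholder--Davis--Gundy argument), it differs in two genuine ways. (i) The paper distributes the commuted weight as $x_j+2i(t-s)\partial_{x_j}$, so it must estimate $\partial_{x_j}(|u|^{2\sigma}u)$ separately via the Leibniz rule, H\"older's inequality, and a fractional Sobolev embedding $W^{s,2+\theta}_x\hookrightarrow L^{2\sigma+2}_x$ with a tuned parameter $\theta$; you instead propagate the single vector field $J(t)=x-2it\nabla$, whose gauge-conjugation identity yields the pointwise bound $|J(t)(|u|^{2\sigma}u)|\lesssim |u|^{2\sigma}|J(t)u|$, reducing the nonlinear estimate to one H\"older application, and you recover $xu=J(t)u+2it\nabla u$ at the end from the $L^q_tW^{1,p}_x$ bound of Theorem \ref{thm:result_well-posedness}. (ii) The smallness that closes the bootstrap comes from different sources: the paper shrinks the Strichartz norm of $u$ on each partition piece ($\Vert u\Vert_{L^q_tW^{1,r}_x(I_k)}<\varepsilon$, using absolute continuity of an a.s.\ finite norm), while you shrink the interval length and exploit the positive power $|I|^{1-2/q}$, where $1-\tfrac{2}{q}>0$ is exactly the energy-subcriticality condition, multiplied by the a.s.\ finite $L^\infty_tL^{2\sigma+2}_x$ bound furnished by $C_tH^1_x$ and Sobolev embedding. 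Your version makes the role of $2\sigma<\tfrac{4}{n-2}$ transparent, avoids fractional Sobolev indices, and treats all dimensions uniformly (the paper proves only $n=1,2$ in-house and defers $n\geq 3$ to \cite{Herr_2019}); the paper's version avoids invoking the $L^\infty_t$ bound. Both arguments face the same a priori-finiteness issue before absorbing the weighted norm, which you correctly identify and resolve by running the estimate on regularized solutions, and both iterate the $L^2_x$ data across subintervals through the $(2,\infty)$ endpoint, as you indicate.
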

\begin{proof}
    We will only present the proof for $n=1,2$. For $n \geq 3$, choosing $(p,q) = \left( \frac{n(2\sigma+2)}{n+2\sigma}, \frac{4\sigma+4}{(n-2)\sigma} \right)$ and proceeding as in \cite[Lemma 2.1]{Herr_2019} yields the desired result. Since $u \in L^q_tW^{1,r}_x([0,T)\times\mathbb{R}^{n})$ a.s. for any admissible pair $(p,q)$, we can take a finite partition $\{I_k\} := \{[t_k, t_{k+1}]\}$ of $[0,T]$ satisfying $\Vert u \Vert_{L^q_t W^{1,r}_x (I_k)} < \varepsilon$ for any admissible pair $(p,q) \neq (2,\infty)$. Using the commutation relation $x_j S(t) = S(t) (x_j + 2it \partial_{x_j})$, we have
\begin{equation} \label{xju}   
\begin{aligned}
    x_j u(t) &= S(t) (x_j u_0 + 2it\partial_{x_j} u_0) + i \int_0^t S(t-s) (x_j + 2i(t-s)\partial_{x_j})(\vert u \vert^{2\sigma}u(s))ds \\&\quad
    + i \int_0^t S(t-s) (x_j + 2i(t-s)\partial_{x_j}) \phi(x) g(s) dB(s).
\end{aligned}
\end{equation}
For the first term, we have
\begin{equation*}
    \Vert S(t) x_j u_0 \Vert_{L^{\frac{4\sigma + 4}{n\sigma}}_t L^{2\sigma + 2}_x} \lesssim \Vert x_j u_0 \Vert_{L^2_x} \lesssim \Vert u_0 \Vert_{\Sigma},
\end{equation*}
which is finite since $u_0 \in \Sigma$. Also, we have
\begin{equation*}
    \Vert S(t) (t \partial_{x_j} u_0) \Vert_{L^{\frac{4\sigma + 4}{n\sigma}}_t L^{2\sigma + 2}_x} \lesssim T \Vert \partial_{x_j} u_0 \Vert_{L^2_x} \leq C_T \Vert u_0 \Vert_{H^1_x}.
\end{equation*}
For the second term, we apply Strichartz estimates, Hölder's inequality, Sobolev embedding and make use of our partition to obtain
\begin{align*}
    \left \Vert \int_0^t S(t-s) x_j \vert u \vert^{2\sigma} u(s) ds \right \Vert&_{L^{\frac{4\sigma + 4}{n\sigma}}_t L^{2\sigma + 2}_x (I_k)} \\&\lesssim \Vert x_j \vert u \vert^{2\sigma} u \Vert_{L^{\frac{4\sigma + 4}{4\sigma + 4 - n\sigma}}_t L^{\frac{2\sigma + 2}{2\sigma + 1}}_x (I_k)} \\
    &\lesssim \Vert x_j u \Vert_{L^{\frac{4\sigma + 4}{n\sigma}}_t L^{2\sigma +2}_x (I_k)} \Vert u \Vert_{L^{\frac{2\sigma(4\sigma+4)}{4\sigma + 4 - 2n\sigma}}_t L^{2\sigma + 2}_x (I_k)}^{2\sigma} \\&\lesssim \Vert x_j u \Vert_{L^{\frac{4\sigma + 4}{n\sigma}}_t L^{2\sigma +2}_x (I_k)} \Vert u \Vert_{L^{\frac{2\sigma(4\sigma+4)}{4\sigma + 4 - 2n\sigma}}_t W^{s,2+\theta}_x (I_k)}^{2\sigma} \\
    &\leq C_T \Vert u \Vert_{L^{\frac{8+4\theta}{n\theta}}_t W^{1,2+\theta}_x (I_k)}^{2\sigma} \Vert x_j u \Vert_{L^{\frac{4\sigma + 4}{n\sigma}}_t L^{2\sigma +2}_x (I_k)} \\&\leq C_T \varepsilon^{2\sigma} \Vert x_j u \Vert_{L^{\frac{4\sigma + 4}{n\sigma}}_t L^{2\sigma +2}_x (I_k)}
\end{align*}
where $\theta>0$ is small enough to satisfy $\frac{8+4\theta}{n\theta} > \frac{4\sigma(\sigma+1)}{2\sigma+2-n\sigma}$ and $s = n\left( \frac{1}{2+\theta} - \frac{1}{2\sigma+2} \right)$, which is always less than 1 for any $\theta > 0$. Similarly, we have
\begin{align*}
    &\left \Vert \int_0^t S(t-s) 2i(t-s) \partial_{x_j} \vert u \vert^{2\sigma} u(s) ds \right \Vert_{L^{\frac{4\sigma + 4}{n\sigma}}_t L^{2\sigma + 2}_x (I_k)} \\
    &\quad \leq 2T\left \Vert \int_0^t S(t-s) \partial_{x_j} \vert u \vert^{2\sigma} u(s) ds \right \Vert_{L^{\frac{4\sigma + 4}{n\sigma}}_t L^{2\sigma + 2}_x (I_k)} + \left \Vert \int_0^t S(t-s) (2s) \partial_{x_j} \vert u \vert^{2\sigma} u(s) ds \right \Vert_{L^{\frac{4\sigma + 4}{n\sigma}}_t L^{2\sigma + 2}_x (I_k)} \\
    &\quad\leq C_T \Vert \partial_{x_j} \vert u \vert^{2\sigma} u \Vert_{L^{\frac{4\sigma + 4}{4\sigma + 4 - n\sigma}}_t L^{\frac{2\sigma + 2}{2\sigma + 1}}_x (I_k)} + \Vert 2t \partial_{x_j} \vert u \vert^{2\sigma} u \Vert_{L^{\frac{4\sigma + 4}{4\sigma + 4 - n\sigma}}_t L^{\frac{2\sigma + 2}{2\sigma + 1}}_x (I_k)} \\& \quad
    \leq C_T \Vert \partial_{x_j} u \Vert_{L^{\frac{4\sigma + 4}{n\sigma}}_t L^{2\sigma +2}_x (I_k)} \Vert u \Vert_{L^{\frac{2\sigma(4\sigma+4)}{4\sigma + 4 - 2n\sigma}}_t W^{s,2+\theta}_x (I_k)}^{2\sigma} \\&\quad\leq C_T \varepsilon^{2\sigma + 1}.
\end{align*}
For the last term in \eqref{xju}, using the Burkholder-Davis-Gundy inequality, applying Minkowski's inequality twice (since $\frac{4\sigma+4}{n\sigma} \geq 2$ and $2\sigma+2 \geq 2$, we can apply Minkowski's inequality) and using Strichartz estimates, we obtain
\begin{align*}
    &\left\Vert \mathbb{E}\left[\int_0^t S(t-s) (x_j + 2i(t-s) \partial_{x_j})\phi(x)g(s)dB(s) \right] \right\Vert_{L^{\frac{4\sigma + 4}{n\sigma}}_t L^{2\sigma + 2}_x} \\ 
    &\lesssim \left \Vert \mathbb{E}\left[\left( \int_0^t \vert S(t-s) (x_j + 2i(t-s) \partial_{x_j})\phi(x)g(s)\vert^2 ds \right)^{1/2} \right] \right \Vert_{L^{\frac{4\sigma + 4}{n\sigma}}_t L^{2\sigma + 2}_x} \\
    &\lesssim \left \Vert  S(t-s) (x_j + 2i(t-s) \partial_{x_j})\phi(x) (\mathbb{E}\left[g(s)^2 \right])^{1/2} \right \Vert_{L^2_s L^{\frac{4\sigma + 4}{n\sigma}}_t L^{2\sigma + 2}_x} \\
    &\leq C_T \left \Vert (x_j - 2is \partial_{x_j})\phi(x) (\mathbb{E}\left[g(s)^2 \right])^{1/2} \right \Vert_{L^2_s L^2_x} + C_T \left \Vert \partial_{x_j}\phi(x) (\mathbb{E}\left[g(s)^2 \right])^{1/2} \right \Vert_{L^2_s L^2_x} \\
    &\leq C(T, g,\phi).
\end{align*}
Thus, combining the above estimates leads to
\begin{equation*}
    \Vert x_j u(t) \Vert_{L^{\frac{4\sigma + 4}{n\sigma}}_t L^{2\sigma + 2}_x (I_k)} \leq C(T, g, \phi) (\Vert u_0 \Vert_{\Sigma}+1) + C_T \varepsilon^{2\sigma} \Vert x_j u(t) \Vert_{L^{\frac{4\sigma + 4}{n\sigma}}_t L^{2\sigma + 2}_x (I_k)}+C_{T}\varepsilon^{2\sigma+1}.
\end{equation*}
As a result, taking sufficiently small $\varepsilon$ and summing over $k$, we see that $\Vert x_j u(t) \Vert_{L^\frac{4\sigma+4}{n\sigma}_t L^{2\sigma+2}_x([0,T])}<\infty$ for any $0<T<\infty$.

\end{proof}

\section{Pseudo-conformal energy estimates}\label{section:pseudo-conformal_energy}
We define the pseudo-conformal energy functional as
\begin{equation}\label{eq:pseudo-conf_energy}
    E(u(s))=\int_{\mathbb{R}^{n}}\vert (x+2i(1+s)\nabla)u(s)\vert^{2}dx+\frac{4}{\sigma+1}(1+s)^{2}\Vert u(s)\Vert_{L^{2\sigma+2}_{x}}^{2\sigma+2}\, ,\quad s>0.
\end{equation}
This functional can also be expressed as
\begin{equation*}
    E(u(s))=V(u(s))-4(1+s)G(u(s))+8(1+s)^{2}H(u(s))
\end{equation*}
where $V$ is the virial (or variance) functional $\int_{\mathbb{R}^n} |x|^2 u \overline{u} dx$, $G$ is the momentum-like functional given by $\Im \int_{\mathbb{R}^n} \overline{u} x \cdot \nabla u  dx$, and $H$ is the Hamiltonian. We shall apply the Itô formula to \eqref{eq:pseudo-conf_energy} to understand the growth rate and the behavior of the differential of this quantity, which will play an important role in establishing the scattering in $L^{2}_{x}$ and $\Sigma$.
\begin{lemma}\label{T12lemma}
    Let $0<2\sigma<\frac{4}{n-2}$ if $n\geq 3$, $0<2\sigma<\infty$ if $n=1,2$. Assume also that $\langle\cdot\rangle^{2}g\in L^{2}_{t}([0,\infty)),\, a.s.$ and $u$ is a solution to \eqref{eq:additive_SNLS} with the initial data $u_{0}\in\Sigma$. Then we have
    \begin{equation*}
        E(u(T))=E(u(0))+\frac{4(2-n\sigma)}{\sigma+1}\int_{0}^{T}\int_{\mathbb{R}^{n}}(1+s)\vert u\vert^{2\sigma+2}dxds+\int_{0}^{T}T_{1}(s)ds+\int_{0}^{T}T_{2}(s)dB
    \end{equation*}
    a.s. for all $T>0$ where $T_{1},T_{2}$ are continuous $\{\mathcal{F}_{t}\}$-adapted  processes such that
    \begin{equation*}
         \int_{0}^{\infty}\vert T_{1}(s)\vert ds+\sup_{0\leq T<\infty}\left\vert\int_{0}^{T}T_{2}(s)dB(s)\right\vert<\infty,\quad a.s.
    \end{equation*}
\end{lemma}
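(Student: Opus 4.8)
The plan is to apply the Itô formula to the pseudo-conformal energy through its decomposition $E(u(s))=V(u(s))-4(1+s)G(u(s))+8(1+s)^{2}H(u(s))$ and to sort the resulting differential into three groups: the deterministic NLS-drift (together with the contribution of the explicit $s$-dependence of the coefficients), the noise-induced second-order Itô corrections, and the martingale part. Writing the equation as $du=-i(\Delta u-|u|^{2\sigma}u)\,dt-i\phi g\,dB$, I would compute $dV$, $dG$, $dH$ separately; each splits as a drift coming from the deterministic flow, plus a term $\tfrac12\Phi''(-i\phi g,-i\phi g)\,ds$, plus a martingale $\Phi'(u)\cdot(-i\phi g)\,dB$, for $\Phi\in\{V,G,H\}$. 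The coefficients $(1+s)$ and $(1+s)^2$ are deterministic and smooth, so their cross-variation with the martingale parts vanishes and the ordinary product rule applies; one only picks up the extra drift $\partial_s E=-4G+16(1+s)H$ from differentiating the explicit $s$-dependence.

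For the deterministic group I would invoke the classical pseudo-conformal (virial) computation. Since $H$ is conserved along the deterministic flow, its NLS-drift vanishes: with $H'(u)=-\Delta u+|u|^{2\sigma}u$ one has $b(u)=-i(\Delta u-|u|^{2\sigma}u)=iH'(u)$, so $\Re\int\overline{H'(u)}\,b(u)\,dx=\Re\big(i\int|H'(u)|^{2}dx\big)=0$. The NLS-drifts of $V$ and $G$, combined with $\partial_s E$, collapse after integration by parts to exactly $\frac{4(2-n\sigma)}{\sigma+1}(1+s)\Vert u\Vert_{L^{2\sigma+2}_x}^{2\sigma+2}$, with no deterministic remainder. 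Consequently $T_1$ consists solely of the three second-order Itô corrections (each of size $\sim g^{2}$ times a spatial integral against $\phi$, weighted by $1$, $(1+s)$, $(1+s)^{2}$), and $T_2$ collects the three martingale integrands (each $\sim g$ times such a spatial integral, with the same weights).

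The decisive observation for both $T_1$ and $T_2$ is that every spatial integral can be arranged, after integration by parts, so that all $x$- and $\nabla$-weights fall on $\phi$ — which is Schwartz, hence these weighted versions remain in every $L^{p}$ — leaving $u$ paired only through $\Vert u\Vert_{L^2_x}$, $\Vert\nabla u\Vert_{L^2_x}$, or $\Vert u\Vert_{L^{2\sigma+2}_x}$. By Proposition \ref{proposition:boundedness_of_mass_and_hamiltonian} these norms are almost surely bounded uniformly in time, so each spatial factor is a.s. $O(1)$. This step is essential: $\Vert xu(s)\Vert_{L^2_x}$, which is itself part of $E$, is not known to be uniformly bounded, so one must never leave a bare weight on $u$. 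It then follows that $|T_1(s)|\lesssim(1+s)^{2}g(s)^{2}\cdot O(1)$ a.s., and $\int_0^\infty(1+s)^{2}g^{2}\,ds<\infty$ is guaranteed by $\langle\cdot\rangle^{2}g\in L^2_t$ (in fact only $\langle\cdot\rangle g\in L^2_t$ is needed here), giving $\int_0^\infty|T_1|\,ds<\infty$ a.s. For the martingale part, the largest contribution is the one carried by $8(1+s)^2H$, whose integrand behaves like $(1+s)^{2}g(s)\cdot O(1)$ a.s.; thus $\int_0^\infty T_2(s)^{2}\,ds\lesssim\int_0^\infty(1+s)^{4}g^{2}\,ds\cdot O(1)$, which is a.s. finite precisely because $\langle\cdot\rangle^{2}g\in L^2_t$. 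Since $t\mapsto\int_0^t T_2\,dB$ is a continuous local martingale with a.s. finite total quadratic variation, it converges a.s. and has a.s. finite supremum (by the Dambis–Dubins–Schwarz time change, or by localization and Burkholder–Davis–Gundy), yielding $\sup_T|\int_0^T T_2\,dB|<\infty$ a.s. Continuity and $\{\mathcal{F}_t\}$-adaptedness of $T_1,T_2$ follow from $u\in C_tH^1_x$ and $xu\in C_tL^2_x$ a.s., provided by Theorem \ref{thm:result_well-posedness}.

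I expect the main obstacle to be rigor rather than algebra. The functionals $V$, $G$, $H$ involve the unbounded weights $x$, $\nabla$, and $x\cdot\nabla$, so the Itô formula must be justified by a mollification/truncation procedure relying on the a.s. regularity $u\in C_tH^1_x$, $xu\in C_tL^2_x$, and the exact cancellation producing the main term requires the full classical virial identity. The genuinely new difficulty, absent in the deterministic setting, is the control of the martingale remainder $T_2$: its quadratic variation is exactly what forces the weighted integrability hypothesis $\langle\cdot\rangle^{2}g\in L^2_t$, and I expect this to be the most delicate point, since it is where stochastic calculus interacts with the quadratic growth $(1+s)^2$ of the pseudo-conformal weight.
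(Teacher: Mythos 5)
Your proposal is correct and follows the paper's own strategy almost verbatim: the same decomposition $E=V-4(1+s)G+8(1+s)^{2}H$, Itô's formula applied to each piece (with the same caveat about justification by mollification, which the paper handles by citing approximation arguments), the same virial cancellation producing the drift $\frac{4(2-n\sigma)}{\sigma+1}(1+s)\Vert u\Vert_{L^{2\sigma+2}_x}^{2\sigma+2}$, and the same integration-by-parts device of moving every weight $x$, $|x|^{2}$, $\nabla$ onto the Schwartz function $\phi$ so that $u$ enters only through $\Vert u\Vert_{L^2_x}$, $\Vert\nabla u\Vert_{L^2_x}$, $\Vert u\Vert_{L^{2\sigma+2}_x}$, which are controlled by Proposition \ref{proposition:boundedness_of_mass_and_hamiltonian}. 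Your identification of which weights ($1$, $(1+s)$, $(1+s)^{2}$) accompany which martingale/correction terms, and of $\langle\cdot\rangle g\in L^2_t$ versus $\langle\cdot\rangle^{2}g\in L^2_t$ as the respective requirements for $T_1$ and $T_2$, matches the paper exactly.

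The one step where you genuinely diverge is the bound on $\sup_{T}\bigl\vert\int_0^T T_2\,dB\bigr\vert$. The paper takes expectations and applies the Burkholder--Davis--Gundy inequality, bounding $\mathbb{E}\bigl[\sup_T\vert\int_0^T T_2\,dB\vert\bigr]$ by $\mathbb{E}\bigl[\Vert\langle\cdot\rangle^{2}g\Vert_{L^2_s}\sup_s(1+M(u(s))+H(u(s)))\bigr]$ and concluding a.s. finiteness from finiteness of this expectation; strictly speaking, this requires some integrability of $\Vert\langle\cdot\rangle^{2}g\Vert_{L^2_s}$ in $\omega$ beyond the lemma's stated a.s. hypothesis. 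Your route is pathwise: you show the quadratic variation $\int_0^\infty T_2^{2}\,ds\lesssim\Vert\langle\cdot\rangle^{2}g\Vert_{L^2_s}^{2}\sup_s(1+M+H)^{2}$ is a.s. finite and then invoke the standard fact (Dambis--Dubins--Schwarz, or localization plus BDG) that a continuous local martingale with a.s. finite total quadratic variation converges a.s. and so has a.s. finite supremum. This is better adapted to the hypotheses as stated: it uses only almost-sure bounds and sidesteps the implicit moment assumption on $g$ that the paper's expectation argument relies on, at the cost of invoking a slightly less elementary martingale convergence result.
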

\begin{proof}
We begin by noting that it suffices to find the differential of $V$, $G$, and $H$. The below calculations can be justified by approximating the solution $u$ via smooth approximate identities and limiting arguments, see \cite{deBouard_H1, Zhang_PhD} for details. We will work with the corresponding densities and let us start with the virial functional.
\begin{align*}
    dV&=\partial_{u}Vdu+\partial_{\overline{u}}Vd\overline{u}+\partial_{u}\partial_{\overline{u}}Vdud\overline{u}\\
    &=2\vert x\vert^{2}\Im(\overline{u}\Delta u)dt+2\vert x\vert^{2}\Im(\overline{u}\phi)dB+\vert x\vert^{2}\vert\phi\vert^{2}g^{2}dt.
\end{align*}
Taking the space-time integral of $dV$ and applying integration by parts in the space integral to the Laplacian part, we get
\begin{equation*}
  V(u(T))=V(u(0))+4\int_{0}^{T}G(u(s))ds+2\Im\int_{0}^{T}\int_{\mathbb{R}^{n}}\vert x\vert^{2}\overline{u}\phi gdB+\int_{0}^{T}\int_{\mathbb{R}^{n}}\vert x\vert^{2}\vert\phi\vert^{2}g^{2}(s)ds.
\end{equation*}
We now apply Itô formula to the momentum-like operator $G(u(t))=\Im\int_{\mathbb{R}^{n}}u\overline{\nabla u}\cdot xdx$. 
\begin{align*}
    dG&=\partial_{\overline{u}}Gd\overline{u}+\partial_{\nabla u}Gd(\nabla u)+\partial_{\overline{u}}\partial_{\nabla u}G d\overline{u}d(\nabla u)\\
    &= -2\Im\left[i x\cdot\nabla u(\Delta \overline{u}-\vert u\vert^{2\sigma}\overline{u})dt+ix\cdot\nabla u\overline{\phi}gdB\right]+n\Im\left[-i\overline{u}(\Delta u-\vert u\vert^{2\sigma}u)dt-i\overline{u}\phi gdB\right]\\
    &\quad-\Im(\overline{\phi} g^2 x\cdot \nabla\phi dt)=:I+II+III.
\end{align*}
Let us consider $I$ first, and write
\begin{align*}
    I 
    &=-2\Re(x\cdot\nabla u\Delta\overline{u})dt+2\Re(x\cdot\vert u\vert^{2\sigma}\overline{u}\nabla u)dt-2\Re(x\cdot\nabla u\overline{\phi}g)dB=:A+B+C.
\end{align*}
For the term $A$, integration by parts yields
\begin{align*}
    \int_{\mathbb{R}^n} x\cdot\nabla u\Delta \overline{u} dx&=\int_{\mathbb{R}^n} \Delta(x\cdot\nabla u)\overline{u}dx=2\int_{\mathbb{R}^n} \overline{u}\Delta u dx +\sum_{j=1}^{n}\int_{\mathbb{R}^n} \overline{u}x_{j}\partial_{j}\Delta u dx\\&
    = (n-2)\int_{\mathbb{R}^n} \vert\nabla u\vert^{2}dx -\int_{\mathbb{R}^n} x\cdot\nabla\overline{u}\Delta u dx,
\end{align*}
which amounts to writing that
\begin{equation*}
    -2\Re \int_{\mathbb{R}^n} x\nabla u\Delta\overline{u} dx= (2-n)\int_{\mathbb{R}^n} \vert\nabla u\vert^{2} dx. 
\end{equation*}
In a similar spirit, regarding the term $B$, we get
\begin{equation*}
    2\Re\int_{\mathbb{R}^n} x\cdot \vert u\vert^{2\sigma}\overline{u}\nabla udx=\frac{1}{\sigma+1} \int_{\mathbb{R}^n} x\cdot\nabla(\vert u\vert^{2\sigma+2})dx=-\frac{n}{\sigma+1}\int_{\mathbb{R}^n} \vert u\vert^{2\sigma+2}dx.
\end{equation*}
For the term $C$, application of integration by parts as above yields
\begin{equation*}
    -2\Re\int_{\mathbb{R}^n} x\cdot\nabla u\overline{\phi}g dx=2n\Re\int_{\mathbb{R}^n} u\overline{\phi}g dx + 2\Re\int_{\mathbb{R}^n} ux\cdot\nabla\overline{\phi}g dx.
\end{equation*}
Consequently, we have
\begin{equation*}
    \int_{\mathbb{R}^n} I dx = (2-n)\int_{\mathbb{R}^n} \vert\nabla u\vert^{2} dxdt -\frac{n}{\sigma+1}\int_{\mathbb{R}^n} \vert u\vert^{2\sigma+2}dx+2n\Re\int_{\mathbb{R}^n} u\overline{\phi}g dxdB+2\Re\int_{\mathbb{R}^n} ux\cdot\nabla\overline{\phi}g dxdB.
\end{equation*}
Handling the second term by the integration by parts again we obtain
\begin{align*}
    II 
    &=n\vert\nabla u\vert^{2}dt+n\vert u\vert^{2\sigma+2}dt -n\Re(\overline{u}\phi) g dB.
\end{align*}
Taking the space-time integrals and combining all of the terms above, we finally get that
\begin{align*}
G(u(T))&=G(u(0))+2\int_{0}^{T}\int_{\mathbb{R}^n}\vert\nabla u\vert^{2} dxdt - n\left(\frac{1}{\sigma+1}-1\right)\int_{0}^{T}\int_{\mathbb{R}^n} \vert u\vert^{2\sigma+2}dxdt + n\Re\int_{0}^{T}\int_{\mathbb{R}^n} u\overline{\phi}g dxdB\\&\quad
    +2\Re\int_{0}^{T}\int_{\mathbb{R}^n} ux\cdot\nabla\overline{\phi}g dxdB-\Im\int_{0}^{T}\int_{\mathbb{R}^n} x\overline{\phi}\cdot\nabla\phi g^{2}dxdt.
\end{align*}
Using the definition of the Hamiltonian, equivalently we write 
\begin{equation*}
\begin{aligned}
G(u(T))&=G(u(0))+4\int_{0}^{T}H(u(s))ds-\frac{2-n\sigma}{\sigma+1}\int_{0}^{T}\int_{\mathbb{R}^n}\vert u\vert^{2\sigma+2}dxds+n\Re\int_{0}^{T}\int_{\mathbb{R}^n} u\overline{\phi}g dxdB\\&\quad
    +2\Re\int_{0}^{T}\int_{\mathbb{R}^n} ux\cdot\nabla\overline{\phi}g dxdB-\Im\int_{0}^{T}\int_{\mathbb{R}^n} \overline{\phi} x\cdot\nabla\phi g^{2}dxds.
\end{aligned}
\end{equation*}
Now, we are ready to apply the Itô formula to the pseudo-conformal energy \eqref{eq:pseudo-conf_energy}:
\begin{align*}
    dE&=d\left(V(u(s))-4(1+s)G(u(s))+8(1+s)^{2}H(u(s))\right)\\
    &=dV-4G(u(s))ds - 4(1+s)dG+16(1+s)H(u(s))ds+8(1+s)^{2}dH.
\end{align*}
Taking the space-time integral, we obtain
\begin{align*}
    &E(u(T))\\&=E(u(0))+4\int_{0}^{T}G(u(s))ds+2\Im\int_{0}^{T}\int_{\mathbb{R}^{n}}\vert x\vert^{2}\overline{u}\phi gdxdB+\int_{0}^{T}\int_{\mathbb{R}^{n}}\vert x\vert^{2}\vert\phi\vert^{2}g^{2}(s)dxds\\
    &\quad-4\int_{0}^{T}G(u(s))ds-16\int_{0}^{T}(1+s)H(u(s))ds-\frac{4(2-n\sigma)}{\sigma+1}\int_{0}^{T}\int(1+s)\vert u\vert^{2\sigma+2}dxds \\
    &\quad +4n\Re\int_{0}^{T}\int (1+s)u\overline{\phi}g dxdB    +8\Re\int_{0}^{T}\int (1+s)ux\cdot\nabla\overline{\phi}g dxdB-4\Im\int_{0}^{T}\int (1+s)\overline{\phi}x\cdot\nabla\phi g^{2}dxds\\
    &\quad+16\int_{0}^{T}(1+s)H(u(s))ds-8\Im\int_{0}^{T}\int(1+s)^{2}(\Delta\overline{u}-\vert u\vert^{2\sigma}\overline{u})\phi g dxdB \\
    &\quad\left.+4\int_{0}^{T}\int(1+s)^{2} \vert\nabla\phi\vert^{2}g^{2}dxds  +4\int_{0}^{T}\int(1+s)^{2}\vert u\vert^{2\sigma}\vert\phi g\vert^{2}dxds\right. \\
    &\quad+2\sigma\int_{0}^{T}\int(1+s)^{2}\vert u\vert^{2\sigma-2}g^{2}(\Im(u\overline{\phi}))^{2}dxds\\
    &=E(u(0))+\frac{4(2-n\sigma)}{\sigma+1}\int_{0}^{T}\int(1+s)\vert u\vert^{2\sigma+2}dxds+\int_{0}^{T}T_{1}(s)ds+\int_{0}^{T}T_{2}(s)dB,
\end{align*}
where the term $T_{1}$ consists of the remaining terms with differential $ds$, while $T_{2}$ includes the terms with differential $dB$. Our aim now is to show that $T_{1}$, $T_{2}$ are continuous processes adapted to the filtration, and the last two integrals in the above expression are bounded almost surely for any $T>0$, i.e.,
\begin{equation*}
    \int_{0}^{\infty}\vert T_{1}(s)\vert ds+\sup_{0\leq T<\infty}\left\vert\int_{0}^{T}T_{2}(s)dB(s)\right\vert<\infty.
\end{equation*}
Now, we shall consider the integral involving $T_{1}$. Using Hölder inequality in space and time, we get
\begin{align*}
    \int_{0}^{\infty}\vert T_{1}(s)\vert ds&=\int_{0}^{\infty}\left\vert \int_{\mathbb{R}^{n}}\vert x\vert^{2}\vert\phi\vert^{2}g^{2}(s)dx-4\Im\int_{\mathbb{R}^n} (1+s)\overline{\phi} x\cdot\nabla\phi g^{2}dx+4\int_{\mathbb{R}^n}(1+s)^{2} \vert\nabla\phi\vert^{2}g^{2}dx \right.\\
    &\left.\quad +4\int_{\mathbb{R}^n}(1+s)^{2}\vert u\vert^{2\sigma}\vert\phi g\vert^{2}dx+2\sigma\int_{\mathbb{R}^n}(1+s)^{2}\vert u\vert^{2\sigma-2}g^{2}(\Im(u\overline{\phi}))^{2}dx\right\vert ds\\
    &\lesssim \int_{0}^{\infty}g^{2}(s)\int_{\mathbb{R}^n}\vert x\vert^{2}\vert\phi\vert^{2}dxds+\int_{0}^{\infty}(1+s)g^{2}(s)\int_{\mathbb{R}^n}\vert x\cdot\phi\nabla\phi\vert dxds\\
    &\quad+\int_{0}^{\infty}(1+s)^{2}g^{2}(s) \int_{\mathbb{R}^n}\vert\nabla\phi\vert^{2}dxds+\int_{0}^{\infty}(1+s)^{2}g^{2}(s)\int_{\mathbb{R}^n}\vert u\vert^{2\sigma}\vert\phi\vert^{2}dxds\\
    &\lesssim \Vert g\Vert_{L^{2}_{s}}^{2}\Vert\,\vert\cdot\vert\phi\Vert_{L^{2}_{x}}^{2}+\Vert\langle\cdot\rangle^{\frac{1}{2}} g\Vert_{L^{2}_{s}}^{2}\Vert\,\vert\cdot\vert\phi\Vert_{L^{2}_{x}}\Vert\phi\Vert_{\Dot{H}^{1}_{x}}+\Vert\langle\cdot\rangle g\Vert_{L^{2}_{s}}^{2}\Vert\phi\Vert_{\Dot{H}^{1}_{x}}^{2}\\
    &\quad+\Vert\langle\cdot\rangle g\Vert_{L^{2}_{s}}^{2}\Vert u\Vert_{L^{\infty}_{s}L^{2\sigma+2}_{x}}^{2\sigma}\Vert \phi\Vert_{L^{2\sigma+2}_{x}}^{2}\\
    &\lesssim \Vert\langle\cdot\rangle g\Vert_{L^{2}_{s}}^{2}\Vert\phi\Vert_{\Sigma}\big[1+\sup_{0\leq s<\infty}H(u(s))\big].
\end{align*}
For the second integral, we utilize the Burkholder-Davis-Gundy inequality first, then application of integration by parts, Cauchy-Schwarz and Hölder inequalities leads to
\begin{align*}
    \mathbb{E}\left[\sup_{0\leq T<\infty}\left\vert\int_{0}^{T}T_{2}(s)dB(s)\right\vert \right] &\lesssim \mathbb{E}\left[\Bigg(\int_{0}^{\infty}\vert T_{2}(s)\vert^{2}ds\Bigg)^{1/2}\right]\\ &\lesssim\mathbb{E}\left[\Bigg(\int_{0}^{\infty}\left\vert\int_{\mathbb{R}^{n}}\vert x\vert^{2}\overline{u}\phi gdx+\int (1+s)u\overline{\phi}g dx+\int (1+s)ux\cdot\nabla\overline{\phi}g dx \right. \right.\\
    &\left. \left.\quad \quad+\int(1+s)^{2}(\Delta\overline{u}-\vert u\vert^{2\sigma}\overline{u})\phi g dx\right\vert^{2}ds\Bigg)^{1/2}\right] \\&=\mathbb{E}\left[\Bigg(\int_{0}^{\infty}\left\vert\int_{\mathbb{R}^{n}}\vert x\vert^{2}\overline{u}\phi gdx+\int (1+s)u\overline{\phi}g dx+\int (1+s)ux\cdot\nabla\overline{\phi}g dx \right. \right. \\
    &\left. \left.\quad \quad+\int(1+s)^{2}(-\nabla\overline{u}\cdot\nabla\phi-\vert u\vert^{2\sigma}\overline{u}\phi) g dx\right\vert^{2}ds\Bigg)^{1/2}\right]\\
    &\lesssim\mathbb{E}\left[\Bigg(\int_{0}^{\infty}\Vert u(s)\Vert_{L^{2}_{x}}^{2}\Vert\,\vert\cdot\vert^{2}\phi\Vert_{L^{2}_{x}}^{2}g^{2}(s)ds+\int_{0}^{\infty}(1+s)^{2}g^{2}(s)\Vert u(s)\Vert_{L^{2}_{x}}^{2}\Vert\phi\Vert_{L^{2}_{x}}^{2} ds\right.\\
    &\quad\quad+\int_{0}^{\infty}(1+s)^{2}g^{2}(s)\Vert u(s)\Vert_{L^{2}_{x}}^{2}ds\\
    &\left. \quad\quad+\int_{0}^{\infty}(1+s)^{4}g^{2}(s)\big[\Vert u(s)\Vert_{\Dot{H}^{1}_{x}}^{2}\Vert \phi\Vert_{\Dot{H}^{1}_{x}}^{2}+\Vert u(s)\Vert_{L^{2\sigma+2}_{x}}^{4\sigma+2}\Vert\phi\Vert_{L^{2\sigma+2}_{x}}^{2}\big]ds\Bigg)^{1/2}\right]\\    
    &\lesssim C(\phi)\mathbb{E}\left[\Vert\langle\cdot\rangle^2 g\Vert_{L^{2}_{s}}\sup_{0\leq s<\infty}[1 + M(u(s))+H(u(s))]\right].
\end{align*}
Therefore, collecting the above estimates for $T_1$ and $T_2$, we obtain
\begin{align*}
\mathbb{E}&\left[\int_{0}^{\infty}\vert T_{1}(s)\vert ds+\sup_{0\leq T<\infty}\left\vert\int_{0}^{T}T_{2}(s)dB(s)\right\vert\right]\\&\lesssim_{\phi}\mathbb{E}\left[\big(\Vert\langle\cdot\rangle g\Vert_{L^{2}_{s}}^{2} +\Vert\langle\cdot\rangle^2 g\Vert_{L^{2}_{s}}\big)\sup_{0\leq s<\infty}[1+M(u(s))+H(u(s))]\right]<\infty
\end{align*}
since all of the moments of the mass and the Hamiltonian are bounded for all time. Also, continuity of the processes $T_{1}$ and $T_{2}$ follows from the continuity of the solution $u$.
\end{proof}
Next we aim to investigate the growth rate of $E(u(t))$. Instead of direct application of Gronwall's inequality to $E(u(t))$ (because of the presence of the power $q < 1$ in Lemma \ref{gronwall}), we prefer to apply Itô's lemma to the power $E^m(u(t))$ for some integer $m \geq 2$, and set $q = 1/m$ in Lemma \ref{gronwall} to obtain the following growth rate for $E(u(t))$:
\begin{proposition}[Growth of the pseudo-conformal energy]
    Let $E$ be the pseudo-conformal energy defined as in \eqref{eq:pseudo-conf_energy}, $C(\phi)$ be some constant depending on $\Vert\phi\Vert_{\Sigma}$, and $u$ be the solution to \eqref{eq:additive_SNLS} with the initial data $u_{0}\in\Sigma$. Then for any $\tau>0$, if $2\sigma<\frac{4}{n}$, we have
    \begin{equation}\label{eq:growth_of_psedo_energy_mass_sub-critical}
        \mathbb{E}\left[\sup_{0\leq t\leq\tau}(C(\phi)+E(u(t)))\right]\leq C(\Vert u_{0}\Vert_{\Sigma})(1+\tau)^{2-n\sigma}.
    \end{equation}
    Moreover, if $\frac{4}{n}\leq 2\sigma<\frac{4}{n-2}$ when $n=3$, or $\frac{4}{n}\leq 2\sigma<\infty$ when $n=1,2$, then    \begin{equation}\label{eq:growth_of_psedo_energy_mass_super-critical}
        \mathbb{E}\left[\sup_{0\leq t\leq\tau}(C(\phi)+E(u(t)))\right]\leq C(\Vert u_{0}\Vert_{\Sigma})<\infty.
    \end{equation}
\end{proposition}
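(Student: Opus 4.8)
The plan is to convert the Itô identity of Lemma~\ref{T12lemma} into a (stochastic) Gronwall inequality for the shifted energy $\widetilde{E}(t):=C(\phi)+E(u(t))$, where, as in Proposition~\ref{proposition:boundedness_of_mass_and_hamiltonian}, the constant $C(\phi)\ge 1$ is chosen so that $\widetilde E\ge C(\phi)$ and hence $\widetilde E^{m-1}\le C(\phi)^{-1}\widetilde E^{m}$. The structural input I would exploit is the coercivity built into \eqref{eq:pseudo-conf_energy}: since $\int_{\mathbb{R}^n}|(x+2i(1+s)\nabla)u|^2\,dx\ge 0$, one has
\begin{equation*}
\frac{4}{\sigma+1}(1+s)^{2}\,\|u(s)\|_{L^{2\sigma+2}_x}^{2\sigma+2}\le E(u(s))\le\widetilde E(s).
\end{equation*}
Thus the only growth-producing term in Lemma~\ref{T12lemma}, namely $\frac{4(2-n\sigma)}{\sigma+1}\int_0^{T}(1+s)\|u\|_{L^{2\sigma+2}_x}^{2\sigma+2}\,ds$, is controlled when $2-n\sigma>0$ by $(2-n\sigma)\int_0^{T}(1+s)^{-1}\widetilde E\,ds$, which produces exactly a logarithmic integrating factor and hence the target exponent $2-n\sigma$; when $\frac4n\le 2\sigma$ this term is nonpositive and is simply discarded.

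Following the route indicated before the statement, I would apply Itô's formula to $\widetilde E^{m}$ for an integer $m\ge2$. With $dE=\big[\tfrac{4(2-n\sigma)}{\sigma+1}(1+s)\|u\|_{L^{2\sigma+2}_x}^{2\sigma+2}+T_1\big]ds+T_2\,dB$ from Lemma~\ref{T12lemma}, the quadratic variation adds $\tfrac{m(m-1)}{2}\widetilde E^{m-2}T_2^{2}\,ds$, so that
\begin{equation*}
d\widetilde E^{m}=m\widetilde E^{m-1}\Big[\tfrac{4(2-n\sigma)}{\sigma+1}(1+s)\|u\|_{L^{2\sigma+2}_x}^{2\sigma+2}+T_1\Big]ds+\tfrac{m(m-1)}{2}\widetilde E^{m-2}T_2^{2}\,ds+m\widetilde E^{m-1}T_2\,dB.
\end{equation*}
The leading term I bound, via the coercivity above, by $m(2-n\sigma)(1+s)^{-1}\widetilde E^{m}$ and keep it as the integrator $dA=m(2-n\sigma)(1+s)^{-1}ds$, giving $A_{\tau}=m(2-n\sigma)\log(1+\tau)$. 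For the two lower-order terms I would use Young's inequality against a \emph{deterministic and integrable} weight, e.g. $w(s)=(1+s)^{-2}$, to split off a piece $w(s)\widetilde E^{m}$ (whose contribution $\int_0^{\tau}w\,ds$ to $A$ is deterministic and bounded, so it does not alter the exponent) and a remainder free of $\widetilde E$, of the schematic form $(1+s)^{a}|T_1|^{m}$ and $(1+s)^{b}|T_2|^{m}$, which I collect into the forcing $\eta$. Since $M_{T}:=m\int_0^{T}\widetilde E^{m-1}T_2\,dB$ is a local martingale, this puts the evolution in the form $\widetilde E^{m}(T)\le\eta+\int_0^{T}\widetilde E^{m}\,dA+M_{T}$ required by Lemma~\ref{gronwall}.

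I would then apply the stochastic Gronwall inequality with $q=1/m$ and any $1/m<p<1$. Because $(\widetilde E^{m})^{1/m}=\widetilde E$, the left-hand side is $\big(\mathbb{E}[\sup_{t\le\tau}\widetilde E]\big)^{m}$, while $A_{\tau}$ being deterministic makes the factor $\big(\mathbb{E}[\exp(pA_{\tau}/(1-p))]\big)^{(1-p)/p}$ equal to $(1+\tau)^{m(2-n\sigma)}$ up to a constant; taking $m$-th roots yields \eqref{eq:growth_of_psedo_energy_mass_sub-critical}, and the discarded-term version (where $A$ retains only the bounded Young part) yields the $\tau$-uniform bound \eqref{eq:growth_of_psedo_energy_mass_super-critical}. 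Alternatively, and more directly, one may set $a(\omega):=E(u(0))+\int_0^{\infty}|T_1|\,ds+\sup_{T'\ge0}|\int_0^{T'}T_2\,dB|$, which is a.s. finite with $\mathbb{E}[a]\le C(\|u_0\|_{\Sigma})$ by Lemma~\ref{T12lemma}, and note that pathwise $\widetilde E(T)\le a(\omega)+(2-n\sigma)\int_0^{T}(1+s)^{-1}\widetilde E\,ds$; the deterministic Gronwall lemma then gives $\widetilde E(T)\le (C(\phi)+a(\omega))(1+T)^{2-n\sigma}$ for a.e. $\omega$, and taking $\sup_{t\le\tau}$ and expectation gives the claim at once.

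The main obstacle is the control of the forcing $\mathbb{E}[\eta]$ in the $\widetilde E^{m}$ route: the weights $w(s)^{-(m-1)}$ produced by Young's inequality multiply $|T_1|^{m},|T_2|^{m}$ by growing powers of $\langle s\rangle$, and reading off from Lemma~\ref{T12lemma} that $|T_1|\lesssim\langle s\rangle^{2}g^{2}\big(1+\|u\|_{L^{2\sigma+2}_x}^{2\sigma}\big)$ and $|T_2|\lesssim\langle s\rangle^{2}|g|\big(1+\|u\|_{L^{2}_x}+\|\nabla u\|_{L^{2}_x}+\|u\|_{L^{2\sigma+2}_x}^{2\sigma+1}\big)$, the finiteness of $\mathbb{E}[\eta]$ reduces to time-integrability of $\langle s\rangle^{O(m)}|g|^{2m}$ tested against the polynomial moments of $M(u)$ and $H(u)$, which are furnished by Proposition~\ref{proposition:boundedness_of_mass_and_hamiltonian}. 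This is precisely where the decay hypothesis on $g$ must be strong enough to absorb the $\langle s\rangle$-weights, and it is the delicate step; the pathwise variant avoids the $m$-th powers entirely and needs only the first-moment bounds already supplied by Lemma~\ref{T12lemma}, which is why I would ultimately prefer it.
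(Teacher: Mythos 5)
Your proposal is correct, and it actually contains two routes: the first is essentially the paper's own argument, while your preferred pathwise variant is a genuinely different and simpler one. The paper, like you, applies It\^o's formula to $(C(\phi)+E)^{m}$, uses exactly your coercivity observation $\tfrac{4}{\sigma+1}(1+s)^{2}\|u\|_{L^{2\sigma+2}_x}^{2\sigma+2}\le E$ to dominate the leading term by $m(2-n\sigma)(1+s)^{-1}(C(\phi)+E)^{m}$, and then invokes Lemma \ref{gronwall} with $q=1/m$. The difference lies in the error terms: the paper proves the pathwise dominations $|T_1|\lesssim (C(\phi)+E)(1+s)^{2}g^{2}$ and $|T_2|\lesssim (C(\phi)+E)(1+s)^{2}|g|$ and absorbs \emph{all} lower-order terms into the integrator $dA_s$, so its forcing $\eta$ is the deterministic constant $C(m)\|u_0\|_{\Sigma}^{2m(\sigma+1)}$, at the price that $A_\tau$ is random and the Gronwall factor requires exponential moments of $\int_0^{\tau}(1+s)^{4}g^{2}\,ds$. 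You instead split $T_1,T_2$ off by Young's inequality against a deterministic weight, which keeps $A$ deterministic and the exponential factor harmless, but transfers the burden to $\mathbb{E}[\eta]$, i.e.\ to polynomial moments of $M(u)$, $H(u)$ (Proposition \ref{proposition:boundedness_of_mass_and_hamiltonian}) and of $\langle s\rangle$-weighted powers of $g$ --- the trade-off you correctly flag as the delicate step.

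Your pathwise alternative avoids both burdens and is, in my view, the cleaner proof. Lemma \ref{T12lemma} already provides that $a(\omega)=E(u(0))+\int_0^{\infty}|T_1|\,ds+\sup_{T'\ge 0}\bigl|\int_0^{T'}T_2\,dB\bigr|$ is a.s.\ finite, and its proof (combined with Proposition \ref{proposition:boundedness_of_mass_and_hamiltonian}) gives $\mathbb{E}[a]\le C(\|u_0\|_{\Sigma})$; then the pathwise inequality $C(\phi)+E(u(T))\le C(\phi)+a(\omega)+(2-n\sigma)\int_0^{T}(1+s)^{-1}\bigl(C(\phi)+E(u(s))\bigr)ds$ and the classical Gronwall lemma yield $\sup_{0\le t\le\tau}\bigl(C(\phi)+E(u(t))\bigr)\le \bigl(C(\phi)+a(\omega)\bigr)(1+\tau)^{2-n\sigma}$ when $2\sigma<\tfrac4n$, and simply $\le C(\phi)+a(\omega)$ when the coefficient $2-n\sigma$ is nonpositive and the term is discarded; taking expectations gives \eqref{eq:growth_of_psedo_energy_mass_sub-critical} and \eqref{eq:growth_of_psedo_energy_mass_super-critical} at once. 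Besides needing only first moments of the error terms, this route sidesteps the paper's domination step for $T_2$, which is delicate: as stated it implicitly requires $\|xu\|_{L^2}^{2}$, $\|u\|_{L^2}^{2}$ and $(1+s)^{2}\|\nabla u\|_{L^2}^{2}$ to be controlled by $C(\phi)+E$, whereas the pseudo-conformal energy \eqref{eq:pseudo-conf_energy} only controls the combination $\|(x+2i(1+s)\nabla)u\|_{L^2}^{2}$ together with the potential term. What the paper's route (and your first route) buys in exchange is direct access to the higher moments $\mathbb{E}[\sup_{t\le\tau}E^{r}]$ recorded in the Remark after the proposition, by taking $q=r/m$; with the pathwise argument these follow as well, but only after upgrading the first-moment bound of Lemma \ref{T12lemma} to $r$-th moments via Burkholder--Davis--Gundy.
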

\begin{proof}
    Let us start with the mass sub-critical case by recalling that
\begin{equation*}
    d(C(\phi)+E)=dE = \frac{4 (2-\sigma n)}{\sigma + 1} (1+s)\Vert u(s) \Vert_{L^{2\sigma+2}_x}^{2\sigma + 2} ds + T_1 ds + T_2 dB(s).
\end{equation*}
Let $m \geq 2$ be an integer. Implementing the Itô formula to $(C(\phi)+E)^{m}$, we have
\begin{align*}
    d(C(\phi)+E)^m &= m (C(\phi)+E)^{m-1}(1+s) \frac{4(2-\sigma n)}{\sigma + 1} \Vert u(s) \Vert_{L^{2\sigma+2}_x}^{2\sigma + 2} ds + m (C(\phi)+E)^{m-1} T_2 dB\\&\quad + m (C(\phi)+E)^{m-1} T_1 ds + \frac{1}{2} m (m-1) (C(\phi)+E)^{m-2} T_2^2 ds,
\end{align*}
which can be expressed in the integral form by the following
\begin{align*}
    (C(\phi)+E(u(t)))^m &= (C(\phi)+E(u(0)))^m + \frac{4m(2-\sigma n)}{\sigma + 1} \int_0^t (C(\phi)+E(u(s)))^{m-1}(1+s) \Vert u(s) \Vert_{L^{2\sigma + 2}_x}^{2\sigma + 2} ds\\
    &\quad+ m \int_0^t (C(\phi)+E(u(s)))^{m-1}T_1(s)ds + \frac{1}{2} m(m-1) \int_0^t (C(\phi)+E(u(s)))^{m-2} T_2^2(s) ds\\
    &\quad+ m \int_0^t (C(\phi)+E(u(s)))^{m-1} T_2(s) dB(s).
\end{align*}
Note that 
\begin{align*}
    (C(\phi)+E(u(t)))^m &\leq (C(\phi)+E(u(0)))^m + m(2-\sigma n) \int_0^t (1+s)^{-1} (C(\phi)+E(u(s)))^m ds\\
    &\quad+ m\int_0^t (C(\phi)+E(u(s)))^{m-1}T_1(s)ds + \frac{1}{2} m(m-1) \int_0^t (C(\phi)+E(u(s)))^{m-2} T_2^2(s)ds\\
    &\quad+ m \int_0^t (C(\phi)+E(u(s)))^{m-1} T_2(s)dB(s).
\end{align*}
Note also that for any $t>0$, the stochastic integral term is a continuous martingale. We shall handle the terms $T_1$ and $T_2$ next. From Lemma \ref{T12lemma},
\begin{align*}
    T_1(s) &= \int_{\mathbb{R}^{n}}\vert x\vert^{2}\vert\phi\vert^{2}g^{2}(s)dx-4\Im\int_{\mathbb{R}^{n}} (1+s)\overline{\phi} x\cdot\nabla\phi g^{2}dx+4\int_{\mathbb{R}^{n}}(1+s)^{2} \vert\nabla\phi\vert^{2}g^{2}dx\\
    &\quad+4\int_{\mathbb{R}^{n}}(1+s)^{2}\vert u\vert^{2\sigma}\vert\phi g\vert^{2}dx+2\sigma\int_{\mathbb{R}^{n}}(1+s)^{2}\vert u\vert^{2\sigma-2}g^{2}(\Im(u\overline{\phi}))^{2}dx \\&\quad=: I + II + III + IV + V.
\end{align*}
By the constraints imposed on $\phi$, we see that $I+|II|+III \leq C(\phi)(1+s)^2 g^2(s)$. For $IV$, Hölder's inequality and Young's inequality yields
\begin{align*}
    IV &\leq 4(1+s)^2 g^2(s) \Vert u(s) \Vert_{L^{2\sigma + 2}_x}^{2\sigma} \Vert \phi \Vert_{L^{2\sigma+2}_x}^2 \leq (1+s)^{2}g^2(s)\left(C(\phi) + \frac{1}{2\sigma + 2} \Vert u(s) \Vert_{L^{2\sigma + 2}_x}^{2\sigma + 2}\right)\\& \leq (1+s)^2 g^2(s)\left( C(\phi)+E(u(s))\right).
\end{align*}
Finally, for $V$, again using Hölder's inequality and Young's inequality yields
\begin{equation*}
    V \leq (1+s)^2 g^2(s)\left( C(\phi)+E(u(s))\right).
\end{equation*}
From these observations, we conclude that $T_1(s) \lesssim (C(\phi)+E(u(s)))(1+s)^2 g^2(s)$. Recalling from Lemma \ref{T12lemma}, we write
\begin{align*}
    T_2(s) &= \int_{\mathbb{R}^{n}}\vert x\vert^{2}\overline{u}\phi gdx+\int (1+s)u\overline{\phi}g dx+\int (1+s)ux\cdot\nabla\overline{\phi}g dx \\
    &\quad+\int(1+s)^{2}\Delta\overline{u}\phi g dx -\int(1+s)^{2}\vert u\vert^{2\sigma}\overline{u}\phi g dx \\&=: I + II + III + IV + V.
\end{align*}
For $IV$, we integrate by parts and use Young's inequality to obtain
\begin{equation*}
    \left\vert \int(1+s)^{2}\Delta\overline{u}\phi g dx \right\vert = \left\vert \int(1+s)^{2}\nabla \overline{u} \cdot \nabla \phi g dx \right\vert \leq (C(\phi) + E(u(s)))(1+s)^{2} \vert g(s) \vert.
\end{equation*}
Similar arguments applied to the remaining terms using Hölder's inequality and Young's inequality also shows that $T_2(s) \lesssim (C(\phi)+E(u(s)))(1+s)^2 \vert g(s)\vert$. Combining the above estimates, we obtain
\begin{align*}
    (C(\phi)+E(u(t)))^m &\leq (C(\phi)+E(u(0))^m + m(2-\sigma n) \int_0^t (1+s)^{-1} (C(\phi)+E(u(s)))^{m}ds\\
    &\quad+ Cm\int_0^t (C(\phi)+E(u(s)))^{m}(1+s)^2 g^2(s)ds\\
    &\quad+C\frac{m(m-1)}{2} \int_0^t (C(\phi)+E(u(s)))^{m}(1+s)^4 g^2(s)ds\\
    &\quad+ Cm \int_0^t (C(\phi)+E(u(s)))^{m-1} T_2(s)dB(s).
\end{align*}
We can rewrite the above inequality as
\begin{align*}
     &(C(\phi)+E(u(t)))^m \\&\leq C(m) \Vert u_0 \Vert_{\Sigma}^{2m(\sigma + 1)} + \int_0^t (C(\phi)+E(u(s)))^{m} \left[ m(2-\sigma n) (1+s)^{-1} + \frac{m(m+1)}{2}(1+s)^4 g^2(s) \right] ds + M_t
\end{align*}
where $M_t$ stands for the stochastic integral we remarked earlier as a continuous martingale. We are now ready to apply the stochastic Gronwall's inequality. Let $0 < q < p <1$. Then we have
\begin{align*}
    &\mathbb{E}\left[\sup\limits_{0 \leq t \leq \tau} (C(\phi)+E(u(t)))^{mq} \right]^{1/q}\\& \leq C(m) \left( \frac{p}{p-q} \right)^{1/q} \Vert u_0 \Vert_{\Sigma}^{2m(\sigma + 1)}  \left( \mathbb{E}\left[\exp\left( \frac{p}{1-p} \int_0^\tau \frac{m(2-\sigma n)}{(1+s)^{-1}}+\frac{m(m+1)}{2}(1+s)^4 g^2(s) ds \right) \right] \right)^{\frac{1-p}{p}} \\
    &\leq C(m) \Vert u_0 \Vert_{\Sigma}^{2m(\sigma + 1)} \left( \frac{p}{p-q} \right)^{1/q} (1+\tau)^{m(2-\sigma n)}.
\end{align*}
Now take $q = \frac{1}{m}$. Taking the $q$th power yields
\begin{equation*}
    \mathbb{E}\left[\sup\limits_{0 \leq t \leq \tau} (C(\phi)+E(u(t))) \right] \leq C(m,p) \Vert u_0 \Vert_{\Sigma}^{2\sigma + 2} (1+\tau)^{2 - n \sigma}.
\end{equation*}
The remaining case, mass-critical or super-critical nonlinearities, can be demonstrated similarly. The only difference is that the coefficient $\frac{4m(2-n\sigma)}{\sigma+1}$ of the integral involving $\Vert u(s)\Vert_{L^{2\sigma+2}_{x}}^{2\sigma+2}$ is nonpositive in this case. Thus, we get
\begin{align*}
    &(C(\phi)+E(u(t)))^m \\&\leq (C(\phi)+E(u(0)))^m + m\int_0^t (C(\phi)+E(u(s)))^{m-1}T_1(s)ds\\
    &\quad+ \frac{1}{2} m(m-1) \int_0^t (C(\phi)+E(u(s)))^{m-2} T_2^2(s)ds + m \int_0^t (C(\phi)+E(u(s)))^{m-1} T_2(s)dB(s),
\end{align*}
then the argument as in the mass sub-critical case shows that
\begin{equation*}
    \mathbb{E}\left[\sup\limits_{0 \leq t \leq \tau} (C(\phi)+E(u(t))) \right] \leq C(m,p) \Vert u_0 \Vert_{\Sigma}^{2\sigma + 2} <\infty.
\end{equation*}
\end{proof}
\begin{Remark}
    As we are interested in the growth rate of $\sup\limits_{0 \leq t \leq \tau} E(u(t))$ for large $\tau$, for any $r\geq 1$, choosing $m>r$ and setting $q=\frac{r}{m}$ in the above proof, the following inequality can be shown to hold for any $\tau>0$ and $2\sigma<\frac{4}{n}$:
    \begin{equation*}
        \mathbb{E}\left[\sup_{0\leq t\leq\tau}E^{r}(u(t))\right]\leq C(\Vert u_{0}\Vert_{\Sigma},\phi)(1+\tau)^{r(2-n\sigma)}.
    \end{equation*}
    Also, for $\frac{4}{n}\leq 2\sigma<\frac{4}{n-2}$ if $n\geq 3$, or $\frac{4}{n}\leq 2\sigma<\infty$ if $n=1,2$, we have
    \begin{equation*}
        \mathbb{E}\left[\sup_{0\leq t\leq\tau}E^{r}(u(t))\right]\leq C(\Vert u_{0}\Vert_{\Sigma},\phi)<\infty.
    \end{equation*}
    
\end{Remark}
We shall now examine the behavior of the solution $u_{*}$ to the random equation \eqref{eq:additive_random_eqn} and $E(u(s))$ under the pseudo-conformal transformation 
\begin{equation*}
    \widetilde{u}(t,x)=(1-t)^{-\frac{n}{2}}u\left(\frac{t}{1-t},\frac{x}{1-t}\right)e^{i\frac{\vert x\vert^{2}}{4(1-t)}}.
\end{equation*}
Applying this transformation to the random equation \eqref{eq:additive_random_eqn}, we see that $\widetilde{u}_{*}$ satisfies:
\begin{equation}\label{eq:random_eqn_after_pseudo-conf_tr}
    i\partial_{t}\widetilde{u}_{*}-\Delta \widetilde{u}_{*}+(1-t)^{\sigma n-2}\vert \widetilde{u}_{*}+\widetilde{z}_{*}\vert^{2\sigma}(\widetilde{u}_{*}+\widetilde{z}_{*})=0,
\end{equation}
or
\begin{equation*}
    (1-t)^{2-\sigma n}(i\partial_{t}\widetilde{u}_{*}-\Delta \widetilde{u}_{*})+\vert \widetilde{u}_{*}+\widetilde{z}_{*}\vert^{2\sigma}(\widetilde{u}_{*}+\widetilde{z}_{*})=0.
\end{equation*}
The global-in-time Strichartz estimates for the solution $\widetilde{u}_{*}$ to \eqref{eq:random_eqn_after_pseudo-conf_tr} are established in the next theorem. The proof relies on exploiting the growth rate of the pseudo-conformal energy.
\begin{theorem}\label{thm:global_Strichartz}
    Let $\sigma(n)<2\sigma<\frac{4}{n-2}$ if $n\geq 3$, and $\sigma(n)<2\sigma<\frac{4}{n}$ or $\frac{4}{n} < 2\sigma < \infty$ if $n=1,2$, where $\sigma(n)$ is as in \eqref{eq:Strauss_exponent}. Assume that the time decay condition on $g$ given in Theorem \ref{thm:result_Sigma_scattering} also holds. Then, for any Strichartz admissible pair $(p,q)$, we have
    \begin{equation}\label{eq:global_Strichartz_for_random_eqn}
        \Vert \widetilde{u}_{*}\Vert_{L^{q}_{t}W^{1,p}_{x}([0,1)\times\mathbb{R}^{n})}+\Vert\vert\cdot\vert \widetilde{u}_{*}\Vert_{L^{q}_{t}L^{p}_{x}([0,1)\times\mathbb{R}^{n})}<\infty,\quad a.s.
    \end{equation}
    Moreover, for $2\sigma = \frac{4}{n}$ when $n=1,2$, the estimate \eqref{eq:global_Strichartz_for_random_eqn} is also valid if we impose the small data assumption \eqref{eq:bound_of_data_for_induction} as well. 
\end{theorem}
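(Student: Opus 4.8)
The plan is to establish \eqref{eq:global_Strichartz_for_random_eqn} by a Strichartz bootstrap on the transformed equation \eqref{eq:random_eqn_after_pseudo-conf_tr} over $[0,1)$, where the pseudo-conformal energy growth estimates \eqref{eq:growth_of_psedo_energy_mass_sub-critical}--\eqref{eq:growth_of_psedo_energy_mass_super-critical} supply the a priori control needed to absorb the singular-in-time nonlinearity. First I would write the Duhamel formula
\begin{equation*}
\widetilde u_*(t)=S(t)\widetilde u_*(0)+i\int_0^t S(t-s)(1-s)^{\sigma n-2}\vert\widetilde u_*+\widetilde z_*\vert^{2\sigma}(\widetilde u_*+\widetilde z_*)(s)\,ds,
\end{equation*}
and record the algebraic identity produced by the transformation applied to \eqref{eq:pseudo-conf_energy}, namely
\begin{equation*}
E(u(s))\sim\Vert\nabla\widetilde u(t)\Vert_{L^2_x}^2+(1-t)^{\sigma n-2}\Vert\widetilde u(t)\Vert_{L^{2\sigma+2}_x}^{2\sigma+2},\qquad t=\tfrac{s}{1+s},
\end{equation*}
where $\widetilde u=\widetilde u_*+\widetilde z_*$. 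Converting the moment bounds of the growth proposition into pathwise statements by a Markov/Borel--Cantelli argument (exactly as for $z_*$ preceding \eqref{eq:almost_sure_time_decay_of_tail}), I obtain almost sure bounds on $\sup_{0\le s\le\tau}E(u(s))$ matching the growth rates of \eqref{eq:growth_of_psedo_energy_mass_sub-critical}--\eqref{eq:growth_of_psedo_energy_mass_super-critical} up to an arbitrarily small polynomial loss. The crucial consequence is that, even in the mass-subcritical regime where the kinetic part $\Vert\nabla\widetilde u(t)\Vert_{L^2_x}$ blows up like $(1-t)^{(\sigma n-2)/2}$, the potential part forces $\Vert\widetilde u(t)\Vert_{L^{2\sigma+2}_x}$ to stay bounded up to an arbitrarily small power of $(1-t)^{-1}$, because the growth of $E$ is exactly matched by the weight $(1-t)^{\sigma n-2}$.

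With this a priori bound I would estimate the Duhamel term by the dual Strichartz inequality of Lemma \ref{stichartzlemma}, working with the admissible pair $(p,q)=(2\sigma+2,\tfrac{4\sigma+4}{n\sigma})$ as in Proposition \ref{prop_Strichartz_estimate_for_sigma_space}. Distributing the derivative so that the growing factor $\nabla\widetilde u_*$ sits in the Strichartz norm while the bounded factor carries the weight, Hölder in space gives $\Vert\langle\nabla\rangle(\vert\widetilde u\vert^{2\sigma}\widetilde u)\Vert_{L^{p'}_x}\lesssim\Vert\widetilde u\Vert_{L^{2\sigma+2}_x}^{2\sigma}\Vert\widetilde u\Vert_{W^{1,2\sigma+2}_x}$, and Hölder in time separates the weight into an $L^r_s$ factor with $\tfrac1r=1-\tfrac2q$. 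The remaining time integral $\int_0^1(1-s)^{(\sigma n-2)r}\,ds$ converges precisely when $n(2\sigma)^2+(n-2)(2\sigma)-4>0$, which by \eqref{eq:Strauss_exponent} is exactly the Strauss condition $2\sigma>\sigma(n)$; the arbitrarily small loss from the a priori bound is harmless because this is a strict inequality. I would then split $[0,1)$ into finitely many subintervals on which the relevant Strichartz norm of $\widetilde u_*$ is small---the number of pieces being finite by the polynomial growth of $E$ together with \eqref{eq:global_Strichartz_est_for_tail} for $\widetilde z_*$---and close the estimate on each piece by absorbing the self-interaction term, summing to obtain $\Vert\widetilde u_*\Vert_{L^q_tW^{1,p}_x([0,1))}<\infty$. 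A further application of Lemma \ref{stichartzlemma} upgrades this to every admissible pair.

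For the weighted quantity $\Vert\vert\cdot\vert\widetilde u_*\Vert_{L^q_tL^p_x}$ I would apply the commutation relation $x_jS(t)=S(t)(x_j+2it\partial_{x_j})$ to \eqref{eq:random_eqn_after_pseudo-conf_tr} and rerun the scheme, controlling the additional terms generated by $x_j$ through the $W^{1,p}_x$ bound already obtained and the boundedness of $\Vert\vert\cdot\vert\widetilde z_*\Vert$ inherited from the stochastic tail decay \eqref{eq:almost_sure_time_decay_of_tail}, in the manner of Proposition \ref{prop_Strichartz_estimate_for_sigma_space}. The borderline case $2\sigma=\tfrac4n$ for $n=1,2$ offers no smallness from either the weight (then identically $1$) or the energy, so there the partition must instead be seeded by the small-data hypothesis \eqref{eq:bound_of_data_for_induction} and the estimate closed through Lemma \ref{lemma:uniform_bounded_by_its_higher_powers}.

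The main obstacle I anticipate is twofold. First, the gradient nonlinear estimate genuinely involves the blowing-up kinetic norm, so the whole argument hinges on arranging the Hölder exponents so that only the bounded $L^{2\sigma+2}_x$ norm multiplies the singular weight $(1-t)^{\sigma n-2}$; it is this bookkeeping, rather than mere mass-subcriticality, that forces the Strauss threshold and makes its appearance unavoidable. Second, since $\widetilde z_*$ is neither adapted nor a martingale, no Itô calculus is available on $[0,1)$, so every occurrence of $\widetilde z_*$ must be handled pathwise and routed exclusively through the almost sure decay \eqref{eq:almost_sure_time_decay_of_tail}; ensuring the resulting pathwise constants are finite while keeping the small losses compatible with the strict Strauss inequality is the most delicate part of the bookkeeping.
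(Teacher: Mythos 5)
Your architecture matches the paper's: Duhamel on the transformed equation, an a priori bound on the potential part $\Vert\widetilde u\Vert_{L^{2\sigma+2}_x}$ coming from the pseudo-conformal energy, a Hölder/Strichartz bootstrap built on the pair $(2\sigma+2,\tfrac{4\sigma+4}{n\sigma})$ in which the singular weight is integrable exactly under the Strauss condition (your algebra $n(2\sigma)^2+(n-2)(2\sigma)-4>0$ is the paper's \eqref{eq:derivation_Strauss_exp}), the commutation relation $x_jS(t)=S(t)(x_j+2it\partial_{x_j})$ for the weighted norm, and the small-data induction via Lemma \ref{lemma:uniform_bounded_by_its_higher_powers} seeded by \eqref{eq:bound_of_data_for_induction} when $2\sigma=\tfrac4n$, $n=1,2$. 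The one genuinely different ingredient is \emph{how} you obtain the pathwise a priori bound. The paper never converts the moment estimates \eqref{eq:growth_of_psedo_energy_mass_sub-critical}--\eqref{eq:growth_of_psedo_energy_mass_super-critical} into almost sure statements; instead it works directly with the pathwise Itô expansion of Lemma \ref{T12lemma}: in the mass-subcritical range it forms $\widetilde E_2=(1-t)^{2-\sigma n}\widetilde E_1$, whose expansion has the sign-definite term $(4\sigma n-8)\int(1-t)^{1-\sigma n}\Vert\nabla\widetilde u\Vert_{L^2_x}^2\,dt\le 0$ that can simply be discarded, leaving only $E(u(0))$ plus the corrections $\int_0^\infty\vert T_1\vert\,ds$ and $\sup_T\vert\int_0^T T_2\,dB\vert$, which are a.s. finite; in the critical/supercritical ranges $\widetilde E_1$ itself works since $2-n\sigma\le 0$. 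This yields the lossless bounds \eqref{eq:bound_potential_energy} (resp. $\sup_{t<1}\Vert\widetilde u\Vert_{H^1_x}<\infty$). Your Markov/Borel--Cantelli route, using the all-moments Remark after the growth proposition, yields the same bounds only up to a factor $(1-t)^{-\epsilon}$; as you correctly argue, this is harmless because the Strauss inequality (Case 1) and $n\sigma-2>0$ (supercritical) are strict, and at the $n\ge3$ mass-critical endpoint the loss costs only $\epsilon<1$ in the time integral. So both routes close; the paper's buys exact uniform bounds and avoids any $\epsilon$-bookkeeping, yours reuses already-proved moment estimates at the price of tracking small losses through every Hölder exponent.

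Two cautions. First, your partition of $[0,1)$ must be seeded by quantities known a priori to be finite --- the convergent weighted time integral (absolute continuity) and the Strichartz norms of $\widetilde z_*$ from \eqref{eq:global_Strichartz_est_for_tail} --- not by smallness of the Strichartz norm of $\widetilde u_*$ itself, which is what you are trying to prove; your parenthetical suggests you intend the former, but as literally phrased the partition criterion is circular (the paper avoids this by taking a single $\widetilde T$ close to $1$, handling $[0,\widetilde T]$ by local theory, and absorbing via \eqref{eq:general_inequality_for_bootstrap}). Second, your write-up compresses the paper's Cases 2 and 3 (mass-critical/supercritical for $n\ge3$, and supercritical for $n=1,2$, where the paper switches to $\widetilde E_1$, an $H^1$ bound, and either the argument of Corollary 2.4 of \cite{Herr_2019} or the $\theta$-interpolated $Z$-spaces with smallness from $(1-\widetilde T)^{n\sigma-2}$) into the same single bootstrap; this does go through formally because the weight-integrability condition is automatic there, but those cases still need to be written out rather than subsumed silently.
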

\begin{proof}
The local-in-time Strichartz estimates for $\widetilde{u}_{*}$ on the time interval $[0,\widetilde{T})$, for any $0<\widetilde{T}<1$, can be directly established by adapting the proof of Lemma 2.1 from \cite{Herr_2019}. Alternatively, the proof of Proposition \ref{prop_Strichartz_estimate_for_sigma_space} may also be adjusted to obtain the local estimates. Note that in both approaches, the implicit constants in the estimates depend on $\widetilde{T}$ which may tend to $\infty$ as $\widetilde{T}\to 1^-$, due to the asymptotic behavior of the pseudo-conformal transformation of $u_{*}$, which is $\widetilde{u}_{*}(t)$, observed at $t=1$. Thus, for $\widetilde{T} < 1$, we will focus on the time interval $[\widetilde{T},1)$, where we refine the local estimates to derive global space-time estimates. We will prove the result separately for the mass sub-critical case and the remaining range of nonlinearities.

\noindent
\textbf{Case 1:} Let $\sigma(n)<2\sigma<\frac{4}{n}$ for $n\geq 1$. \\ Define the identity
\begin{equation*}
    \widetilde{E}_{1}(\widetilde{u}(t)):=4\Vert\nabla\widetilde{u}(t)\Vert_{L^{2}_{x}}^{2}+(1-t)^{\sigma n-2}\frac{4}{\sigma+1}\Vert\widetilde{u}(t)\Vert_{L^{2\sigma+2}}^{2\sigma+2},\quad 0\leq t<1.
\end{equation*}
Note that
\begin{equation*}
     \widetilde{E}_{1}(\widetilde{u}(t))=E(u(s))\quad\text{for}\,\, s=\frac{t}{1-t}.
\end{equation*}
Thus, writing $T=\frac{\widetilde{T}}{1-\widetilde{T}}$ gives that
\begin{align*}
    \widetilde{E}_{1}(\widetilde{u}(\widetilde{T}))-\widetilde{E}_{1}(\widetilde{u}(0))&=E(u(T))-E(u(0))\\
    &=\frac{4(2-n\sigma)}{\sigma+1}\int_{0}^{T}(1+s)\Vert u(s)\Vert_{L^{2\sigma+2}_{x}}^{2\sigma+2}ds+\int_{0}^{T}T_{1}(s)ds+\int_{0}^{T}T_{2}(s)dB\\
    &=\frac{4(2-n\sigma)}{\sigma+1}\int_{0}^{\widetilde{T}}(1-t)^{\sigma n-3}\Vert\widetilde{u}(t)\Vert_{L^{2\sigma+2}_{x}}^{2\sigma+2}dt+\int_{0}^{T}T_{1}(s)ds+\int_{0}^{T}T_{2}(s)dB,
\end{align*}
where in the last line, we make use of the following identity:
\begin{equation*}
    \Vert u(s)\Vert_{L^{2\sigma+2}_{x}}^{2\sigma+2}=(1-t)^{\sigma n}\Vert\widetilde{u}(t)\Vert_{L^{2\sigma+2}_{x}}^{2\sigma+2}.
\end{equation*}
We shall define another identity as follows
\begin{equation*}
    \widetilde{E}_{2}(\widetilde{u}(t)):=(1-t)^{2-\sigma n}\widetilde{E}_{1}(\widetilde{u}(t))=4(1-t)^{2-\sigma n}\Vert\nabla\widetilde{u}(t)\Vert_{L^{2}_{x}}^{2}+\frac{4}{\sigma+1}\Vert\widetilde{u}(t)\Vert_{L^{2\sigma+2}}^{2\sigma+2}.
\end{equation*}
Then, the following observation 
\begin{equation*}
    d(\widetilde{E}_{2})=(\sigma n-2)(1-t)^{1-\sigma n}\widetilde{E}_{1}dt+(1-t)^{2-\sigma n}d\widetilde{E}_{1}
\end{equation*}
implies that
\begin{align*}
    \widetilde{E}_{2}(\widetilde{u}(\widetilde{T}))&=\widetilde{E}_{2}(\widetilde{u}(0))+(\sigma n-2)\int_{0}^{\widetilde{T}}(1-t)^{1-\sigma n}\widetilde{E}_{1}(\widetilde{u}(t))dt+\frac{4(2-n\sigma)}{\sigma+1}\int_{0}^{\widetilde{T}}(1-t)^{-1}\Vert\widetilde{u}(t)\Vert_{L^{2\sigma+2}_{x}}^{2\sigma+2}dt\\
    &\quad+\int_{0}^{T}(1+s)^{\sigma n-2}T_{1}(s)ds+\int_{0}^{T}(1+s)^{\sigma n-2}T_{2}(s)dB\\
    &=\widetilde{E}_{2}(\widetilde{u}(0))+(4\sigma n-8)\int_{0}^{\widetilde{T}}(1-t)^{1-\sigma n}\Vert\nabla\widetilde{u}(t)\Vert_{L^{2}_{x}}^{2}dt\\&\quad+\int_{0}^{T}(1+s)^{\sigma n-2}T_{1}(s)ds+\int_{0}^{T}(1+s)^{\sigma n-2}T_{2}(s)dB\\
    &\lesssim\Vert u_{0}\Vert_{\Sigma}^2 +\sup_{0\leq s<\infty}\left(\int_{0}^{T}\vert T_{1}(s)\vert ds+\left\vert\int_{0}^{T}T_{2}(s)dB\right\vert\right)<\infty,\quad a.s.
\end{align*}
In the last step, we use the fact that $(4\sigma n-8)<0$ and the integral with coefficient $(4\sigma n-8)$ is nonnegative. Since $\Vert\widetilde{u}(t)\Vert_{L^{2\sigma+2}_{x}}\lesssim\widetilde{E}_{2}(\widetilde{u}(t))$, we have 
\begin{equation*}
    \sup_{0\leq t<1}\Vert\widetilde{u}(t)\Vert_{L^{2\sigma+2}_{x}}<\infty,\quad a.s.
\end{equation*}
Recall that $\widetilde{u}=\widetilde{u}_{*}+\widetilde{z}_{*}$ where $\widetilde{z}_{*}$ is the pseudo-conformal transformation of the tail of the stochastic convolution. Then taking $0<\widetilde{T}<1$, changing the time variable and recalling \eqref{eq:almost_sure_time_decay_of_tail}, the following inequality holds almost surely:
\begin{align*}
    \sup_{\widetilde{T}\leq t<1}\Vert\widetilde{u}_{*}(t)\Vert_{L^{2\sigma+2}_{x}}&\leq\sup_{\widetilde{T}\leq t<1}\Vert\widetilde{u}(t)\Vert_{L^{2\sigma+2}_{x}}+\sup_{\widetilde{T}\leq t<1}\Vert\widetilde{z}_{*}(t)\Vert_{L^{2\sigma+2}_{x}}\\
    &\lesssim 1+\sup_{T\leq s<\infty}(1+s)^{\sigma n}\Vert z_{*}(s)\Vert_{L^{2\sigma+2}_{x}}\\
    &\lesssim 1+\sup_{T\leq s<\infty}(1+s)^{\sigma n}\langle s\rangle^{-\alpha+\frac{1}{2}},
\end{align*}
where $T=\frac{\widetilde{T}}{1-\widetilde{T}}$ and $\vert g(s)\vert\sim\langle s\rangle^{-\alpha}$. Note that  we have assumed $\langle s\rangle^{2}g=o(s^{-1/2})$ a.s., which is the case as long as we have $\alpha>\frac{5}{2}$. Therefore, to make the last term above finite, we need
\begin{equation*}
    -\alpha+n\sigma+\frac{1}{2}\leq 0.
\end{equation*}
Since $\sigma<\frac{2}{n}$, we have
\begin{equation*}
    -\alpha+n\sigma+\frac{1}{2}<-\alpha+\frac{5}{2}<0
\end{equation*}
as claimed, which guarantees that
\begin{equation}\label{eq:bound_potential_energy}
   \sup_{\widetilde{T}\leq t<1}\Vert\widetilde{u}_{*}(t)\Vert_{L^{2\sigma+2}_{x}}<\infty, \quad a.s.
\end{equation}
Next we are to show the following Strichartz estimates for $\widetilde{T}$ close to $1$:
\begin{equation}\label{eq:global_in_time_Strichartz}
    \Vert \widetilde{u}_{*}(t)\Vert_{L^{q}_{t}W^{1,p}_{x}([\widetilde{T},1)\times\mathbb{R}^{n})}+\Vert \vert\cdot\vert \widetilde{u}_{*}(t)\Vert_{L^{q}_{t}L^{p}_{x}([\widetilde{T},1)\times\mathbb{R}^{n})}\leq C(\Vert\widetilde{u}_{*}(\widetilde{T})\Vert_{\Sigma})<\infty.
\end{equation}
For this purpose, recall the Duhamel representation of $\widetilde{u}_{*}(t)$ for $t\geq\widetilde{T}$:
\begin{equation}\label{eq:duhamel_for_pseudo}
    \widetilde{u}_{*}(t)=S(t)\widetilde{u}_{*}(\widetilde{T})+i\int_{\widetilde{T}}^{t}S(t-t')(1-t)^{\sigma n-2}\vert\widetilde{u}_{*}+\widetilde{z}_{*}\vert^{2\sigma}(\widetilde{u}_{*}+\widetilde{z}_{*})(t')dt'.
\end{equation}
In the rest of the proof, the spacetime norms $L^{q}_{t}W^{s,p}_{x}$, for $p,q\geq 1$ and $s\in\mathbb{R}$, will be taken over the set $[\widetilde{T},1)\times\mathbb{R}^{n}$. Below we will take advantage of the Strichartz admisssible pair $(p_{1},q_{1})=\left(2\sigma+2,\frac{4\sigma+4}{n\sigma}\right)$. Thus, for any Strichartz admissible pair $(p,q)$, applying Lemma \ref{stichartzlemma} to the Duhamel formula \eqref{eq:duhamel_for_pseudo} and using Leibniz rule, we have
\begin{equation}\label{LqW1pnorm}
\begin{aligned}
\Vert\widetilde{u}_{*}\Vert_{L^{q}_{t}W^{1,p}_{x}}&\lesssim\Vert \widetilde{u}_{*}(\widetilde{T})\Vert_{H^{1}_{x}}+\left\Vert(1-t)^{\sigma n-2}\langle\nabla\rangle\left(\vert\widetilde{u}_{*}+\widetilde{z}_{*}\vert^{2\sigma}(\widetilde{u}_{*}+\widetilde{z}_{*})\right)\right\Vert_{L^{q_{1}'}_{t}L^{p_{1}'}_{x}}\\
    &\lesssim\Vert \widetilde{u}_{*}(\widetilde{T})\Vert_{H^{1}_{x}}+\Vert(1-t)^{\sigma n-2}\,\widetilde{u}_{*}^{2\sigma}(\langle\nabla\rangle\widetilde{u}_{*})\Vert_{L^{q_{1}'}_{t}L^{p_{1}'}_{x}}+\Vert(1-t)^{\sigma n-2}\,\widetilde{u}_{*}^{2\sigma-1}\widetilde{z}_{*}(\langle\nabla\rangle\widetilde{u}_{*})\Vert_{L^{q_{1}'}_{t}L^{p_{1}'}_{x}}\\
    &\quad+\Vert(1-t)^{\sigma n-2}\,\widetilde{u}_{*}^{2\sigma}(\langle\nabla\rangle\widetilde{z}_{*})\Vert_{L^{q_{1}'}_{t}L^{p_{1}'}_{x}}+\Vert(1-t)^{\sigma n-2}\,\widetilde{z}_{*}^{2\sigma}(\langle\nabla\rangle\widetilde{u}_{*})\Vert_{L^{q_{1}'}_{t}L^{p_{1}'}_{x}}\\
    &\quad+\Vert(1-t)^{\sigma n-2}\,\widetilde{z}_{*}^{2\sigma-1}\widetilde{u}_{*}(\langle\nabla\rangle\widetilde{z}_{*})\Vert_{L^{q_{1}'}_{t}L^{p_{1}'}_{x}}+\Vert(1-t)^{\sigma n-2}\,\widetilde{z}_{*}^{2\sigma}(\langle\nabla\rangle\widetilde{z}_{*})\Vert_{L^{q_{1}'}_{t}L^{p_{1}'}_{x}}.
\end{aligned}
\end{equation}
 Recalling \eqref{eq:almost_sure_time_decay_of_tail} that $\widetilde{z}_{*}(t)$ almost surely obeys the Strichartz estimates, we have
\begin{align*}
    \Vert(1-t)^{\sigma n-2}\,\widetilde{u}_{*}^{2\sigma}(\langle\nabla\rangle\widetilde{u}_{*})\Vert_{L^{q_{1}'}_{t}L^{p_{1}'}_{x}}&\lesssim\Vert(1-t)^{\sigma n-2}\widetilde{u}_{*}^{2\sigma}\Vert_{L^{\frac{2\sigma+2}{2\sigma+2-n\sigma}}_{t}L^{\frac{2\sigma+2}{2\sigma}}_{x}}\Vert\langle\nabla\rangle\widetilde{u}_{*}\Vert_{L^{q_{1}}_{t}L^{p_{1}}_{x}}\\
    &=\Vert(1-t)^{\frac{\sigma n-2}{2\sigma}}\widetilde{u}_{*}\Vert_{L^{\frac{2\sigma(2\sigma+2)}{2\sigma+2-n\sigma}}_{t}L^{2\sigma+2}_{x}}^{2\sigma}\Vert\langle\nabla\rangle\widetilde{u}_{*}\Vert_{L^{q_{1}}_{t}L^{p_{1}}_{x}}\\
    &\leq \Vert(1-t)^{-1}\Vert_{L^{\frac{(2-n\sigma)(2\sigma+2)}{2\sigma+2-n\sigma}}_{t}}^{2-n\sigma}\sup_{\widetilde{T}\leq t<1}\Vert\widetilde{u}_{*}\Vert_{L^{2\sigma+2}_{x}}^{2\sigma}\Vert\langle\nabla\rangle\widetilde{u}_{*}\Vert_{L^{q_{1}}_{t}L^{p_{1}}_{x}}.
\end{align*}
The middle term in the last line is bounded due to \eqref{eq:bound_potential_energy}. Also, the first term is finite as long as
\begin{equation*}
    \frac{(2-n\sigma)(2\sigma+2)}{2\sigma+2-n\sigma}<1,
\end{equation*}
that is,
\begin{equation}\label{eq:derivation_Strauss_exp}
    \frac{(2-n\sigma)(2\sigma+2)}{2\sigma+2-n\sigma}-1=\frac{2+2\sigma-2n\sigma^{2}-n\sigma}{2\sigma+2-n\sigma}<0
\end{equation}
which holds provided that $2\sigma>\frac{2-n+\sqrt{n^2+12n+4}}{2n}=\sigma(n)$. This affirms the requirement of the Strauss exponent \eqref{eq:Strauss_exponent} as the lower bound for $2\sigma$ in Theorem \ref{thm:result_Sigma_scattering}. Note that $\sigma(n) > \frac{2}{n}$ for all $n\geq 1$. Now, together with the energy sub-criticality assumption, we need the following requirement
\begin{equation*}
    \sigma(n)<2\sigma<\frac{4}{n}.
\end{equation*}
With this assumption, the first term can be made arbitrarily small by choosing $\widetilde{T}$ sufficiently close to $1$. The remaining nonlinear terms can be controlled in a similar fashion. Thus, we have
\begin{equation}\label{eq:general_inequality_for_bootstrap}
    \Vert\widetilde{u}_{*}\Vert_{L^{q}_{t}W^{1,p}_{x}}\lesssim\Vert\widetilde{u}_{*}(\widetilde{T})\Vert_{H^{1}_{x}}+\varepsilon(\widetilde{T})(1+\Vert\widetilde{u}_{*}\Vert_{L^{q_{1}}_{t}W^{1,p_{1}}_{x}}),
\end{equation}
where $\varepsilon(\widetilde{T})$ depends on $\Vert \widetilde{z}_{*}\Vert_{L^{2\sigma +2}_x}$ and $\Vert(1-t)^{-1}\Vert_{L^{\frac{(2-n\sigma)(2\sigma+2)}{2\sigma+2-n\sigma}}_{t}([\widetilde{T},1))}^{2-n\sigma}$, and tends to $0$ as $\widetilde{T}\to 1^-$. Then setting $(p,q)=(p_{1},q_{1})$ in \eqref{eq:general_inequality_for_bootstrap}, we get that \begin{align}\label{epsilondep}
\Vert\widetilde{u}_{*}\Vert_{L^{q_{1}}_{t}W^{1,p_{1}}_{x}}\lesssim (1-\varepsilon(\widetilde{T}))^{-1}\big[\Vert\widetilde{u}_{*}(\widetilde{T})\Vert_{H^{1}_{x}}+\varepsilon(\widetilde{T})\big]. 
\end{align} 
Thus, substituting \eqref{epsilondep} into \eqref{eq:general_inequality_for_bootstrap} gives that for any admissible pair $(p,q)$, $\Vert\widetilde{u}_{*}\Vert_{L^{q}_{t}W^{1,p}_{x}}\lesssim C(\Vert\widetilde{u}_{*}(\widetilde{T})\Vert_{H^{1}_{x}})$ for sufficiently small $\varepsilon
(\widetilde{T})$. The remaining dual Strichartz norms involving $\widetilde{z}_{*}$ in \eqref{LqW1pnorm} can be treated in the same way.

Next, we will estimate the variance norm. Recalling the commutation relation $x_j S(t) = S(t) (x_j + 2it \partial_{x_j})$, we write
\begin{align*}
    &\Vert x_{j}\widetilde{u}_{*}\Vert_{L^{q}_{t}L^{p}_{x}}\\&\lesssim\Vert x_{j}\widetilde{u}_{*}(\widetilde{T})\Vert_{L^{2}_{x}}+\widetilde{T}\Vert \widetilde{u}_{*}(\widetilde{T})\Vert_{H^{1}_{x}}+\Vert(1-t)^{\sigma n-2}\,\widetilde{u}_{*}^{2\sigma}(x_{j}\widetilde{u}_{*})\Vert_{L^{q_{1}'}_{t}L^{p_{1}'}_{x}}\\
    &\quad+\Vert(1-t)^{\sigma n-2}\,\widetilde{u}_{*}^{2\sigma-1}\widetilde{z}_{*}(x_{j}\widetilde{u}_{*})\Vert_{L^{q_{1}'}_{t}L^{p_{1}'}_{x}}+\Vert(1-t)^{\sigma n-2}\,\widetilde{z}_{*}^{2\sigma}(x_{j}\widetilde{u}_{*})\Vert_{L^{q_{1}'}_{t}L^{p_{1}'}_{x}}\\
    &\quad+\Vert(1-t)^{\sigma n-2}\,\widetilde{z}_{*}^{2\sigma}(x_{j}\widetilde{z}_{*})\Vert_{L^{q_{1}'}_{t}L^{p_{1}'}_{x}}+\Vert (1-t)^{\sigma n-1}\vert \widetilde{u}_{*}+\widetilde{z}_{*}\vert^{2\sigma}(\widetilde{u}_{*}+\widetilde{z}_{*})\Vert_{L^{q_{1}'}_{t}W^{1,p_{1}'}_{x}} \\
    &\lesssim(1+\widetilde{T})\Vert\widetilde{u}_{*}(\widetilde{T})\Vert_{\Sigma}+\Vert(1-t)^{-1}\Vert_{L^{\frac{(2-n\sigma)(2\sigma+2)}{2\sigma+2-n\sigma}}_{t}}^{2-n\sigma}\sup_{\widetilde{T}\leq t<1}\Vert\widetilde{u}_{*}\Vert_{L^{2\sigma+2}_{x}}^{2\sigma}(\Vert x_{j}\widetilde{u}_{*}\Vert_{L^{q_{1}}_{t}L^{p_{1}}_{x}}+\Vert \widetilde{u}_{*}\Vert_{L^{q_{1}}_{t}W^{1,p_{1}}_{x}})\\
    &\quad+\text{(Remaining terms).}
\end{align*}
Proceeding with the same argument (such as taking $(p,q)=(p_1,q_1)$ as above) used to establish the $L^{q}_{t}W^{1,p}_{x}$ bounds, the estimates for the above terms just follow. The remaining terms can be controlled similarly, so we establish the weighted Strichartz estimates as well.

\noindent
\textbf{Case 2:} Let $\frac{4}{n}\leq 2\sigma<\frac{4}{n-2}$ and $n\geq 3$. \\ We take advantage of $\widetilde{E}_{1}(u(t))$ in this case. Recall that by setting $T=\frac{\widetilde{T}}{1-\widetilde{T}}$, we have the following:
\begin{align*}
    \widetilde{E}_{1}(\widetilde{u}(\widetilde{T}))-\widetilde{E}_{1}(\widetilde{u}(0))&=E(u(T))-E(u(0))\\
    &=\frac{4(2-n\sigma)}{\sigma+1}\int_{0}^{T}(1+s)\Vert u(s)\Vert_{L^{2\sigma+2}_{x}}^{2\sigma+2}ds+\int_{0}^{T}T_{1}(s)ds+\int_{0}^{T}T_{2}(s)dB\\
    &=\frac{4(2-n\sigma)}{\sigma+1}\int_{0}^{\widetilde{T}}(1-t)^{\sigma n-3}\Vert\widetilde{u}(t)\Vert_{L^{2\sigma+2}_{x}}^{2\sigma+2}dt+\int_{0}^{T}T_{1}(s)ds+\int_{0}^{T}T_{2}(s)dB.
\end{align*}
Since $2-n\sigma\leq 0$ for $\frac{4}{n}\leq 2\sigma<\frac{4}{n-2}$, we get that
\begin{align*}
    \widetilde{E}_{1}(\widetilde{u}(\widetilde{T}))\lesssim\Vert u(0)\Vert_{\Sigma}+\sup_{0\leq T<\infty}\left\vert\int_{0}^{T}T_{1}(s)ds+\int_{0}^{T}T_{2}(s)dB\right\vert<\infty,\quad a.s.
\end{align*}
Note that in the mass-critical or mass super-critical case, the pseudo-conformal energy estimate \eqref{eq:growth_of_psedo_energy_mass_super-critical} shows that the quantity $\widetilde{E}_{1}(\widetilde{u}(\widetilde{T}))$ remains bounded uniformly in time. This almost surely gives the following bound for $\Vert\nabla\widetilde{u}(t)\Vert_{L^{2}_{x}}$:
\begin{equation*}
    \Vert\nabla\widetilde{u}(t)\Vert_{L^{2}_{x}}^{2}+(1-t)^{n\sigma-2}\Vert \widetilde{u}(t)\Vert_{L^{2\sigma+2}_{x}}^{2\sigma+2}\lesssim 1.
\end{equation*}
This together with the almost sure boundedness of mass shows that
\begin{equation*}
    \sup_{0\leq t<1}\Vert\widetilde{u}(t)\Vert_{H^{1}_{x}}<\infty\quad a.s.
\end{equation*}
Then, since $\widetilde{u}(t)=\widetilde{u}_{*}(t)+\widetilde{z}_{*}(t)$, for $0<\widetilde{T}<1$, almost surely we obtain
\begin{align*}
    \sup_{\widetilde{T}\leq t<1}\Vert\widetilde{u}_{*}(t)\Vert_{H^{1}_{x}}&\leq\sup_{\widetilde{T}\leq t<1}\Vert\widetilde{u}(t)\Vert_{H^{1}_{x}}+\sup_{\widetilde{T}\leq t<1}\Vert\widetilde{z}_{*}(t)\Vert_{H^{1}_{x}}<\infty
\end{align*}
by assuming that $\langle s\rangle^2 g(s)\in o(s^{-1/2})$ as $s\to\infty$, a.s., where $s=\frac{t}{1-t}$.
Indeed, observing
\begin{equation*}
    \Vert\nabla\widetilde{z}_{*}(t)\Vert_{L^{2}_{x}}\sim\Vert \vert\cdot\vert z_{*}(s)\Vert_{L^{2}_{x}}+(1+s)\Vert\nabla z_{*}(s)\Vert_{L^{2}_{x}},
\end{equation*}
and using \eqref{eq:almost_sure_time_decay_of_tail}, we obtain
\begin{equation*}
    \Vert\widetilde{z}_{*}(t)\Vert_{H^{1}_{x}} \lesssim(1+s)\Vert z_{*}(s)\Vert_{\Sigma} \lesssim(1+s)\langle s\rangle^{-\alpha+\frac{3}{2}},
\end{equation*}
and the supremum over $T\leq s<\infty$, for $T=\frac{\widetilde{T}}{1-\widetilde{T}}$, is finite provided that $\alpha\geq\frac{5}{2}$, which is already the case. This implies almost sure boundedness. Subsequently, applying the same arguments as in the proof of Corollary 2.4 of \cite{Herr_2019} suffices to establish the global-in-time Strichartz estimates. Note that staying in the energy sub-critical regime permits us to justify \eqref{eq:derivation_Strauss_exp}, which is a crucial element in proving the Strichartz estimates.

\noindent
\textbf{Case 3:} Let $\frac{4}{n} < 2\sigma<\infty$ and $n=1,2$. \\ We again have $\sup\limits_{\widetilde{T}\leq t<1}\Vert \widetilde{u}_{*}(t)\Vert_{H^{1}_{x}}<\infty,\, a.s.$ for $0<\widetilde{T}<1$ as in Case 2. Note that for any pair $(p_{1},q_{1})$ satisfying \eqref{eq:Strichartz_admissible_pair}, we introduce a parameter $\theta\in[0,1]$ and find another admissible pair $(p_{2},q_{2})$ depending on $\theta$, so that the following are satisfied:
\begin{equation*}
    \frac{2\sigma\theta}{q_{1}}+\frac{2\sigma(1-\theta)}{q_{2}}+\frac{1}{q_{1}}=\frac{1}{q_{1}'},
\end{equation*}
and
\begin{equation*}
    \frac{1}{p_{2}}=\frac{1}{2}\left(1+\frac{1+\sigma\theta}{\sigma(1-\theta)}\right)-\frac{1}{p_{1}}\frac{1+\sigma\theta}{\sigma(1-\theta)}-\frac{1}{n\sigma(1-\theta)}.
\end{equation*}
Note that the above relations require $\frac{4}{n}\leq 2\sigma$ and we may impose further restrictions on $\theta$, see Section \ref{section:scattering_H^1} for details. Let $I\subseteq[0,1)$ and define the spaces associated with the admissible pairs $(p_{1},q_{1})$ and $(p_{2},q_{2})$
\begin{equation*}
    \begin{aligned}
     Z^{0}_{p_{1},q_{1},n,\theta}(I)&=Z^{0}(I)=L^{q_{1}}_{t}L^{p_{1}}_{x}(I)\cap L^{q_{2}}_{t}L^{p_{2}}_{x}(I)  \\ Z^{1}(I)&=\{f \mid \langle\nabla\rangle f\in Z^{0}(I)\}.
    \end{aligned}
\end{equation*}
Utilizing the Duhamel representation of $\widetilde{u}_{*}(t)$, Hölder's inequality and Sobolev embedding, we get that
\begin{align*}
    \Vert \widetilde{u}_{*}(t)\Vert_{Z^{1}([\widetilde{T},1))}&\lesssim\Vert\widetilde{u}_{*}(\widetilde{T})\Vert_{H^{1}_{x}}+\Vert(1-t)^{n\sigma-2}\vert\widetilde{u}_{*}+\widetilde{z}_{*}\vert^{2\sigma}(\widetilde{u}_{*}+\widetilde{z}_{*})\Vert_{L^{q_{1}'}_{t}W^{1,p_{1}'}_{x}([\widetilde{T},1))}\\
    &\lesssim\Vert\widetilde{u}_{*}(\widetilde{T})\Vert_{H^{1}_{x}}+\Vert(1-t)\Vert_{L^{\infty}_{t}([\widetilde{T},1))}^{n\sigma-2}\Big[\Vert\widetilde{u}_{*}\Vert_{L^{q_{1}}_{t}L^{p_{1}}_{x}([\widetilde{T},1))}^{2\sigma\theta}\Vert\widetilde{u}_{*}\Vert_{L^{q_{2}}_{t}W^{1,p_{2}}_{x}([\widetilde{T},1))}^{2\sigma(1-\theta)}\Vert\widetilde{u}_{*}\Vert_{L^{q_{1}}_{t}W^{1,p_{1}}_{x}([\widetilde{T},1))} .\\
    &\quad+ \text{(Remaining terms)}\Big]\\
    &\lesssim\Vert\widetilde{u}_{*}(\widetilde{T})\Vert_{H^{1}_{x}}+\Vert(1-t)\Vert_{L^{\infty}_{t}([\widetilde{T},1))}^{n\sigma-2}\Big[\Vert\widetilde{u}_{*}\Vert_{Z^{1}([\widetilde{T},1))}^{2\sigma+1}+\text{(Remaining terms)}\Big].
\end{align*}
Notice that $\Vert(1-t)\Vert_{L^{\infty}_{t}([\widetilde{T},t))}^{n\sigma-2}=(1-\widetilde{T})^{n\sigma-2}\to 0$ as $\widetilde{T}\to 1^-$. Therefore, the dual Strichartz norms including the remaining terms can be made arbitrarily small by choosing $\widetilde{T}$ sufficiently close to $1$, leading to $\Vert\widetilde{u}_{*}\Vert_{Z^{1}([\widetilde{T},1))}\lesssim\Vert\widetilde{u}_{*}(\widetilde{T})\Vert_{H^{1}_{x}}$. For the weighted Strichartz norm, note that
\begin{align*}
    \Vert x\widetilde{u}_{*}\Vert_{Z^{0}([\widetilde{T},1))}&\lesssim\Vert \widetilde{u}_{*}(\widetilde{T})\Vert_{\Sigma}+\Vert(1-t)^{n\sigma-2}x\vert\widetilde{u}_{*}+\widetilde{z}_{*}\vert^{2\sigma}(\widetilde{u}_{*}+\widetilde{z}_{*})\Vert_{L^{q_{1}'}_{t}L^{p_{1}'}_{x}([\widetilde{T},1))}\\
    &\quad+\Vert(1-t)^{n\sigma-1}\vert\widetilde{u}_{*}+\widetilde{z}_{*}\vert^{2\sigma}(\widetilde{u}_{*}+\widetilde{z}_{*})\Vert_{L^{q_{1}'}_{t}W^{1,p_{1}'}_{x}([\widetilde{T},1))}\\
    &\lesssim\Vert \widetilde{u}_{*}(\widetilde{T})\Vert_{\Sigma}+\Vert (1-t)\Vert_{L^{\infty}_{t}([\widetilde{T},1))}^{n\sigma-2}\Vert \widetilde{u}_{*}\Vert_{Z^{1}([\widetilde{T},1))}^{2\sigma}\Vert x\widetilde{u}_{*}\Vert_{Z^{0}([\widetilde{T},1))}\\
    &\quad+\Vert (1-t)\Vert_{L^{\infty}_{t}([\widetilde{T},1))}^{n\sigma-1}\Vert \widetilde{u}_{*}\Vert_{Z^{1}([\widetilde{T},1))}^{2\sigma+1}+\text{(Remaining terms)}.
\end{align*}
A similar process as in estimating $\Vert\widetilde{u}_{*}(t)\Vert_{Z^{1}([\widetilde{T},1))}$, but this time using $Z^{0}([\widetilde{T},1))$-norm instead of $Z^{1}([\widetilde{T},1))$-norm, shows that $\Vert x\widetilde{u}_{*}\Vert_{Z^{0}([\widetilde{T},1))}\lesssim\Vert\widetilde{u}_{*}(\widetilde{T})\Vert_{\Sigma}$. This completes the proof in the mass super-critical case.

\noindent
\textbf{Case 4:} Let $2\sigma=\frac{4}{n}$ and $n=1,2$. \\ With the Duhamel formulation for $\widetilde{u}_{*}(s)$ on the time interval $[0,t)$, we write
\begin{align*}
    \Vert\widetilde{u}_{*}\Vert_{C^{0}_{t}H^{1}_{x}([0,t))}+\Vert\widetilde{u}_{*}\Vert_{Z^{1}([0,t))}&\leq C\big[\Vert\widetilde{u}_{*}(0)\Vert_{H^{1}_{x}}+\Vert\widetilde{z}_{*}\Vert_{Z^{1}([0,t))}^{2\sigma+1}+\Vert \widetilde{z}_{*}\Vert_{Z^{1}([0,t))}^{2\sigma}\Vert \widetilde{u}_{*}\Vert_{Z^{1}([0,t))}\\
    &\quad\quad+\Vert \widetilde{z}_{*}\Vert_{Z^{1}([0,t))}\Vert \widetilde{u}_{*}\Vert_{Z^{1}([0,t))}^{2\sigma}+\Vert \widetilde{u}_{*}\Vert_{Z^{1}([0,t))}^{2\sigma+1}\big].
\end{align*}
Recall from \eqref{eq:global_Strichartz_est_for_tail} that $\widetilde{z}_{*}$ satisfies global-in-time Strichartz estimates. Therefore, given $\varepsilon>0$, we may partition $[0,t)=\bigcup_{j=1}^{M}I_{j}$, where $I_{j}=[t_{j},t_{j+1})$ and $M=M(\varepsilon,t)$, so that we have the following inequality for $1\leq j\leq M$:
\begin{equation*}
    \Vert\widetilde{z}_{*}\Vert_{Z^{1}(I_{j})}\leq\varepsilon.
\end{equation*}
Then with the above inequalities we have
\begin{align*}
     \Vert\widetilde{u}_{*}\Vert_{C^{0}_{t}H^{1}_{x}(I_{1})}+\Vert\widetilde{u}_{*}\Vert_{Z^{1}(I_{1})}\leq C(\Vert\widetilde{u}_{*}(0)\Vert_{H^{1}_{x}}+\varepsilon^{2\sigma}\Vert \widetilde{u}_{*}\Vert_{Z^{1}(I_{1})}+\varepsilon\Vert \widetilde{u}_{*}\Vert_{Z^{1}(I_{1})}^{2\sigma}+\Vert\widetilde{u}_{*}\Vert_{Z^{1}(I_{1})}^{2\sigma+1}+\varepsilon^{2\sigma+1}).
\end{align*}
Choosing $\varepsilon>0$ small enough, we get for some constant $C_{1}\geq 1$ that
\begin{align*}
    \Vert\widetilde{u}_{*}\Vert_{C^{0}_{t}H^{1}_{x}(I_{1})}+\Vert\widetilde{u}_{*}\Vert_{Z^{1}(I_{1})}\leq C_{1}\Vert\widetilde{u}_{*}(0)\Vert_{H^{1}_{x}}+C_{1}\Vert\widetilde{u}_{*}\Vert_{Z^{1}(I_{1})}^{2\sigma+1}.
\end{align*}
Now, we shall use Lemma \ref{lemma:uniform_bounded_by_its_higher_powers} by choosing $a=C_{1}\Vert\widetilde{u}_{*}(0)\Vert_{H^{1}_{x}}$ and $b=C_{1}$ together with the condition
\begin{equation*}
    C_{1}\Vert\widetilde{u}_{*}(0)\Vert_{H^{1}_{x}}<\left(\frac{2\sigma}{2\sigma+1}\right)\left((2\sigma+1)C_{1}\right)^{-\frac{1}{2\sigma}}\Rightarrow\Vert\widetilde{u}_{*}(0)\Vert_{H^{1}_{x}}<\left(\frac{2\sigma}{2\sigma+1}\right)(2\sigma+1)^{-\frac{1}{2\sigma}}(C_{1})^{-1-\frac{1}{2\sigma}}.
\end{equation*}
Then we have
\begin{equation*}
    \Vert\widetilde{u}_{*}\Vert_{C^{0}_{t}H^{1}_{x}(I_{1})}+\Vert\widetilde{u}_{*}\Vert_{Z^{1}(I_{1})}\leq C_{1}\left(\frac{2\sigma+1}{2\sigma}\right)\Vert\widetilde{u}_{*}(0)\Vert_{H^{1}_{x}}.
\end{equation*}
For the second subinterval $I_{2}$, note that we have $\Vert\widetilde{u}_{*}(t_{2})\Vert_{H^{1}_{x}}\leq C_{1}\left(\frac{2\sigma+1}{2\sigma}\right)\Vert\widetilde{u}_{*}(0)\Vert_{H^{1}_{x}}$. Thus, pursuing the same procedure for the interval $I_{2}$ gives rise to
\begin{align*}
    \Vert\widetilde{u}_{*}\Vert_{C^{0}_{t}H^{1}_{x}(I_{2})}+\Vert\widetilde{u}_{*}\Vert_{Z^{1}(I_{2})}&\leq C_{1}\Vert\widetilde{u}_{*}(t_{2})\Vert_{H^{1}_{x}}+C_{1}\Vert\widetilde{u}_{*}\Vert_{Z^{1}(I_{2})}^{2\sigma+1}\\
    &\leq C_{2}\Vert\widetilde{u}_{*}(0)\Vert_{H^{1}_{x}}+C_{1}\Vert\widetilde{u}_{*}\Vert_{Z^{1}(I_{2})}^{2\sigma+1}
\end{align*}
where $C_{2}=C_{1}^{2}\left(\frac{2\sigma+1}{2\sigma}\right)$. Here to apply Lemma \ref{lemma:uniform_bounded_by_its_higher_powers}, we need the following condition to be satisfied:
\begin{equation*}
    C_{2}\Vert\widetilde{u}_{*}(0)\Vert_{H^{1}_{x}}<\left(\frac{2\sigma}{2\sigma+1}\right)((2\sigma+1)C_{1})^{-\frac{1}{2\sigma}}  
\end{equation*}
which is also equivalent to demanding that
\begin{equation}\label{eq:step_2_condition}
    \Vert\widetilde{u}_{*}(0)\Vert_{H^{1}_{x}}<\left(\frac{2\sigma}{2\sigma+1}\right)^{2}(2\sigma+1)^{-\frac{1}{2\sigma}}C_{1}^{-2-\frac{1}{2\sigma}}.
\end{equation}
Then assuming \eqref{eq:step_2_condition}, we get that
\begin{equation*}
    \Vert\widetilde{u}_{*}\Vert_{C^{0}_{t}H^{1}_{x}(I_{2})}+\Vert\widetilde{u}_{*}\Vert_{Z^{1}(I_{2})}\leq C_{2}\left(\frac{2\sigma+1}{2\sigma}\right)\Vert\widetilde{u}_{*}(0)\Vert_{H^{1}_{x}}=C_{1}^{2}\left(\frac{2\sigma+1}{2\sigma}\right)^{2}\Vert\widetilde{u}_{*}(0)\Vert_{H^{1}_{x}}.
\end{equation*}
Now, we will implement an induction argument for $1\leq j\leq M$ by assuming that
\begin{equation}\label{eq:bound_of_data_for_induction}
    \Vert\widetilde{u}_{*}(0)\Vert_{H^{1}_{x}}<\left(\frac{2\sigma}{2\sigma+1}\right)^{M}(2\sigma+1)^{-\frac{1}{2\sigma}}C_{1}^{-M-\frac{1}{2\sigma}},
\end{equation}
which gives rise to satisfy the condition in Lemma \ref{lemma:uniform_bounded_by_its_higher_powers} at every step of the induction process. Our claim is that for all $1\leq j\leq M$, the following inequality holds true:
\begin{equation*}
    \Vert\widetilde{u}_{*}\Vert_{C^{0}_{t}H^{1}_{x}(I_{j})}+\Vert\widetilde{u}_{*}\Vert_{Z^{1}(I_{j})}\leq C_{j}\left(\frac{2\sigma+1}{2\sigma}\right)\Vert\widetilde{u}_{*}(0)\Vert_{H^{1}_{x}}=C_{1}^{j}\left(\frac{2\sigma+1}{2\sigma}\right)^{j}\Vert\widetilde{u}_{*}(0)\Vert_{H^{1}_{x}}.
\end{equation*}
Suppose that the claim is true for some $1<k<M$. Then we can infer that $$\Vert\widetilde{u}_{*}(t_{k+1})\Vert_{H^{1}_{x}}\leq C_{1}^{k}\left(\frac{2\sigma+1}{2\sigma}\right)^{k}\Vert\widetilde{u}_{*}(0)\Vert_{H^{1}_{x}},$$ and at the $(k+1)$-st step, we arrive at
\begin{align*}
    \Vert\widetilde{u}_{*}\Vert_{C^{0}_{t}H^{1}_{x}(I_{k+1})}+\Vert\widetilde{u}_{*}\Vert_{Z^{1}(I_{k+1})}&\leq C_{1}\Vert\widetilde{u}_{*}(t_{k+1})\Vert_{H^{1}_{x}}+C_{1}\Vert\widetilde{u}_{*}\Vert_{Z^{1}(I_{k+1})}^{2\sigma+1}\\
    &\leq C_{1}^{k+1}\left(\frac{2\sigma+1}{2\sigma}\right)^{k}\Vert\widetilde{u}_{*}(0)\Vert_{H^{1}_{x}}+C_{1}\Vert\widetilde{u}_{*}\Vert_{Z^{1}(I_{k+1)}}^{2\sigma+1}.
\end{align*}
Now, we wish to apply Lemma \ref{lemma:uniform_bounded_by_its_higher_powers} once the following condition holds:
\begin{equation*}
    C_{1}^{k+1}\left(\frac{2\sigma+1}{2\sigma}\right)^{k}\Vert\widetilde{u}_{*}(0)\Vert_{H^{1}_{x}}<\left(\frac{2\sigma}{2\sigma+1}\right)((2\sigma+1)C_{1})^{-\frac{1}{2\sigma}}
\end{equation*}
which is equivalent to
\begin{equation*}
    \Vert\widetilde{u}_{*}(0)\Vert_{H^{1}_{x}}<\left(\frac{2\sigma}{2\sigma+1}\right)^{k+1}(2\sigma+1)^{-\frac{1}{2\sigma}}C_{1}^{-(k+1)-\frac{1}{2\sigma}}.
\end{equation*}
This inequality is already satisfied due to the assumption \eqref{eq:bound_of_data_for_induction}. Therefore, utilizing Lemma \ref{lemma:uniform_bounded_by_its_higher_powers}, we get that
\begin{equation*}
    \Vert\widetilde{u}_{*}\Vert_{C^{0}_{t}H^{1}_{x}(I_{k+1})}+\Vert\widetilde{u}_{*}\Vert_{Z^{1}(I_{k+1})}\leq \left(\frac{2\sigma+1}{2\sigma}\right)C_{1}^{k+1}\left(\frac{2\sigma+1}{2\sigma}\right)^{k}\Vert\widetilde{u}_{*}(0)\Vert_{H^{1}_{x}}=C_{k+1}\Vert\widetilde{u}_{*}(0)\Vert_{H^{1}_{x}},
\end{equation*}
which proves the claim. Therefore, 
\begin{equation*}
    \Vert\widetilde{u}_{*}\Vert_{Z^{1}([0,t))}\leq \Vert\widetilde{u}_{*}(0)\Vert_{H^{1}_{x}}\sum_{j=1}^{M}C_{1}^{j}\left(\frac{2\sigma+1}{2\sigma}\right)^{j}.
\end{equation*}
Letting $t\to 1^-$ and noting that $M_{1}:=\lim\limits_{t\to 1^-}M(\varepsilon,t)<\infty$, we obtain
\begin{equation*}
    \Vert\widetilde{u}_{*}\Vert_{Z^{1}([0,1))}\leq\Vert\widetilde{u}_{*}(0)\Vert_{H^{1}_{x}}\sum_{j=1}^{M_{1}}C_{1}^{j}\left(\frac{2\sigma+1}{2\sigma}\right)^{j}\leq \Vert\widetilde{u}_{*}(0)\Vert_{H^{1}_{x}}\frac{\left(C_{1}\frac{2\sigma+1}{2\sigma}\right)^{M_{1}}-1}{\ln M_{1}}<\infty.
\end{equation*}
It is left to show the weighted Strichartz estimate. Using the Duhamel representation for $\widetilde{u}_{*}$, it follows that
\begin{align*}
    \Vert x\widetilde{u}_{*}\Vert_{Z^{0}([\widetilde{T},1))}&\lesssim\Vert\widetilde{u}_{*}(\widetilde{T})\Vert_{\Sigma}+\Vert\widetilde{u}_{*}\Vert_{Z^{1}([\widetilde{T},1))}^{2\sigma}\Vert x\widetilde{u}_{*}\Vert_{Z^{0}([\widetilde{T},1))}+\Vert\widetilde{u}_{*}\Vert_{Z^{1}([\widetilde{T},1))}^{2\sigma}\Vert x\widetilde{z}_{*}\Vert_{Z^{0}([\widetilde{T},1))}\\
    &+\Vert\widetilde{z}_{*}\Vert_{Z^{1}([\widetilde{T},1))}^{2\sigma}\Vert x\widetilde{u}_{*}\Vert_{Z^{0}([\widetilde{T},1))}+\Vert\widetilde{z}_{*}\Vert_{Z^{1}([\widetilde{T},1))}^{2\sigma}\Vert x\widetilde{u}_{*}\Vert_{Z^{0}([\widetilde{T},1))}\\
    &+\Vert(1-t)\Vert_{L^{\infty}_{t}([\widetilde{T},1))}^{n\sigma-1}C(\Vert\widetilde{u}_{*}\Vert_{Z^{1}([\widetilde{T},1))},\Vert\widetilde{z}_{*}\Vert_{Z^{1}([\widetilde{T},1))}).
\end{align*}
Note that $n\sigma-1>0$ by assumption and we have
\begin{equation*}
    \Vert(1-t)\Vert_{L^{\infty}_{t}([\widetilde{T},1))}^{n\sigma-1}=(1-\widetilde{T})^{n\sigma-1}\to 0\text{ as $\widetilde{T}\to 1^-$}.
\end{equation*}
Furthermore, $\widetilde{z}_{*}$ satisfies the global Strichartz estimates by \eqref{eq:global_Strichartz_est_for_tail} and $\widetilde{u}_{*}$ belongs to $L^{q}_{t}W^{1,p}_{x}([0,1)\times\mathbb{R}^{n})$ for any admissible pair $(p,q)$. Therefore, there exists $\varepsilon(\widetilde{T})>0$ such that $\Vert\widetilde{u}_{*}\Vert_{Z^{1}([\widetilde{T},1))},\Vert\widetilde{z}_{*}\Vert_{Z^{1}([\widetilde{T},1))}\leq\varepsilon (\widetilde{T})$ and $\varepsilon(\widetilde{T})\to 0$ as $\widetilde{T}\to 1^-$. These imply that for $\widetilde{T}$ taken sufficiently close to $1$, we have
\begin{equation*}
    \Vert x\widetilde{u}_{*}\Vert_{Z^{0}([\widetilde{T},1))}\lesssim\Vert\widetilde{u}_{*}(\widetilde{T})\Vert_{\Sigma}.
\end{equation*}
This completes the proof of the theorem.

\end{proof}
\begin{Remark}
In the next section, we will not require global-in-time Strichartz estimates \eqref{eq:global_Strichartz_for_random_eqn} when proving almost sure $L^{2}_{x}$ scattering. This allows us to demonstrate scattering in the entire short-range case.
\end{Remark}

\section{Scattering in $L^2_x$: Proof of Theorem \ref{thm:result_L^2_scattering}}\label{section:scattering_L^2}
Armed with the observations made in the previous section, we are now ready to show the $L^{2}_{x}$ scattering for $\frac{2}{n}<2\sigma<\frac{4}{n}$. By using the identity $\widetilde{E}_{2}(\widetilde{u}(t))=(1-t)^{(2-n\sigma)}E(u(s))$ and \eqref{eq:growth_of_psedo_energy_mass_sub-critical}, we have that
\begin{equation*}    
   \mathbb{E}\left[\Vert\nabla\widetilde{u}(t)\Vert_{L^{2}_{x}}\right]\lesssim(1-t)^{\frac{n\sigma}{2}-1}\quad\text{and}\quad \mathbb{E}\left[\Vert\widetilde{u}(t)\Vert_{L^{2\sigma+2}_{x}}\right]\lesssim 1,\quad\text{for $0\leq t<1$,}
\end{equation*}
with the implicit constants depending on the $\Sigma$ norm of the data. In the same range of $t$, the same bounds above also almost surely hold for $\widetilde{u}_{*}(t)=\widetilde{u}(t)-\widetilde{z}_{*}(t)$ instead (which have been observed in the proof of Theorem \ref{thm:global_Strichartz}). Recall that the mass of the solution $u$ to \eqref{eq:additive_SNLS} is almost surely bounded. We shall first show that $\widetilde{u}_{*}(t)$ converges weakly in $L^{2}_{x}$ as $t\to 1^-$, and it suffices to show this for Schwartz functions. Thus, let $\psi$ be a Schwartz function and $0<t_{1}<t_{2}<1$. Then integrating by parts and using the Cauchy-Schwarz inequality, we almost surely have
\begin{align*}
\vert\langle\widetilde{u}_{*}(t_{2})-\widetilde{u}_{*}(t_{1}),\psi\rangle\vert&=\left\vert\left\langle\int_{t_{1}}^{t_{2}}\partial_{t}\widetilde{u}_{*}(t)dt,\psi\right\rangle\right\vert\\    &\lesssim\int_{t_{1}}^{t_{2}}\vert\langle\Delta \widetilde{u}_{*},\psi\rangle\vert+(1-t)^{n\sigma-2}\vert\langle\vert\widetilde{u}_{*}+\widetilde{z}_{*}\vert^{2\sigma}(\widetilde{u}_{*}+\widetilde{z}_{*}),\psi\rangle\vert dt\\    &\lesssim\int_{t_{1}}^{t_{2}}\Vert\nabla\widetilde{u}_{*}\Vert_{L^{2}_{x}}\Vert\nabla\psi\Vert_{L^{2}_{x}}+(1-t)^{n\sigma-2}C(\Vert\widetilde{u}_{*}\Vert_{L^{2\sigma+2}_{x}},\Vert\widetilde{z}_{*}\Vert_{L^{2\sigma+2}_{x}},\Vert\psi\Vert_{L^{2\sigma+2}_{x}})\, dt\\
    &\lesssim\int_{t_{1}}^{t_{2}}(1-t)^{\frac{n\sigma}{2}-1}+(1-t)^{n\sigma-2}dt\to 0\quad\text{as $t_{1},t_{2}\to1^-$}
\end{align*}
since $\frac{2}{n}<2\sigma$. This shows the weak convergence, so let us define $\widetilde{u}_{+}$ as the weak limit of $\widetilde{u}_{*}(t)$ as $t\to 1^-$. Next we shall demonstrate the strong convergence by showing that $\Vert\widetilde{u}_{*}(t)-\widetilde{u}_{+}\Vert_{L^{2}_{x}}\to 0$ as $t\to 1^-$. Note that for fixed $t$, we have
\begin{equation*}
    \lim_{s\to 1^-}\langle\widetilde{u}_{*}(t)-\widetilde{u}_{*}(s),\widetilde{u}_{+}\rangle=\langle\widetilde{u}_{*}(t)-\widetilde{u}_{+},\widetilde{u}_{+}\rangle.
\end{equation*}
Then
\begin{equation*}
    \langle\widetilde{u}_{*}(t)-\widetilde{u}_{+},\widetilde{u}_{+}\rangle\to 0\quad\text{as $t\to 1^-$}.
\end{equation*}
Thus, it suffices to show the following:
\begin{equation*}
    \lim_{t\to 1^-}\lim_{s\to 1^-}\langle\widetilde{u}_{*}(t)-\widetilde{u}_{*}(s),\widetilde{u}_{*}(t)\rangle=0.
\end{equation*}
Proceeding similarly as in the weak convergence computation above (using the range of $\sigma$), we almost surely obtain
\begin{align*}
    \vert\langle\widetilde{u}_{*}(t)-\widetilde{u}_{*}(s),\widetilde{u}_{*}(t)\rangle\vert&\lesssim\int_{s}^{t}\Vert\nabla\widetilde{u}_{*}(\tau)\Vert_{L^{2}_{x}}\Vert\nabla\widetilde{u}_{*}(t)\Vert_{L^{2}_{x}}+(1-\tau)^{n\sigma-2}d\tau\\
    &\lesssim\int_{s}^{t}(1-\tau)^{\frac{n\sigma}{2}-1}(1-t)^{\frac{n\sigma}{2}-1}+(1-\tau)^{n\sigma-2}d\tau\to 0\quad \text{as $t,s\to 1^-$}.
\end{align*}
Therefore, $\Vert \widetilde{u}_{*}(t)-u_{+}\Vert_{L^{2}_{x}}\to 0$, a.s., as $t\to 1^-$. To show the $L^{2}_{x}$ scattering, it remains to prove the equivalence of asymptotics between $\widetilde{u}_{*}$ and $u_{*}$ at $1$ and $\infty$, respectively. Define the dilation operator $D_{\beta}$ as
\begin{equation*}
    D_{\beta}f(x)=\beta^{\frac{n}{2}}f(\beta x),\quad\beta>0,
\end{equation*}
and the multiplication $M_{\theta}$ as 
\begin{equation*}
    M_{\theta}f(x)=e^{i\frac{\theta\vert x\vert^{2}}{4}}f(x),\quad\theta\in\mathbb{R}.
\end{equation*}
Denote $S(t)=e^{-it\Delta}$ as the linear propagator associated with the Schrödinger equation \eqref{eq:additive_SNLS}. Then, we may write 
\begin{equation*}
    \widetilde{u}_{*}(t)=M_{\frac{1}{1-t}}D_{\frac{1}{1-t}}u_{*}\left(\frac{t}{1-t}\right),\quad t\in[0,1),
\end{equation*}
which is yet another expression for pseudo-conformal transformation. We have the following identities:
\begin{enumerate}[(i)]
    \item $S(-t)D_{\beta}=D_{\beta}S(-\beta^{2}t)$,\label{commutation_dilation}
    \item $S(-t)M_{\theta}=M_{\frac{\theta}{1+\theta t}}D_{\frac{1}{1+\theta t}}S(-\frac{t}{1+\theta t})$.\label{commutation_multiplication}
\end{enumerate}
Using the identities \eqref{commutation_dilation}, \eqref{commutation_multiplication} together with
\begin{equation*}
    u_{*}(s)=M_{-\frac{1}{1+s}}D_{\frac{1}{1+s}}\widetilde{u}_{*}\left(\frac{s}{1+s}\right),
\end{equation*}
we obtain
\begin{align*}
    S(-s)u_{*}(s)&=S(-s)M_{-\frac{1}{1+s}}D_{\frac{1}{1+s}}\widetilde{u}_{*}\left(\frac{s}{1+s}\right)\\
    &=M_{-1}S\left(-\frac{s}{1+s}\right)\widetilde{u}_{*}\left(\frac{s}{1+s}\right)\\
    &=M_{-1}S(-t)\widetilde{u}_{*}(t).
\end{align*}
By Proposition 7.5.1 of \cite{Cazenave_book}, as $\widetilde{u}_{*}(t)$ has a strong limit in $L^{2}_{x}$ as $t\to 1^-$, $u_{*}(s)$ has a strong limit in $L^{2}_{x}$ as $s\to\infty$ and the following holds
\begin{equation*}
    \lim_{s\to\infty}S(-s)u_{*}(s)=\lim_{t\to 1^-}M_{-1}S(-t)\widetilde{u}_{*}(t)=M_{-1}S(-1)\widetilde{u}_{+}\quad \text{in }L^{2}_{x}.
\end{equation*}

\section{Scattering in $\Sigma$ space: Proof of Theorem \ref{thm:result_Sigma_scattering}}\label{section:scattering_Sigma}
It suffices to show that the global solution $u_{*}(s)$ to the random equation \eqref{eq:additive_random_eqn} has a strong limit in $\Sigma$ after taking the pseudo-conformal transformation (this will be sufficient to establish the scattering result via Proposition 7.5.1 from \cite{Cazenave_book}). 

Recall that
\begin{equation*}
    S(-s)u_{*}(s)=M_{-1}S(-t)\widetilde{u}_{*}(t).
\end{equation*}
Since $\Sigma$ is a Banach space equipped with the $\Sigma$ norm, it is enough to show that $\{S(-t)\widetilde{u}_{*}(t)\}$ is a Cauchy sequence for $t\in[0,1)$. We shall start with the weighted norm. Recall the Duhamel formulation of $\widetilde{u}_{*}(t)$:
\begin{equation*}
\widetilde{u}_{*}(t)=S(t)\widetilde{u}_{*}(0)+i\int_{0}^{t}S(t-t')(1-t')^{n\sigma-2}\vert\widetilde{u}_{*}+\widetilde{z}_{*}\vert^{2\sigma}(\widetilde{u}_{*}+\widetilde{z}_{*})(t')dt'
\end{equation*}
implying that
\begin{equation*}
    S(-t)\widetilde{u}_{*}(t)=\widetilde{u}_{*}(0)+i\int_{0}^{t}S(-t')(1-t')^{n\sigma-2}\vert\widetilde{u}_{*}+\widetilde{z}_{*}\vert^{2\sigma}(\widetilde{u}_{*}+\widetilde{z}_{*})(t')dt'.
\end{equation*}
Set $J(t):= x-2it\nabla$. We have the following identities:
\begin{enumerate}[(I)]
    \item $J(t)=S(-t)xS(t)\Rightarrow$ $S(t)J(t)=xS(t)$ or $J(t)S(-t)=S(-t)x$,\label{first_identity}
    \item $J(t)=M_{-\frac{1}{t}}(-2it\nabla) M_{\frac{1}{t}}$.\label{second_identity}
\end{enumerate}
Let $0<t_{1}<t_{2}<1$. Note that
\begin{align*}
    xS(-t_{2})\widetilde{u}_{*}(t_{2})-xS(-t_{1})\widetilde{u}_{*}(t_{1})=ix\int_{t_{1}}^{t_{2}}S(-t')(1-t')^{n\sigma-2}\vert\widetilde{u}_{*}+\widetilde{z}_{*}\vert^{2\sigma}(\widetilde{u}_{*}+\widetilde{z}_{*})(t')dt'.
\end{align*}
Then using the identity \eqref{first_identity} and the $L^{2}_{x}\to L^{2}_{x}$ isometry of $S(t)$, we get the following:
\begin{align*}
    &\Vert xS(-t_{2})\widetilde{u}_{*}(t_{2})-xS(-t_{1})\widetilde{u}_{*}(t_{1})\Vert_{L^{2}_{x}}\\&=\left\Vert x\int_{t_{1}}^{t_{2}}S(-t')(1-t')^{n\sigma-2}\vert\widetilde{u}_{*}+\widetilde{z}_{*}\vert^{2\sigma}(\widetilde{u}_{*}+\widetilde{z}_{*})(t')dt'\right\Vert_{L^{2}_{x}}\\
    &=\left\Vert S(1)x\int_{t_{1}}^{t_{2}}S(-t')(1-t')^{n\sigma-2}\left(\vert\widetilde{u}_{*}+\widetilde{z}_{*}\vert^{2\sigma}(\widetilde{u}_{*}+\widetilde{z}_{*})(t')\right)dt'\right\Vert_{L^{2}_{x}}\\
    &=\left\Vert (x+2i\nabla)\int_{t_{1}}^{t_{2}}S(1-t')(1-t')^{n\sigma-2}\left(\vert\widetilde{u}_{*}+\widetilde{z}_{*}\vert^{2\sigma}(\widetilde{u}_{*}+\widetilde{z}_{*})(t')\right)dt\right\Vert_{L^{2}_{x}}\\
    &\lesssim\left\Vert\int_{t_{1}}^{t_{2}}S(-t')(1-t')^{n\sigma-2}\nabla\left(\vert\widetilde{u}_{*}+\widetilde{z}_{*}\vert^{2\sigma}(\widetilde{u}_{*}+\widetilde{z}_{*})(t')\right)dt'\right\Vert_{L^{2}_{x}}\\
    &+\left\Vert\int_{t_{1}}^{t_{2}}S(-t')(1-t')^{n\sigma-2}J(1-t')\left(\vert\widetilde{u}_{*}+\widetilde{z}_{*}\vert^{2\sigma}(\widetilde{u}_{*}+\widetilde{z}_{*})(t')\right)dt'\right\Vert_{L^{2}_{x}}\\
    &\lesssim\Vert (1-t)^{n\sigma-2}J(1-t)\left(\vert\widetilde{u}_{*}+\widetilde{z}_{*}\vert^{2\sigma}(\widetilde{u}_{*}+\widetilde{z}_{*})\right)\Vert_{L^{q_{1}'}_{t}L^{p_{1}'}_{x}((t_{1},t_{2})\times\mathbb{R}^{n})}\\
    &+\Vert (1-t)^{n\sigma-2}\vert\widetilde{u}_{*}+\widetilde{z}_{*}\vert^{2\sigma}(\widetilde{u}_{*}+\widetilde{z}_{*})\Vert_{L^{q_{1}'}_{t}W^{1,p_{1}'}_{x}((t_{1},t_{2})\times\mathbb{R}^{n})}
\end{align*}
where $(p_{1},q_{1})$ is an admissible pair. Consider the pair $(2\sigma+2,\gamma)=(2\sigma+2,\frac{2\sigma(2\sigma+2)}{2-\sigma(n-2)})$. Then we may find another admissible pair $(p_{2},q_{2})$ satisfying
\begin{equation*}
    \frac{1}{p_{1}'}=\frac{\sigma}{\sigma+1}+\frac{1}{p_{2}}\quad\text{and}\quad \frac{1}{q_{1}'}=\frac{2\sigma}{\gamma}+\frac{1}{q_{2}}.
\end{equation*}
Indeed, to verify, 
\begin{align*}
    n\left(\frac{1}{2}-\frac{1}{p_{2}}\right)&=n\left(\frac{1}{2}+\frac{\sigma}{\sigma+1}-\frac{1}{p_{1}'}\right) =n\left(\frac{1}{2}+\frac{\sigma}{\sigma+1}-1+\frac{1}{p_{1}}\right) = -\frac{2}{q_{1}}+\frac{n\sigma}{\sigma+1} \\
    &=\frac{2}{q_{1}'}-2+\frac{n\sigma}{\sigma+1} =\frac{2}{q_{1}'}-\frac{2\sigma+2-n\sigma}{\sigma+1} =2\left(\frac{1}{q_{1}'}-\frac{2\sigma}{\theta}\right)=\frac{2}{q_{2}}.
\end{align*}
Note that we only require $2\sigma\leq\gamma$, which is the case for $\sigma(n)<2\sigma<\frac{4}{n-2}$ since we have
\begin{equation*}
    2\sigma-\frac{2\sigma(2\sigma+2)}{2-\sigma(n-2)}=-\frac{n\sigma}{2-\sigma(n-2)}\leq 0.
\end{equation*}
Now using the identity \eqref{second_identity}, Leibniz rule and triangle inequality, we have
\begin{align*}
    &\left\Vert (1-t)^{n\sigma-2}J(1-t)\left(\vert\widetilde{u}_{*}+\widetilde{z}_{*}\vert^{2\sigma}(\widetilde{u}_{*}+\widetilde{z}_{*})\right)\right\Vert_{L^{q_{1}'}_{t}L^{p_{1}'}_{x}((t_{1},t_{2})\times\mathbb{R}^{n})}\\
    &\lesssim \left\Vert (1-t)^{n\sigma-2}(-2i(1-t)\nabla)\left(\vert M_{\frac{1}{1-t}}\widetilde{u}_{*}\vert^{2\sigma}(M_{\frac{1}{1-t}}\widetilde{u}_{*})\right)\right\Vert_{L^{q_{1}'}_{t}L^{p_{1}'}_{x}((t_{1},t_{2})\times\mathbb{R}^{n})}+\text{(Remaining terms)}.
\end{align*}
Observe that
\begin{align*}
    (-2i(1-t)\nabla)M_{\frac{1}{1-t}}\widetilde{u}_{*}&=(-2i(1-t)\nabla)e^{\frac{i\vert x\vert^{2}}{4(1-t)}}\widetilde{u}_{*}(x,t)\\
    &=-2i(1-t) e^{\frac{i\vert x\vert^{2}}{4(1-t)}}\left(\frac{i}{2(1-t)}x+\nabla\right)\widetilde{u}_{*}(x,t)=M_{\frac{1}{1-t}}J(1-t)\widetilde{u}_{*}(x,t).
\end{align*}
Then by the Leibniz rule and above equality
\begin{align*}
    &\left\Vert (1-t)^{n\sigma-2}(-2i(1-t)\nabla)\left(\vert M_{\frac{1}{1-t}}\widetilde{u}_{*}\vert^{2\sigma}(M_{\frac{1}{1-t}}\widetilde{u}_{*})\right)\right\Vert_{L^{q_{1}'}_{t}L^{p_{1}'}_{x}((t_{1},t_{2})\times\mathbb{R}^{n})}\\
    &\lesssim\left\Vert (1-t)^{n\sigma-2}\,\widetilde{u}_{*}^{2\sigma}J(1-t)\widetilde{u}_{*}\right\Vert_{L^{q_{1}'}_{t}L^{p_{1}'}_{x}((t_{1},t_{2})\times\mathbb{R}^{n})}\\    
    &\lesssim\Vert(1-t)^{n\sigma-2}\widetilde{u}_{*}^{2\sigma}\Vert_{L^{\frac{\gamma}{2\sigma}}_{t}L^{\frac{2\sigma+2}{2\sigma}}_{x}((t_{1},t_{2})\times\mathbb{R}^{n})}\Vert J(1-t)\widetilde{u}_{*}\Vert_{L^{q_{2}}_{t}L^{p_{2}}_{x}((t_{1},t_{2})\times\mathbb{R}^{n})}\\
    &\lesssim \Vert(1-t)^{\frac{n\sigma-2}{2\sigma}}\widetilde{u}_{*}\Vert_{L^{\gamma}_{t}L^{2\sigma+2}_{x}((t_{1},t_{2})\times\mathbb{R}^{n})}^{2\sigma}\Vert J(1-t)\widetilde{u}_{*}\Vert_{L^{q_{2}}_{t}L^{p_{2}}_{x}((t_{1},t_{2})\times\mathbb{R}^{n})}\\    
    &\lesssim\Vert(1-t)^{\frac{n\sigma-2}{2\sigma}}\Vert_{L^{\gamma}_{t}((t_{1},t_{2}))}\sup_{t_{1}<t<t_{2}}\Vert\widetilde{u}_{*}(t)\Vert_{L^{2\sigma+2}_{x}}^{2\sigma}\left(\Vert  x\widetilde{u}_{*}\Vert_{L^{q_{2}}_{t}L^{p_{2}}_{x}((t_{1},t_{2})\times\mathbb{R}^{n})}+\Vert (1-t)\nabla\widetilde{u}_{*}\Vert_{L^{q_{2}}_{t}L^{p_{2}}_{x}((t_{1},t_{2})\times\mathbb{R}^{n})}\right).
\end{align*}
Notice that the first factor above is finite for all $0<t_{1}<t_{2}<1$ if $\sigma(n)<2\sigma<\frac{4}{n}$. The second factor is also finite for the same range of $2\sigma$. The remaining two terms in the third factor are controlled by the Strichartz norms of $\widetilde{u}_{*}(t)$, which have shown to be finite on the time interval $[0,1)$. The nonlinear terms involving $\widetilde{z}_{*}(t)$ can be shown to tend to $0$ as $t_{1},t_{2}\to 1^-$ by implementing the same arguments and using the decay properties of $g(s)$.

The second term of the upper bound of the weighted $L^{2}_{x}$-norm of the differences of $S(-t)u(t)$ can be treated similarly, but in this case, combining Leibniz rule, Hölder inequalities with the same spacetime exponents as in the previous estimation and the global-in-time Strichartz estimates, we deduce that this term also tends to zero as $t_{1},t_{2}\to 1^{-}$. Hence, this demonstrates that
\begin{equation*}
     \Vert xS(-t_{2})\widetilde{u}_{*}(t_{2})-xS(-t_{1})\widetilde{u}_{*}(t_{1})\Vert_{L^{2}_{x}}\to 0\quad\text{as $t_{1},t_{2}\to 1^-$}.
\end{equation*}
It remains to show that
\begin{equation*}
    \Vert S(-t_{2})\widetilde{u}_{*}(t_{2})-S(-t_{1})\widetilde{u}_{*}(t_{1})\Vert_{H^{1}_{x}}\to 0\quad\text{as $t_{1},t_{2}\to 1^-$}.
\end{equation*}
Again utilizing the Duhamel representation for $\widetilde{u}_{*}$, using Leibniz rule and Hölder's inequality, we obtain
\begin{align*}
    \Vert S(-t_{2})\widetilde{u}_{*}&(t_{2})-S(-t_{1})\widetilde{u}_{*}(t_{1})\Vert_{H^{1}_{x}}=\left\Vert\int_{t_{1}}^{t_{2}}S(-t')(1-t)^{n\sigma-2}\langle\nabla\rangle\left(\vert\widetilde{u}_{*}+\widetilde{z}_{*}\vert^{2\sigma}(\widetilde{u}_{*}+\widetilde{z}_{*})(t')\right)dt'\right\Vert_{L^{2}_{x}}\\    
    &\lesssim\Vert(1-t)^{n\sigma-2}\langle\nabla\rangle\left(\vert\widetilde{u}_{*}+\widetilde{z}_{*}\vert^{2\sigma}(\widetilde{u}_{*}+\widetilde{z}_{*})\right)\Vert_{L^{q_{1}'}_{t}L^{p_{1}'}_{x}((t_{1},t_{2})\times\mathbb{R}^{n})}\\
    &\lesssim\Vert(1-t)^{n\sigma-2}\widetilde{u}_{*}^{2\sigma}\Vert_{L^{\gamma}_{t}L^{\frac{2\sigma+2}{2\sigma}}_{x}((t_{1},t_{2})\times\mathbb{R}^{n})}\Vert\langle\nabla\rangle\widetilde{u}_{*}\Vert_{L^{q_{2}}_{t}L^{p_{2}}_{x}((t_{1},t_{2})\times\mathbb{R}^{n})}+R(\widetilde{u}_{*},\widetilde{z}_{*})\\
    &\lesssim\Vert(1-t)^{\frac{n\sigma-2}{2\sigma}}\Vert_{L^{\gamma}_{t}((t_{1},t_{2}))}\sup_{t_{1}<t<t_{2}}\Vert\widetilde{u}_{*}\Vert_{L^{2\sigma+2}_{x}}\Vert\widetilde{u}_{*}\Vert_{L^{q_{2}}_{t}W^{1,p_{2}}_{x}((t_{1},t_{2})\times\mathbb{R}^{n})}+R(\widetilde{u}_{*},\widetilde{z}_{*}),
\end{align*}
where $(p_{1},q_{1})$, $(p_{2},q_{2})$, and $(2\sigma+2,\gamma)$ are taken as above. The remaining terms $R(\widetilde{u}_{*},\widetilde{z}_{*})$ can be dealt with in the same way. Therefore, the resulting upper bound tends to $0$ as $t_{1},t_{2}\to 1^-$. In the case $\frac{4}{n} \leq 2\sigma<\frac{4}{n-2}$ if $n\geq 3$, or $\frac{4}{n}\leq 2\sigma<\infty$ if $n=1,2$, implementing the same arguments as in the proof of Theorem \ref{thm:global_Strichartz}, the same can be demonstrated, with the additional small data assumption \eqref{eq:bound_of_data_for_induction} in the $2\sigma = \frac{4}{n}$, $n=1,2$ case (we note that all of the above estimates hold almost surely in probability).

As a result, the above discussion yields that $\{S(-t)\widetilde{u}_{*}(t)\}$ is almost surely a Cauchy sequence in $\Sigma$; indeed, the sequence has an almost sure strong limit in the pseudo-conformal space. Let us denote this strong limit as $S(-1)\widetilde{u}_{+}$:
\begin{equation*}
    \lim_{t\to 1^-}S(-t)\widetilde{u}_{*}(t)=S(-1)\widetilde{u}_{+},\quad a.s.
\end{equation*}
By Proposition 7.5.1 from \cite{Cazenave_book}, we then have
\begin{equation*}
    \lim_{s\to\infty}S(-s)u_{*}(s)=M_{-1}S(-1)\widetilde{u}_{+}\quad\text{a.s. in $\Sigma$}.
\end{equation*}
This proves the almost sure scattering in $\Sigma$ when $\sigma(n)<2\sigma<\frac{4}{n-2}$ if $n\geq 3$ or $\sigma(n)<2\sigma<\infty$ if $n=1,2$, with the additional small data assumption \eqref{eq:bound_of_data_for_induction} in the $2\sigma = \frac{4}{n}$, $n=1,2$ case. 

\section{Scattering in $H^1_x$: Proof of Theorem \ref{thm:result_H^1_scattering}}\label{section:scattering_H^1}

In this section, we aim to establish scattering concerning the mass-critical and inter-critical nonlinearities for $H^1_x$ initial data. We first intend to obtain a uniform-in-time bound for the solution $u_{*}$ to \eqref{eq:additive_random_eqn}:
\begin{equation}\label{eq:uniform_bound_of_rand_soln}
    \sup_{T\leq t<\infty}\Vert u_{*}(t)\Vert_{H^{1}_{x}}<\infty,\quad a.s.\,\,\text{for}\,\,0<T<\infty.
\end{equation}
To this end, recall that $u_{*}(t)=u(t)-z_{*}(t)$ and all the moments of the mass and Hamiltonian of the solution $u$ to \eqref{eq:additive_SNLS} are bounded uniformly-in-time. Using \eqref{eq:almost_sure_time_decay_of_tail}, we have (for $0<T<\infty$)
\begin{align*}
    \sup_{T\leq t<\infty}\Vert u_{*}(t)\Vert_{H^{1}_{x}}&\leq\sup_{T\leq t<\infty}\Vert u(t)\Vert_{H^{1}_{x}}+\sup_{T\leq t<\infty}\Vert z_{*}(t)\Vert_{H^{1}_{x}}\\
    &\lesssim 1+\sup_{T\leq t<\infty}\langle t\rangle^{-\alpha+\frac{1}{2}}<\infty\,\,\,\text{a.s. for}\,\,\alpha\geq\frac{1}{2},
\end{align*}
which implies almost sure boundedness of $\sup_{0\leq t<\infty}\Vert u_{*}(t)\Vert_{H^{1}_{x}}$ as well. Note that the scattering in $H^{1}_{x}$ is to be shown via a different approach than that of $L^{2}_{x}$ or $\Sigma$ (recall that for $L^{2}_{x}$ and $\Sigma$ we exploited the pseudo-conformal transformation and the equivalence of the asymptotics between the original solution and its pseudo-conformal transform in Section \ref{section:scattering_L^2} and \ref{section:scattering_Sigma}), for which the global-in-time Strichartz estimates were sufficient to obtain the desired results. In the rest, we aim to observe the same asymptotic behavior between the solution $u_{*}$ to the random equation \eqref{eq:additive_random_eqn} and the solution $y$ to the deterministic NLS \eqref{eq:deterministic_NLS}, assuming that $u_{*}(T)=y(T),\, a.s.$, for some $T>0$. More precisely, we begin by observing the following:
\begin{proposition}\label{lemma_deterministic_scattering}
    Let $\frac{4}{n}\leq 2\sigma<\frac{4}{n-2}$ if $n\geq 3$, and $\frac{4}{n}\leq 2\sigma<\infty$ if $n=1,2$. Then if $u_{*}(T)=y(T)\in H^{1}_{x}(\mathbb{R}^{n})$ for some $T>0$, there exists a unique global $H^{1}_{x}$ solution $y$, depending on T, to the deterministic NLS \eqref{eq:deterministic_NLS} such that $y$ scatters at infinity in $H^{1}_{x}$ and for any Strichartz pair $(p,q)$, we have
    \begin{equation}\label{eq:global_Strichartz_y}
        \Vert y\Vert_{L^{q}_{t}W^{1,p}_{x}([0,\infty)\times\mathbb{R}^{n})}<\infty.
    \end{equation}
\end{proposition}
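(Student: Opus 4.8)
The plan is to read this as a purely deterministic assertion about the defocusing, energy-subcritical NLS \eqref{eq:deterministic_NLS}; the defocusing sign is the one consistent with the positive potential term and the coefficient $\frac{4}{\sigma+1}$ appearing in the pseudo-conformal energy \eqref{eq:pseudo-conf_energy}. Since the data $y(T)=u_*(T)\in H^1_x$ and the flow is time-reversible, it suffices to construct the solution forward from $T$ and to extend it backward over the compact interval $[0,T]$ by local theory; the global bound \eqref{eq:global_Strichartz_y} over $[0,\infty)$ then reduces to a forward bound over $[T,\infty)$. I would first record global well-posedness: the local Cauchy theory in $H^1_x$, built from the Strichartz estimates of Lemma \ref{stichartzlemma} via a contraction mapping, produces a unique solution on an interval whose length depends only on $\Vert y(T)\Vert_{H^1_x}$, while conservation of mass and energy in the defocusing case bounds $\Vert y(t)\Vert_{H^1_x}$ uniformly in time; iterating the local theory then yields the unique global $H^1_x$ solution. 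The crux is the global spacetime bound \eqref{eq:global_Strichartz_y}, after which scattering is immediate.

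To produce a single global a priori spacetime bound I would split into the two regimes. In the inter-critical range $\frac{4}{n}<2\sigma<\frac{4}{n-2}$ the (interaction) Morawetz inequality provides finiteness of $\Vert y\Vert_{L^{q_0}_tL^{r_0}_x([0,\infty)\times\mathbb{R}^n)}$ for a suitably chosen admissible pair $(r_0,q_0)$, controlled by $\Vert y(T)\Vert_{H^1_x}$, exactly as in the asymptotic-completeness arguments of Ginibre--Velo \cite{Ginibre_Scattering_H1} for $n\geq 3$ and Nakanishi \cite{Nakanishi_scattering_H1} for $n=1,2$ (compare \cite{Colliander2002}). At the mass-critical endpoint $2\sigma=\frac{4}{n}$ no such elementary estimate is available, and I would instead invoke directly the global well-posedness and scattering theorem for the defocusing mass-critical equation due to Dodson \cite{Dodson_dimension_1,Dodson_dimension_2,Dodson_dimension_3}, which supplies the global bound $\Vert y\Vert_{L^{2(n+2)/n}_{t,x}([0,\infty))}<\infty$.

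Given one such global bound, I would upgrade it to the full family \eqref{eq:global_Strichartz_y} by the standard finite-subdivision argument: partition $[0,\infty)=\bigcup_k I_k$ into finitely many intervals on each of which the controlling norm of $y$ is below a small threshold $\varepsilon$, apply the inhomogeneous Strichartz estimate of Lemma \ref{stichartzlemma} together with the Leibniz rule, the fractional chain rule and Hölder on each $I_k$ to obtain $\Vert y\Vert_{L^q_tW^{1,p}_x(I_k)}\lesssim\Vert y(t_k)\Vert_{H^1_x}+C\varepsilon^{2\sigma}\Vert y\Vert_{L^{q_1}_tW^{1,p_1}_x(I_k)}$ for a fixed reference pair $(p_1,q_1)$, absorb the last term when $\varepsilon$ is small, and sum over the finitely many $k$. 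Scattering then follows in the usual manner: setting $y_+:=S(-T)y(T)+i\int_T^\infty S(-s)|y|^{2\sigma}y\,ds$, the dual Strichartz estimate of Lemma \ref{stichartzlemma} combined with \eqref{eq:global_Strichartz_y} shows that the tail $-i\int_t^\infty S(-s)|y|^{2\sigma}y\,ds$ is Cauchy and tends to $0$ in $H^1_x$, so that $\Vert S(-t)y(t)-y_+\Vert_{H^1_x}\to 0$ as $t\to\infty$.

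The principal obstacle lies in the input global spacetime bound rather than in the bootstrapping or the scattering deduction, both of which are routine. In the inter-critical case this is the Morawetz estimate, where the careful point is the choice of the exponent pair $(r_0,q_0)$ whose associated norm genuinely controls the nonlinearity under the subdivision; in the mass-critical case the required bound is precisely the content of Dodson's deep theorem, so there the difficulty is entirely imported from the literature. A minor recurring technicality, present whenever $\sigma$ is not a half-integer, is the justification of the fractional chain rule used to distribute $\langle\nabla\rangle$ across $|y|^{2\sigma}y$, but this is by now standard.
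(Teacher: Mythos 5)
Your proposal is correct and follows essentially the same route as the paper: a Morawetz-type a priori spacetime bound in the inter-critical range (the paper uses the Lin--Strauss-type estimate $\Vert y\Vert_{L^{n+1}_{t}L^{2(n+1)/(n-1)}_{x}}\lesssim\Vert y\Vert_{L^{\infty}_{t}H^{1}_{x}}$ from Tao--Visan rather than the interaction Morawetz inequality, and note this norm is not an admissible Strichartz pair), Dodson's theorem at the mass-critical endpoint, a finite-subdivision bootstrap to obtain the full family of $L^{q}_{t}W^{1,p}_{x}$ bounds, and the Cauchy criterion for $S(-t)y(t)$ in $H^{1}_{x}$ to conclude scattering. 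The only differences are cosmetic choices of the controlling norm and citations, so no further comparison is needed.
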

\begin{Remark}
    The energy scattering for the mass-critical deterministic NLS  ($2\sigma=\frac{4}{n}$) was solved for any spatial dimensions by utilizing the global space-time estimates established in \cite{Dodson_dimension_3,Dodson_dimension_1,Dodson_dimension_2}. As for the inter-critical nonlinearities, the energy scattering for $n\geq 1$  was established in \cite{Ginibre_Scattering_H1,Nakanishi_scattering_H1}.
   
\end{Remark}
\noindent
We leave the proof of Proposition \ref{lemma_deterministic_scattering} to the appendix. For our purpose, it suffices to show that the difference $u_{*}-y$ belongs to all Strichartz spaces $L^{q}_{t}W^{1,p}_{x}([T,\infty)\times\mathbb{R}^{n})$, $a.s.$, where
\begin{align*}
   u_{*}(t)&=S(t)u_{*}(T)+i\int_{T}^{t}S(t-t')\vert u_{*}+z_{*}\vert^{2\sigma}(u_{*}+z_{*})(t')dt',\\
    y(t)&=S(t)y(T)+i\int_{T}^{t}S(t-t')\vert y\vert^{2\sigma}y(t')dt'.
\end{align*}
Thus, as $u_{*}(T)=y(T)$, we write
\begin{equation*}
    u_{*}(t)-y(t)=i\int_{T}^{t}S(t-t')(F(u_{*}+z_{*})-F(y))dt'
\end{equation*}
and from \eqref{eq:differ_rand_and_det_soln_IVP}
\begin{equation*}
    v(t)=i\int_{T}^{t}S(t-t')(F(v+y+z_{*})-F(y))dt',
\end{equation*}
where $F(u):=\vert u\vert^{2\sigma}u$. The notations and estimates for $F$ in the following are taken from \cite{Visan_I_Method}, so writing $F'(u)=(\partial_{u}F,\partial_{\overline{u}}F)$ and $\nabla u=(\nabla u,\nabla\overline{u})$, we have $\nabla F(u)=\nabla F(u,\overline{u})=\nabla u\cdot F'(u)$ and
\begin{align*}
    \vert F(u)-F(v)\vert&\lesssim (\vert u\vert^{2\sigma}+\vert v\vert^{2\sigma})\vert u-v\vert,\\
    \vert F'(u)\vert &\lesssim\vert u\vert^{2\sigma},\\
    \vert F'(u)-F'(v)\vert &\lesssim\vert u-v\vert^{\min(1,2\sigma)}\left(\vert u\vert+\vert v\vert\right)^{2\sigma-\min(1,2\sigma)}.
    \end{align*}
Therefore, combining these, we arrive at
\begin{align*}
    \vert\nabla\left(F(u)-F(v)\right)\vert &=\vert\nabla u\cdot F'(u)-\nabla v\cdot F'(v)\vert\\
    &=\vert\nabla u\cdot F'(u)-\nabla v\cdot F'(u)+\nabla v\cdot F'(u)-\nabla v\cdot F'(v)\vert\\
    &\leq\vert\nabla(u-v)\vert\vert F'(u)\vert+\vert\nabla v\vert\vert F'(u)-F'(v)\vert\\
    &\lesssim\vert u\vert^{2\sigma}\vert\nabla(u-v)\vert+\vert\nabla v\vert(\vert u\vert+\vert v\vert)^{2\sigma-\min(1,2\sigma)}\vert u-v\vert^{\min(1,2\sigma)}.
\end{align*}
Using the admissible pair $(2+\frac{4}{n},2+\frac{4}{n})$, we obtain
\begin{align*}
    \Vert \vert u\vert^{2\sigma}v\Vert_{L^{\frac{2n+4}{n+4}}_{t,x}}&\leq\Vert u^{2\sigma\theta}\Vert_{L^{\frac{2n+4}{2n\sigma\theta}}_{t,x}}\Vert u^{2\sigma(1-\theta)}\Vert_{L^{p}_{t,x}}\Vert v\Vert_{L^{2+\frac{4}{n}}_{t,x}}\\
    &\leq \Vert u\Vert_{L^{2+\frac{4}{n}}_{t,x}}^{2\sigma\theta}\Vert u\Vert_{L^{2\sigma(1-\theta)p}_{t,x}}^{2\sigma(1-\theta)}\Vert v\Vert_{L^{2+\frac{4}{n}}_{t,x}},
\end{align*}
where
\begin{equation*}
    \frac{1}{p}+\frac{2n\sigma\theta}{2n+4}+\frac{n}{2n+4}=\frac{n+4}{2n+4}\implies \frac{1}{p}=\frac{2-n\sigma\theta}{n+2}.
\end{equation*}
Note that
\begin{align*}
    \frac{2}{2\sigma(1-\theta)p}=\frac{2-n\sigma\theta}{\sigma(1-\theta)(n+2)}=n\left(\frac{1}{2}-\frac{1}{r}\right)\implies \frac{1}{r}=\frac{n\sigma(1-\theta)(n+2)-2(2-n\sigma\theta)}{2n\sigma(1-\theta)(n+2)}
\end{align*}
and the pair $(r,2\sigma(1-\theta)p)$ is admissible. Observe that
\begin{align*}
    \frac{1}{r}-\frac{1}{2\sigma(1-\theta)p}&=\frac{n\sigma(1-\theta)(n+2)-2(2-n\sigma\theta)}{2n\sigma(1-\theta)(n+2)}-\frac{2-n\sigma\theta}{2\sigma(1-\theta)(n+2)}\\
    &=\frac{n\sigma-2}{2n\sigma(1-\theta)}\geq 0,
\end{align*}
provided that $2\sigma\geq \frac{4}{n}$, which holds true in our case. Moreover, by applying a Sobolev embedding $W^{1,r}_{x}\hookrightarrow L^{2\sigma(1-\theta)p}_{x}$ with the relation
\begin{equation}\label{thelastineq1/r}
    \frac{1}{r}\leq\frac{1}{2\sigma(1-\theta)p}+\frac{1}{n},
\end{equation}
we may bound the middle term, $L^{2\sigma(1-\theta)p}_{t,x}$-norm of $u$, by a Strichartz norm. The inequality \eqref{thelastineq1/r} can be rewritten as follows
\begin{align*}
    \frac{1}{r}\leq\frac{1}{2\sigma(1-\theta)p}+\frac{1}{n}\iff\frac{(n-2)\sigma-2+2\sigma\theta}{2n\sigma(1-\theta)}\leq 0\iff 2\sigma\leq\frac{4}{n-2+2\theta}.
\end{align*}
So, we may achieve any inter-critical nonlinearity by a suitable choice of $0<\theta<1$, but when $n=1,2$ we should be more careful:
\begin{itemize}
    \item If $n=2$, then since $\frac{2}{n}=1<\sigma$, we require $$2\sigma\theta-2\leq 0\Rightarrow \theta\leq\frac{1}{\sigma}<1.$$
    \item If $n=1$, since $\sigma>2$, we require $$(2\theta-1)\sigma\leq 2\Rightarrow\theta\leq\frac{1}{2}+\frac{1}{\sigma}<1.$$
\end{itemize}
As a result, applying the Sobolev embedding mentioned above, we obtain the following estimate:
\begin{align*}
    \Vert \vert u\vert^{2\sigma}v\Vert_{L^{\frac{2n+4}{n+4}}_{t,x}}&\leq \Vert u\Vert_{L^{2+\frac{4}{n}}_{t,x}}^{2\sigma\theta}\Vert u\Vert_{L^{2\sigma(1-\theta)p}_{t,x}}^{2\sigma(1-\theta)}\Vert v\Vert_{L^{2+\frac{4}{n}}_{t,x}}\\
    &\lesssim\Vert u\Vert_{L^{2+\frac{4}{n}}_{t,x}}^{2\sigma\theta}\Vert u\Vert_{L^{2\sigma(1-\theta)p}_{t}W^{1,r}_{x}}^{2\sigma(1-\theta)}\Vert v\Vert_{L^{2+\frac{4}{n}}_{t,x}}.
\end{align*}
We now introduce the space $$X^{1}_{n,\sigma,\theta}([T,\infty))=X^{1}([T,\infty)):=\left(L^{2+\frac{4}{n}}_{t}W^{1,2+\frac{4}{n}}_{x}\cap L^{2\sigma(1-\theta)p}_{t}W^{1,r}_{x}\right)([T,\infty)).$$
Let $(\alpha,\beta)$ be either $(2+\frac{4}{n},2+\frac{4}{n})$ or $(r,2\sigma(1-\theta)p)$. There are two cases to consider: $2\sigma \leq 1$ or $2\sigma > 1$.

\noindent
\textbf{Case 1.} Let $2\sigma\leq1$. Then we have
\begin{align*}
    \Vert v\Vert_{L^{\beta}_{t}W^{1,\alpha}_{x}([T,\infty))}&\lesssim\Vert (\vert v+y+z_{*}\vert^{2\sigma}+\vert y\vert^{2\sigma})\vert v+z_{*}\vert\Vert_{L^{\frac{2n+4}{n+4}}_{t,x}([T,\infty))}+\Vert\vert v+y+z_{*}\vert^{2\sigma}\vert\nabla(v+z_{*})\vert\Vert_{L^{\frac{2n+4}{n+4}}_{t,x}([T,\infty))}\\
    &\quad+\Vert \vert\nabla y\vert\vert v+z_{*}\vert^{2\sigma}\Vert_{L^{\frac{2n+4}{n+4}}_{t,x}([T,\infty))}\\
    &\lesssim\left(\Vert v\Vert_{L^{2+\frac{4}{n}}_{t,x}}^{2\sigma\theta}\Vert v\Vert_{L^{2\sigma(1-\theta)p}_{t}W^{1,r}_{x}}^{2\sigma(1-\theta)}+\Vert y\Vert_{L^{2+\frac{4}{n}}_{t,x}}^{2\sigma\theta}\Vert y\Vert_{L^{2\sigma(1-\theta)p}_{t}W^{1,r}_{x}}^{2\sigma(1-\theta)}+\Vert z_{*}\Vert_{L^{2+\frac{4}{n}}_{t,x}}^{2\sigma\theta}\Vert z_{*}\Vert_{L^{2\sigma(1-\theta)p}_{t}W^{1,r}_{x}}^{2\sigma(1-\theta)}\right)\\
    &\quad\quad\times\Vert v+z_{*}\Vert_{L^{2+\frac{4}{n}}_{t}W^{1,2+\frac{4}{n}}_{x}}+\Vert y\Vert_{L^{2+\frac{4}{n}}_{t}W^{1,2+\frac{4}{n}}_{x}}\Vert v+z_{*}\Vert_{L^{2+\frac{4}{n}}_{t,x}}^{2\sigma\theta}\Vert v+z_{*}\Vert_{L^{2\sigma(1-\theta)p}_{t}W^{1,r}_{x}}^{2\sigma(1-\theta)}
\end{align*}
where in the second estimate we apply Hölder's inequality. Therefore, the $X^1$ norm of $v$ can be controlled as follows:
\begin{align}
    \Vert v\Vert_{X^{1}([T,\infty))}&\lesssim\Vert v\Vert_{X^{1}([T,\infty))}^{2\sigma+1}+\Vert v\Vert_{X^{1}([T,\infty))}^{2\sigma} \left( \Vert z_{*}\Vert_{X^{1}([T,\infty))} + \Vert y\Vert_{X^{1}([T,\infty))} \right)\nonumber \\
    &\quad+\Vert v\Vert_{X^{1}([T,\infty))}(\Vert z_{*}\Vert_{X^{1}([T,\infty))}^{2\sigma}+\Vert y\Vert_{X^{1}([T,\infty))}^{2\sigma}) +C(\Vert z_{*}\Vert_{X^{1}([T,\infty))},\Vert y\Vert_{X^{1}([T,\infty))})\nonumber \\
    &\lesssim\Vert v\Vert_{X^{1}([T,\infty))}^{2\sigma+1}+ C(\Vert z_{*}\Vert_{X^{1}([T,\infty))},\Vert y\Vert_{X^{1}([T,\infty))})(\Vert v\Vert_{X^{1}([T,\infty))}^{2\sigma}+\Vert v\Vert_{X^{1}([T,\infty))})\nonumber \\
    &\quad+ C(\Vert z_{*}\Vert_{X^{1}([T,\infty))},\Vert y\Vert_{X^{1}([T,\infty))})\nonumber \\
    \label{eq:base_for_induction_for_v}&\lesssim C(\Vert z_{*}\Vert_{X^{1}([T,\infty))},\Vert y\Vert_{X^{1}([T,\infty))})+\left(1+C(\Vert z_{*}\Vert_{X^{1}([T,\infty))},\Vert y\Vert_{X^{1}([T,\infty))})\right)\Vert v\Vert_{X^{1}([T,\infty))}^{2\sigma+1}.
\end{align}
 Note that replacing $\Vert v\Vert_{X^{1}([T,\infty))}$ with $\Vert v\Vert_{C_{t}H^{1}_{x}}$ above, the estimate \eqref{eq:base_for_induction_for_v} still remains valid. Note also that in arriving at \eqref{eq:base_for_induction_for_v}, we have used the fact of boundedness of $X^{1}$-norm of $v$ with lower exponents by the constant depending on $y$ and $z_{*}$ whenever $\Vert v\Vert_{X^{1}([T,\infty))}\ll 1$. Otherwise, the lower order terms involving $v$ need to be controlled by $\left(1+C(\Vert z_{*}\Vert_{X^{1}([T,\infty))},\Vert y\Vert_{X^{1}([T,\infty))})\right)\Vert v\Vert_{X^{1}([T,\infty))}^{2\sigma+1}$. Recall that $y$ and $z_{*}$ obey the global-in-time Strichartz estimates via \eqref{eq:global_Strichartz_y} and \eqref{eq:global_Strichartz_est_for_tail} implying that
\begin{equation*}
    \Vert y\Vert_{X^{1}([T,\infty))},\Vert z_{*}\Vert_{X^{1}([T,\infty))}<\infty
\end{equation*}
uniformly in time. For sufficiently large $T$ we can find $\varepsilon(T)>0$ so that
\begin{equation*}
     \Vert y\Vert_{X^{1}([T,\infty))},\Vert z_{*}\Vert_{X^{1}([T,\infty))}\lesssim\varepsilon(T)
\end{equation*}
where $\varepsilon(T)\to 0$ as $T\to\infty$. Note that we have
\begin{equation*}
    \Vert v\Vert_{C_{t}H^{1}_{x}([T,\infty))}+\Vert v\Vert_{X^{1}([T,\infty))}\leq C(\varepsilon(T))+(C+C(\varepsilon(T)))\Vert v\Vert_{X^{1}([T,\infty))}^{2\sigma+1}
\end{equation*}
for some constant $C>0$. Applying Lemma \ref{lemma:uniform_bounded_by_its_higher_powers} with $a=C(\varepsilon(T))$, $b=(C+C(\varepsilon(T)))$, $\alpha=2\sigma+1$ and the condition
\begin{equation*}
    C(\varepsilon(T))<\left(\frac{2\sigma}{2\sigma+1}\right)\left((2\sigma+1)(C+C(\varepsilon(T)))\right)^{-\frac{1}{2\sigma}},
\end{equation*}
 we arrive at
\begin{equation*}
     \Vert v\Vert_{C_{t}H^{1}_{x}([T,\infty))}+\Vert v\Vert_{X^{1}([T,\infty))}\lesssim \frac{2\sigma+1}{2\sigma}C(\varepsilon(T)).
\end{equation*}
The same procedure also shows that $\Vert v\Vert_{S^{1}([T,\infty))}<\infty$ a.s. (where $S^{1}$ is defined as in \eqref{S^1space}).

\noindent
\textbf{Case 2.} Let $2\sigma>1$. This case can be handled with the same arguments as in the previous case yet with minor differences appeared in selection of admissible pairs while bounding the difference of Duhamel terms. It remains to show that $\{S(-t)u_{*}(t):t\in[T,\infty)\}$ is Cauchy in $H^{1}_{x}$. Let $T<t_{1}<t_{2}<\infty$. Then
\begin{align*}
    \Vert S(-t_{2})u_{*}(t_{2})-S(-t_{1})u_{*}(t_{1})\Vert_{H^{1}_{x}}&\leq\Vert S(-t_{2})(u_{*}(t_{2})-y(t_{2}))\Vert_{H^{1}_{x}}+\Vert S(-t_{2})y(t_{2})-S(-t_{1})y(t_{1})\Vert_{H^{1}_{x}}\\
    &+\Vert S(-t_{1})(u_{*}(t_{1})-y(t_{1}))\Vert_{H^{1}_{x}}\\
    &\lesssim\sup_{t\in[T,\infty)}\Vert v(t)\Vert_{H^{1}_{x}}+ \Vert S(-t_{2})y(t_{2})-S(-t_{1})y(t_{1})\Vert_{H^{1}_{x}}\\
    &\lesssim C(\varepsilon(T))+\Vert S(-t_{2})y(t_{2})-S(-t_{1})y(t_{1})\Vert_{H^{1}_{x}}\to 0\quad \text{as $T\to\infty$,}
\end{align*}
as $y$ scatters in $H^{1}_{x}$ and $v(t)\to 0$ in $H^{1}_{x}$ as $t\to\infty$. This shows that $u_{*}(t)$ almost surely converges to the same strong limit of $y(t)$ in $H^{1}_{x}$ as $t\to\infty$, i.e.,
\begin{equation*}
    \lim_{t\to\infty}\Vert S(-t)u_{*}(t)-y_{+}\Vert_{H^{1}_{x}}=0,\quad a.s.
\end{equation*}

\section{Appendix}

\begin{proof}[Proof of Proposition \ref{lemma_deterministic_scattering}]
    We will divide the proof into two: the mass critical and the mass supercritical cases. Throughout the proof, the spacetime norms $L^{q}_{t}L^{p}_{x}$ are taken on the set $[0,\infty)\times\mathbb{R}^{n}$ unless otherwise specified. We will only provide the proof for $n\geq 3$ to avoid any technicalities stemming from deterministic NLS problems. When $n=1,2$, see \cite{Dodson_dimension_1, Dodson_dimension_2, Nakanishi_scattering_H1} for the energy scattering.

    \noindent
    \textbf{Case 1.} Let $\frac{4}{n}<2\sigma<\frac{4}{n-2}$, $T>0$ be fixed, and $u_{*}(T)=y(T)$. Using the pre-established results in the deterministic NLS problems, by taking the initial data as $y(T)$, the global well-posedness can be obtained for the time interval $[T,\infty)$. Moreover, using continuity-in-time and time reversal, the solution can be shown to exist uniquely for $t\in[0,\infty)$. In the forthcoming discussion, the conservation of the mass and Hamiltonian will imply that
    \begin{equation}\label{eq:global_Str_det_NLS}
        \Vert y\Vert_{L^{q}_{t}W^{1,p}_{x}}\leq C(M(y(0)),H(y(0)))= C(M(u_*(T)),H(u_*(T)))\leq C(\Vert u_{*}(T)\Vert_{H^{1}_{x}}),
    \end{equation}
    provided that the first inequality holds. As $H^{1}_{x}$-norm of $u_{*}(t)$ remains bounded uniformly in time, the upper bound above for the Strichartz norm of $y(t)$ will be independent of $T$. Our intention is to focus on the first inequality in \eqref{eq:global_Str_det_NLS} so as to conclude the $H^{1}_{x}$ scattering of the deterministic NLS. Note that we adopt the arguments in \cite{Tao_Visan_Sigma} in relation to the upcoming space-time estimates. In this connection, we commence with the following inequality derived from the Morawetz estimates, see \cite{Tao_Visan_Sigma}:
    
    \noindent
    Let $I$ be a compact time interval and $y$ be a solution to \eqref{eq:deterministic_NLS} on $I\times\mathbb{R}^{n}$ with $n\geq 3$. Then
\begin{equation}\label{eq:Morawetz_control}
        \Vert y\Vert_{L^{n+1}_{t}L^{\frac{2(n+1)}{n-1}}_{x}(I\times\mathbb{R}^{n})}\lesssim\Vert y\Vert_{L^{\infty}_{t}H^{1}_{x}(I\times\mathbb{R}^{n})}.
    \end{equation}
    As the solution $y$ is global, using this fact and the bound \eqref{eq:Morawetz_control} together with conservation of the mass and Hamiltonian, we obtain
    \begin{equation*}
        \Vert y\Vert_{L^{n+1}_{t}L^{\frac{2(n+1)}{n-1}}_{x}([0,\infty)\times\mathbb{R}^{n})}\leq C(M(y(0)),H(u(0)))\leq C(\Vert u_{*}(T)\Vert_{H^{1}_{x}}).
    \end{equation*}
    Let $\varepsilon>0$ be a small constant to be determined so that, decomposing $[0,\infty)$ into $K=K(\Vert u_{*}(T)\Vert_{H^{1}_{x}},\varepsilon)$  subintervals $I_{j}=[t_{j},t_{j+1}]$, the following inequality holds true
    \begin{equation*}
        \Vert y\Vert_{L^{n+1}_{t}L^{\frac{2(n+1)}{n-1}}_{x}(I_{j}\times\mathbb{R}^{n})}\lesssim\varepsilon.
    \end{equation*}
   Next define the useful norm: \begin{align}\label{S^1space}
     \Vert y\Vert_{S^{1}(I\times\mathbb{R}^{n})}:=\sup_{(p,q)-admissible}\Vert y\Vert_{L^{q}_{t}W^{1,p}_{x}(I\times\mathbb{R}^{n})}.  
   \end{align}
    Therefore, using the Strichartz estimates and the above series of inequalities we obtain
    \begin{align*}
        \Vert y\Vert_{S^{1}(I_{j}\times\mathbb{R}^{n})}&=\left\Vert S(t)y(t_{j})+i\int_{t_{j}}^{t}S(t-s)\vert y\vert^{2\sigma}y(s)ds\right\Vert_{S^{1}(I_{j}\times\mathbb{R}^{n})}\\
        &\lesssim\Vert y(t_{j})\Vert_{H^{1}_{x}}+\Vert \vert y\vert^{2\sigma}y\Vert_{L^{2}_{t}W^{1,\frac{2n}{n+2}}_{x}(I_{j}\times\mathbb{R}^{n})}\\
        &\lesssim\Vert y(t_{j})\Vert_{H^{1}_{x}}+\Vert y\Vert_{L_{t}^{2+\frac{1}{\theta}}W^{1,\frac{2n(2\theta+1)}{n(2\theta+1)-4\theta}}_{x}}\Vert y\Vert_{L^{n+1}_{t}L^{\frac{2(n+1)}{n-1}}_{x}}^{\frac{n+1}{2(2\theta+1)}}\Vert y\Vert_{L^{\infty}_{t}L^{2}_{x}}^{\alpha(\theta)}\Vert y\Vert_{L^{\infty}_{t}L^{\frac{2n}{n-2}}_{x}}^{\beta(\theta)}\\
        &\lesssim\Vert y(t_{j})\Vert_{H^{1}_{x}}+\varepsilon^{\frac{n+1}{2(2\theta+1)}}\Vert y\Vert_{L^{\infty}_{t}H^{1}_{x}}^{\alpha(\theta)+\beta(\theta)}\Vert y\Vert_{S^{1}(I_{j}\times\mathbb{R}^{n})},
    \end{align*}
    for some sufficiently large $\theta>0$ with
    \begin{equation*}
        \alpha(\theta)=2\sigma\left(1-\frac{n}{2}\right)+\frac{8\theta+1}{2(2\theta+1)}\quad\text{and}\quad \beta(\theta)=\frac{n}{2}\left(2\sigma-\frac{n+8\theta+2}{n(2\theta+1)}\right).
    \end{equation*}
    After taking $\varepsilon>0$ sufficiently small depending on $\Vert u_{*}(T)\Vert_{H^{1}_{x}}$, by the conservation of the mass and the Hamiltonian, we have
    \begin{equation*}
        \Vert y\Vert_{S^{1}(I_{j}\times\mathbb{R}^{n})}\lesssim\Vert y(t_{j})\Vert_{H^{1}_{x}}\leq C(M(y(0)),H(y(0)))\leq C(\Vert u_{*}(T)\Vert_{H^{1}_{x}}).
    \end{equation*}
    Eventually, summing over $I_{j}$'s, we arrive at
    \begin{equation*}
        \Vert y\Vert_{S^{1}([0,\infty)\times\mathbb{R}^{n})}\leq KC(\Vert u_{*}(T)\Vert_{H^{1}_{x}})<\infty.
    \end{equation*}
    This shows that $y$ indeed satisfies the global and uniformly in-time Strichartz estimates. Now consider the sequence $\{S(-t)y(t):t\in[0,\infty)\}$. We aim to show that it is Cauchy in $H^{1}_{x}$. Thus, let $T<t_{1}<t_{2}<\infty$ and consider
    \begin{equation*}
        S(-t_{2})y(t_{2})-S(-t_{1})y(t_{1})=i\int_{t_{1}}^{t_{2}}S(-t')\vert y\vert^{2\sigma}y(t')dt'.
    \end{equation*}
    Making use of \eqref{eq:Morawetz_control} and Strichartz estimates, we arrive at
    \begin{align*}
        \Vert S(-t_{2})y(t_{2})-S(-t_{1})y(t_{1})\Vert_{H^{1}_{x}}&=\left\Vert\int_{t_{1}}^{t_{2}}S(-t')\langle\nabla\rangle\vert y\vert^{2\sigma}y(t')dt'\right\Vert_{L^{2}_{x}}\\
        &\lesssim \Vert \vert y\vert^{2\sigma}y\Vert_{L^{2}_{t}W^{1,\frac{2n}{n+2}}_{x}((t_{1},t_{2})\times\mathbb{R}^{n})}\\
        &\lesssim \Vert y\Vert_{L_{t}^{2+\frac{1}{\theta}}W^{1,\frac{2n(2\theta+1)}{n(2\theta+1)-4\theta}}_{x}}\Vert y\Vert_{L^{n+1}_{t}L^{\frac{2(n+1)}{n-1}}_{x}}^{\frac{n+1}{2(2\theta+1)}}\Vert y\Vert_{L^{\infty}_{t}L^{2}_{x}}^{\alpha(\theta)}\Vert y\Vert_{L^{\infty}_{t}L^{\frac{2n}{n-2}}_{x}}^{\beta(\theta)}\\
        &\lesssim \Vert y\Vert_{L_{t}^{2+\frac{1}{\theta}}W^{1,\frac{2n(2\theta+1)}{n(2\theta+1)-4\theta}}_{x}}\Vert y\Vert_{S^{1}((t_{1},t_{2}))}^{\alpha(\theta)+\beta(\theta)+\frac{n+1}{2(2\theta+1)}}\\
        &=\Vert y\Vert_{L_{t}^{2+\frac{1}{\theta}}W^{1,\frac{2n(2\theta+1)}{n(2\theta+1)-4\theta}}_{x}}\Vert y\Vert_{S^{1}((t_{1},t_{2}))}^{2\sigma},
    \end{align*}
    where all the spacetime norms are taken on the set $(t_{1},t_{2})\times\mathbb{R}^{n}$. Notice that
    \begin{equation*}
        \alpha(\theta)+\beta(\theta)+\frac{n+1}{2(2\theta+1)}=2\sigma,
    \end{equation*}
    and $\left(\frac{2n(2\theta+1)}{n(2\theta+1)-4\theta},2+\frac{1}{\theta}\right)$ is a Strichartz admissible pair. As $S^{1}([0,\infty))$ norm of $y$ is finite, we conclude that
    \begin{equation*}
         \Vert S(-t_{2})y(t_{2})-S(-t_{1})y(t_{1})\Vert_{H^{1}_{x}}\to 0\quad\text{as $t_{1},t_{2}\to\infty$}
    \end{equation*}
which verifies the existence of a unique $y_{+}\in H^{1}_{x}(\mathbb{R}^{n})$ such that $S(-t)y(t)\to y_{+}$ in $H^{1}_{x}(\mathbb{R}^{n})$ as $t\to\infty$.

    \noindent
    \textbf{Case 2.} Let $2\sigma=\frac{4}{n}$, $T>0$ be fixed, and $u_{*}(T)=y(T)\in H^{1}_{x}\subset L^{2}_{x}$. Then by \cite{Dodson_dimension_3}, the deterministic NLS \eqref{eq:deterministic_NLS} has a unique global solution in $L^{2}_{x}$. The conservation of mass and Strichartz estimates then imply that
    \begin{equation*}
        \Vert y\Vert_{L^{2+\frac{4}{n}}_{t,x}([0,\infty)\times\mathbb{R}^{n})}\leq C(M(y(T)))=C(M(u_{*}(T)))<\infty,\quad a.s.,
    \end{equation*}
    and this bound does not depend on $T$ due to \eqref{eq:uniform_bound_of_rand_soln}. As before, let $\varepsilon>0$ be a constant so that dividing $[0,\infty)$ into $K=K(\Vert u_{*}(T)\Vert_{L^{2}_{x}},\varepsilon)$ subintervals $I_{j}=[t_{j},t_{j+1}]$ yields
    \begin{equation*}
        \Vert y\Vert_{L^{2+\frac{4}{n}}_{t,x}(I_{j}\times\mathbb{R}^{n})}\lesssim\varepsilon.
    \end{equation*}
    Note that since the data is taken from $H^{1}_{x}$ and we are in the energy sub-critical regime, the conservation of mass and Hamiltonian implies that
    \begin{equation*}
        \Vert y(t)\Vert_{H^{1}_{x}}^{2}\lesssim M(y(t))+H(y(t))=M(u_{*}(T))+H(u_{*}(T))\leq C(\Vert u_{*}(t)\Vert_{C_{t}H^{1}_{x}([0,\infty)\times\mathbb{R}^{n})})<\infty\quad a.s.
    \end{equation*}
  which leads to
    \begin{align*}
        \Vert y\Vert_{L^{2+\frac{4}{n}}_{t}W^{1,2+\frac{4}{n}}_{x}(I_{j}\times\mathbb{R}^{n})}&\lesssim\Vert y(t_{j})\Vert_{H^{1}_{x}}+\left\Vert\int_{t_{j}}^{t}S(t-t')\langle\nabla\rangle\vert y\vert^{\frac{4}{n}}y(t')dt'\right\Vert_{L^{2+\frac{4}{n}}_{t,x}}\\
        &\lesssim 1+\Vert\vert y\vert^{\frac{4}{n}}y\Vert_{L^{\frac{2n+4}{n+4}}_{t}W^{1,\frac{2n+4}{n+4}}_{x}(I_{j})}\\
        &\lesssim 1+\Vert y\Vert_{L^{2+\frac{4}{n}}_{t}W^{1,2+\frac{4}{n}}_{x}(I_{j})}\Vert y\Vert_{L^{2+\frac{4}{n}}_{t,x}(I_{j})}^{\frac{4}{n}}\\
        &\lesssim 1+\varepsilon^{\frac{4}{n}}\Vert y\Vert_{L^{2+\frac{4}{n}}_{t}W^{1,2+\frac{4}{n}}_{x}(I_{j})}.
    \end{align*}
    Next taking $\varepsilon>0$ sufficiently small, depending on $\Vert u_{*}(T)\Vert_{L^{2}_{x}}$ as in the inter-critical case, the above norm of $y$ can be controlled by $H^{1}_{x}$ norm of $y(t_{j})$, in addition, we know that this Sobolev norm of $y$ is uniformly bounded in time. Thus, on $[0,\infty)$, we have
    \begin{equation*}
        \Vert y\Vert_{L^{2+\frac{4}{n}}_{t}W^{1,2+\frac{4}{n}}_{x}([0,\infty)\times\mathbb{R}^{n})}\lesssim K(\Vert u_{*}(T)\Vert_{L^{2}_{x}}),
    \end{equation*}
    which is finite without any dependence on $T$. Moreover, similar arguments show that $y\in S^{1}([0,\infty))$. Finally  as before we let $T<t_{1}<t_{2}<\infty$ and see that
    \begin{align*}
        \Vert S(-t_{2})y(t_{2})-S(-t_{1})y(t_{1})\Vert_{H^{1}_{x}}&=\left\Vert\int_{t_{1}}^{t}S(-t')\langle\nabla\rangle\vert y\vert^{2\sigma}y(t')dt'\right\Vert_{L^{2}_{x}}\\
        &\lesssim\Vert\vert y\vert^{2\sigma}y\Vert_{L^{\frac{2n+4}{n+4}}_{t}W^{1,\frac{2n+4}{n+4}}_{x}((t_{1},t_{2})\times\mathbb{R}^{n})}\\
        &\lesssim\Vert y\Vert_{L^{2+\frac{4}{n}}_{t,x}((t_{1},t_{2})\times\mathbb{R}^{n})}^{\frac{4}{n}}\Vert y\Vert_{L^{2+\frac{4}{n}}_{t}W^{1,2+\frac{4}{n}}_{x}((t_{1},t_{2})\times\mathbb{R}^{n})}\to 0\quad \text{as $t_{1},t_{2}\to\infty$.}
    \end{align*}
\end{proof}

\section*{Acknowledgement}
The third author would like to thank his M.Sc advisor \"{U}mit I\c{s}lak and the fourth author would like to thank his Ph.D advisor T. Burak G\"{u}rel for many helpful suggestions and comments during the preparation of this manuscript.

\nocite{*}
\bibliographystyle{abbrv}
\bibliography{reference_SNLS.bib}

\end{document}